\documentclass{amsart}
\usepackage{amsmath,amsthm,amsfonts,amssymb,color,graphicx,appendix,ulem,comment,hyperref,mathrsfs,nth,pifont}

\setlength{\oddsidemargin}{0.5cm}
\setlength{\evensidemargin}{0.5cm}
\setlength{\topmargin}{0.25cm}
\setlength{\textheight}{21.5cm}
\setlength{\textwidth}{15.25cm}

\numberwithin{equation}{section}
\theoremstyle{plain}
\newtheorem{theorem}{\sc Theorem}[section]

\newtheorem{corollary}[theorem]{\sc Corollary}
\newtheorem{definition}[theorem]{\sc Definition}
\newtheorem{lemma}[theorem]{\sc Lemma}
\newtheorem{proposition}[theorem]{\sc Proposition}

\theoremstyle{remark}
\newtheorem{remark}[theorem]{\sc Remark}

\newcommand{\Ham}{\overline H}

\newcommand{\Con}{\operatorname{C}}
\newcommand{\UC}{\operatorname{UC}}

\newcommand{\Lip}{\operatorname{Lip}}
\newcommand{\Liploc}{\operatorname{Lip_{\mathrm{loc}}}}

\begin{document}

\title[Stochastic homogenization of a class of quasiconvex viscous HJ equations in 1D]{Stochastic homogenization of a class of quasiconvex viscous Hamilton-Jacobi equations in one space dimension}

\author[A.\ Yilmaz]{Atilla Yilmaz}
\address{Atilla Yilmaz, Department of Mathematics, Temple University, 1805 N.\ Broad Street, Philadelphia, PA 19122, USA}
\email{atilla.yilmaz@temple.edu}
\urladdr{http://math.temple.edu/$\sim$atilla/}

\date{October 4, 2020.}

\subjclass[2010]{35B27, 35F21, 60G10.} 
\keywords{Stochastic homogenization; viscous Hamilton-Jacobi equation; effective Hamiltonian; quasiconvexity; level-set convexity; viscosity solution; corrector; random potential; scaled hill condition.}

\begin{abstract}
	We prove homogenization for a class of viscous Hamilton-Jacobi equations in the stationary \& ergodic setting in one space dimension. Our assumptions include most notably the following:
	the Hamiltonian is of the form $G(p) + \beta V(x,\omega)$, the function $G$ is coercive and strictly quasiconvex, $\min G = 0$, $\beta>0$, the random potential $V$ takes values in $[0,1]$ with full support and it satisfies a hill condition that involves the diffusion coefficient. Our approach is based on showing that, for every direction outside of a bounded interval $(\theta_1(\beta),\theta_2(\beta))$, there is a unique sublinear corrector with certain properties. We obtain a formula for the effective Hamiltonian and deduce that it is coercive, identically equal to $\beta$ on $(\theta_1(\beta),\theta_2(\beta))$, and strictly monotone elsewhere.
\end{abstract}

\maketitle

\section{Introduction}\label{sec:intro}

Consider a Hamilton-Jacobi (HJ) equation of the form
\begin{equation}\label{eq:girishuzur}
\partial_tu^\epsilon = \epsilon\,\mathrm{tr}\left(A\left(\frac{x}{\epsilon},\omega\right)D_{xx}^2u^\epsilon\right) + H\left(D_xu^\epsilon,\frac{x}{\epsilon},\omega\right),\quad (t,x)\in(0,+\infty)\times\mathbb{R}^d,
\end{equation}
where $\omega$ is an element of a probability space $(\Omega,\mathcal{F},\mathbb{P})$, and $\epsilon>0$. Assume that the diffusion matrix $A(x,\omega)$ and the Hamiltonian $H(p,x,\omega)$ are stationary \& ergodic processes in $x$, the former is positive semidefinite (for all $x$) and the latter diverges (uniformly in $x$) as $|p|\to+\infty$. We refer to such HJ equations as inviscid if $A \equiv 0$ and viscous otherwise.

As $\epsilon\to0$, \eqref{eq:girishuzur} is said to homogenize to an inviscid HJ equation of the form
\begin{equation}\label{eq:firishuzur}
\partial_t\overline u = \Ham\left(D_x\overline u\right),\quad (t,x)\in(0,+\infty)\times\mathbb{R}^d,
\end{equation}
if, for any initial condition from a prescribed class, the unique viscosity solution of \eqref{eq:girishuzur} with that initial condition converges locally uniformly on $[0,+\infty)\times\mathbb{R}^d$ to the unique viscosity solution of \eqref{eq:firishuzur} with the same initial condition. The function $\Ham$ is called the effective Hamiltonian. 

\subsection{Brief overview of our results}\label{sub:overview}

In this paper, we study the homogenization of \eqref{eq:girishuzur} under the following additional assumptions: $d=1$, the diffusion coefficient (which replaces $A(x,\omega)$ and is denoted by $a(x,\omega)$) takes values in $(0,1]$, the Hamiltonian is separable, i.e., it is of the form
\begin{equation}\label{eq:septa}
	H(p,x,\omega) = G(p) + \beta V(x,\omega),
\end{equation}
$G$ is a nonnegative and strictly quasiconvex (a.k.a.\ level-set convex) function that vanishes at the origin, $V(\,\cdot\,,\omega)$ is a $[0,1]$-valued potential whose range includes $(0,1)$ for $\mathbb{P}$-a.e.\ $\omega$, and $\beta>0$. We also put various regularity conditions on $a(\,\cdot\,,\omega)$ and $V(\,\cdot\,,\omega)$, but we postpone such details to Section \ref{sec:results}. Last but not least, we impose what we call the scaled hill condition on the pair $(a,V)$ (see \eqref{eq:scaledhill}) which holds for wide and natural classes of examples, but fails (most notably) in the periodic case. See Appendix \ref{app:scaledhill} for details and references.

In the special case we described in the previous paragraph, we prove that, for $\mathbb{P}$-a.e.\ $\omega$, as $\epsilon\to0$, \eqref{eq:girishuzur} homogenizes to an inviscid HJ equation.
We establish this result first with linear initial data (see Theorem \ref{thm:homlin}) and then with uniformly continuous initial data (see Corollary \ref{cor:velinim}).
Moreover, we give a formula for the effective Hamiltonian and deduce the following:
$\Ham(\theta)$ is identically equal to $\beta$ on a bounded interval $(\theta_1(\beta),\theta_2(\beta))$ that contains $0$,
strictly decreasing on $(-\infty,\theta_1(\beta)]$,
strictly increasing on $[\theta_2(\beta),+\infty)$,
and divergent as $\theta\to\pm\infty$.

Our approach is based on correctors (see Subsection \ref{sub:literature} for a general and informal definition). We show that, for every $\theta\notin(\theta_1(\beta),\theta_2(\beta))$, there is a unique sublinear corrector in a certain class of functions (see Theorems \ref{thm:staticHJ}--\ref{thm:thetasOK} and Remark \ref{rem:nocor}) and the desired homogenization result with initial condition $x\mapsto \theta x$ follows. To cover $\theta\in(\theta_1(\beta),\theta_2(\beta))$ and obtain the flat piece of the graph of the effective Hamiltonian $\Ham$, we use the sublinear correctors for $\theta_1(\beta)$ and $\theta_2(\beta)$ in combination to construct subsolutions and supersolutions. This is where we rely on the scaled hill condition.

\subsection{Background and context}\label{sub:literature}

In the general setting of \eqref{eq:girishuzur}, given any $\theta\in\mathbb{R}^d$, if there exist $\lambda(\theta)\in\mathbb{R}$ and $F_\theta:\mathbb{R}^d\times\Omega\to\mathbb{R}$ such that 
\begin{equation}\label{eq:sakinhuzur}
\mathrm{tr}\left(A\left(x,\omega\right)D_{xx}^2F_\theta\right) + H\left(\theta + D_xF_\theta,x,\omega\right) = \lambda(\theta),\quad x\in\mathbb{R}^d,
\end{equation}
and $|F_\theta(x,\omega)| = o(|x|)$ as $|x|\to+\infty$ for $\mathbb{P}$-a.e.\ $\omega$, 
then $F_\theta$ is referred to as a sublinear corrector in the literature. The motivation behind this definition lies in the observation that
\[ u^\epsilon(t,x,\omega) = t\lambda(\theta) + \theta\cdot x +  \epsilon F_\theta\left(\frac{x}{\epsilon},\omega\right) \] 
gives a solution of \eqref{eq:girishuzur} and, for $\mathbb{P}$-a.e.\ $\omega$, as $\epsilon\to0$, it converges to $\overline u(t,x) = t\lambda(\theta) + \theta\cdot x$ which defines a solution of \eqref{eq:firishuzur} with $\Ham(\theta) = \lambda(\theta)$.

The first instances of sublinear correctors in the context of HJ equations were introduced in \cite{LPV87} when $d\ge1$, $A \equiv 0$ and $x = (x_1,\ldots,x_d)\mapsto H(p,x,\omega)$ is $1$-periodic in $x_i$ for each $i\in\{1,\ldots,d\}$.  The authors of that seminal paper used the compactness of the unit cube $[0,1]^d$ to prove that there is a periodic (and hence bounded) corrector for every $\theta\in\mathbb{R}^d$ and then provided some additional arguments to conclude that \eqref{eq:girishuzur} homogenizes to \eqref{eq:firishuzur} with $\Ham(\theta) = \lambda(\theta)$ as in the paragraph above. This result was subsequently adapted in \cite{E92} to the case where $d\ge1$, $A$ is positive definite and $(A,H)$ is periodic in the same way.

When $p\mapsto H(p,x,\omega)$ is convex, \eqref{eq:girishuzur} homogenizes for $\mathbb{P}$-a.e.\ $\omega$ without any periodicity assumption. Results of this form were first obtained in \cite{S99,RT00} for inviscid equations and then in \cite{LS05,KRV06} for their viscous counterparts.
The starting point of all four of the cited papers is a variational formula for the viscosity solutions of \eqref{eq:girishuzur} involving the convex conjugate of $p\mapsto H(p,x,\omega)$. The first three papers then apply the subadditive ergodic theorem to this variational formula whereas the fourth one uses ideas and techniques from the theory of large deviations (as outlined in \cite{K07}). In particular, none of them rely on the existence of sublinear correctors (although their connection to homogenization is given in \cite{RT00} as a separate result, cf.\ \cite{LS03}).

It is natural to ask if homogenization takes place under the weaker assumption of quasiconvexity, i.e., when the sublevel sets of $p\mapsto H(p,x,\omega)$ are convex. This question has been answered positively for inviscid equations in \cite{DS09} and \cite{AS13} when $d=1$ and $d\ge1$, respectively. The proof in \cite{DS09} involves correctors as well as approximate correctors which are solutions of \eqref{eq:sakinhuzur} when an error margin is introduced on the right-hand side of that equality, whereas the strategy in \cite{AS13} is to apply the subadditive ergodic theorem to certain solutions of \eqref{eq:sakinhuzur} when the condition $x\in\mathbb{R}^d$ there is replaced with $x\in\mathbb{R}^d\setminus\{y\}$ for any $y\in\mathbb{R}^d$, bypassing the existence of sublinear correctors.

To the best of our knowledge, outside of periodic settings, Theorem \ref{thm:homlin} and Corollary \ref{cor:velinim} are the first homogenization results for a class of viscous HJ equations with quasiconvex Hamiltonians that are not necessarily convex. The effective Hamiltonian that we provide in Theorem \ref{thm:homlin} has the same qualitative properties (recall the second paragraph of Subsection \ref{sub:overview}) as the effective Hamiltonian for the inviscid counterparts (covered by \cite{DS09,AS13}) of the equations we study.

If quasiconvexity is violated, then there is no general answer to the question of homogenization. Indeed, when $d\ge2$, there are positive results for certain classes of such HJ equations (see \cite{ATY15,CS17,AC18,QTY18,G19}) as well as negative results for others (see \cite{Z17,FeS17,FeFZ19+}). The counterexamples in the latter collection of papers involve Hamiltonians with saddle points, so they cannot be adapted to $d=1$. In fact, in one dimension, we expect \eqref{eq:girishuzur} to homogenize for $\mathbb{P}$-a.e.\ $\omega$ under only mild regularity and growth assumptions. This has already been proved in \cite{ATY16,G16} for inviscid equations. Moreover, in that case, if the original Hamiltonian is separable (as in \eqref{eq:septa}), then we have a detailed picture of the effective Hamiltonian (see \cite{Y20+}).

It has recently been shown that homogenization takes place at least for certain classes of viscous HJ equations in one dimension with Hamiltonians that are not quasiconvex but piecewise convex.
The first such result was given in \cite{DK17} which studies so-called pinned Hamiltonians, followed by \cite{YZ19,KYZ20,DK20+} which consider separable Hamiltonians that satisfy, in particular, hill and valley conditions that are closely related to our scaled hill condition (see Appendix \ref{app:scaledhill} for details). Since any continuous and coercive function on the real line is piecewise quasiconvex, we think that our results in this paper 
constitute a major step toward generalizing the aforementioned previous results and establishing homogenization for a wide class of viscous HJ equations in one dimension.

\section{Our results}\label{sec:results}

Throughout the paper, for any domain of the form $X = I\times J$ or $X = J$ where $I\subset[0,+\infty)$ and $J\subset\mathbb{R}$ are open intervals,
$\Con(X)$, $\UC(X)$, $\Lip(X)$ and $\Lip_{\mathrm{loc}}(X)$ stand for the space of continuous, uniformly continuous, Lipschitz continuous and locally Lipschitz continuous real-valued functions on $X$, respectively. 
Similarly, $\Con^k(X)$, $k\in\{1,2\}$, stand for the space of real-valued functions on $X$ with continuous derivatives of order $k$. These definitions extend to the closure of $X$ as usual.

Let $(\Omega,\mathcal{F},\mathbb{P})$ be a probability space equipped with a group of measure-preserving transformations $\tau_x:\Omega\to\Omega$, $x\in\mathbb{R}$, such that $(x,\omega)\mapsto \tau_x\omega$ is measurable. Assume that $\mathbb{P}$ is ergodic under this group of transformations, i.e.,
\[ \mathbb{P}(\cap_{x\in\mathbb{R}}\tau_xA) \in\{0,1\}\quad\text{for every}\ A\in\mathcal{F}. \]
Write $\mathbb{E}[\,\cdot\,]$ to denote expectation with respect to $\mathbb{P}$.

For every $\epsilon > 0$ and $\omega\in\Omega$, we consider the viscous HJ equation
\begin{equation}\label{eq:asilhuzur}
	\partial_tu^\epsilon = \epsilon a\left(\frac{x}{\epsilon},\omega\right)\partial_{xx}^2u^\epsilon + G(\partial_xu^\epsilon) + \beta V\left(\frac{x}{\epsilon},\omega\right),\quad (t,x)\in(0,+\infty)\times\mathbb{R},\tag{$\mathrm{HJ}_{\epsilon,\omega}$}
\end{equation}
under the following assumptions:
\begin{equation}\label{eq:Gcon}
\begin{aligned}
&\text{$G:\mathbb{R}\to [0,+\infty)$ is coercive, i.e.,}\ \lim_{p\to\pm\infty} G(p) = +\infty,\\
&\text{$G\in\Liploc(\mathbb{R})$;}\\
\end{aligned}
\end{equation}
\begin{equation}\label{eq:Gshape}
\begin{aligned}
	\text{$G(0) = 0$,}\ \  &\text{$G_1 := \left.G\right|_{(-\infty,0]}$ is strictly decreasing,}\\
	&\text{$G_2 := \left.G\right|_{[0,+\infty)}$ is strictly increasing;}
\end{aligned}
\end{equation}
\begin{equation}\label{eq:stat}
\begin{aligned}
	&\text{$a:\mathbb{R}\times\Omega\to(0,1]$ and $V:\mathbb{R}\times\Omega\to[0,1]$ are stationary, i.e.,}\\
	&\text{$a(x,\omega) = a(0,\tau_x\omega)$ and $V(x,\omega) = V(0,\tau_x\omega)$ for every $(x,\omega)\in\mathbb{R}\times\Omega$;}
\end{aligned}
\end{equation}
\begin{align}
	&\inf\{V(x,\omega):\,x\in\mathbb{R}\} = 0\ \text{and}\ \sup\{V(x,\omega):\,x\in\mathbb{R}\} = 1\ \text{for $\mathbb{P}$-a.e.\ $\omega$;}\label{eq:fruit}\\
	&\text{$a(\,\cdot\,,\omega)$ and $V(\,\cdot\,,\omega)$ are in $\Con(\mathbb{R})$ for every $\omega\in\Omega$;}\label{eq:aVcon1}
\end{align}
and $\beta > 0$  (which is fixed throughout the paper).

Two remarks are in order. First,
$G$ is strictly quasiconvex, i.e., 
\[ G(cp + (1-c)q) < \max\{G(p),G(q) \} \] whenever $p \ne q$ and $0 < c < 1$. (See Figure \ref{fig:G}.) Second, \eqref{eq:fruit} is essentially equivalent to
\[ \mathbb{P}( V(0,\omega) = h ) < 1\ \ \text{for every $h\in[0,1]$}. \]
This is due to ergodicity, the presence of the parameter $\beta$ and the observation that  adding a constant to the right-hand side of \eqref{eq:asilhuzur} corresponds to adding a linear (in time) term to $u^\epsilon$.

\begin{figure}
	\includegraphics{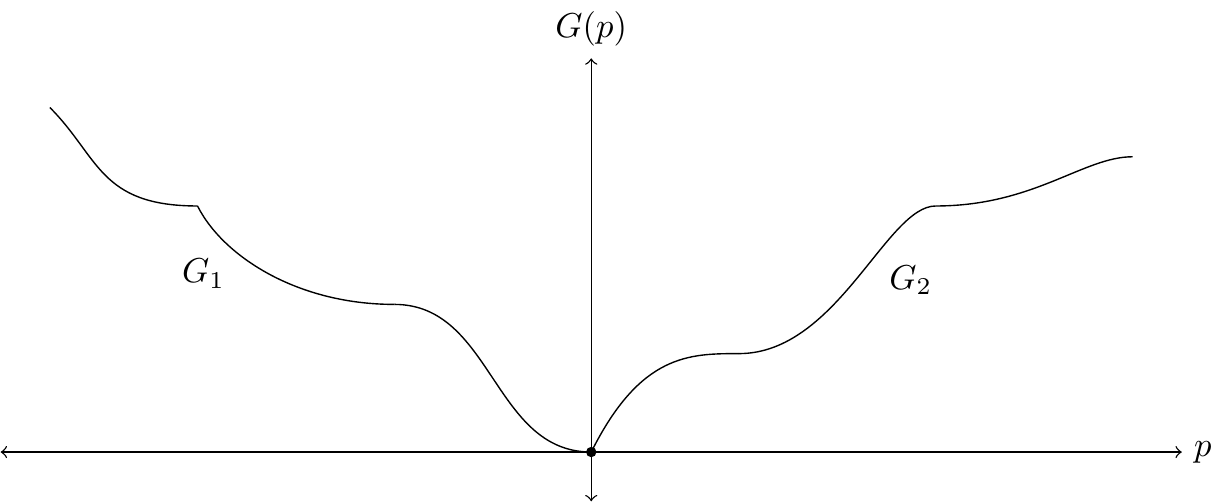}
	\caption{The graph of a function $G$ that satisfies \eqref{eq:Gcon}--\eqref{eq:Gshape}.}
	\label{fig:G}
\end{figure}

\subsection{The static HJ equation}

Our first couple of results are on the static (i.e., time-independent) version of \eqref{eq:asilhuzur} with $\epsilon = 1$. We prove them in Section \ref{sec:staticHJ}.

\begin{theorem}\label{thm:staticHJ}
	Assume \eqref{eq:Gcon}--\eqref{eq:aVcon1}. For every $\lambda\ge\beta$ and $\omega\in\Omega$, the static viscous HJ equation
	\begin{equation}\label{eq:palasvis}
		a(x,\omega)F'' + G(F') + \beta V(x,\omega) = \lambda,\quad x\in\mathbb{R},
	\end{equation} 
	has a unique solution $F_1^\lambda(\,\cdot\,,\omega)\in\Con^2(\mathbb{R})$ such that
	\[ F_1^\lambda(0,\omega) = 0 \quad\text{and}\quad (F_1^\lambda)'(x,\omega) \in[G_1^{-1}(\lambda),G_1^{-1}(\lambda - \beta)]\ \text{for all}\ x\in\mathbb{R}. \]
	Similarly, it has a unique solution $F_2^\lambda(\,\cdot\,,\omega)\in\Con^2(\mathbb{R})$ such that
	\begin{equation}\label{eq:adamvis2}
	F_2^\lambda(0,\omega) = 0 \quad\text{and}\quad (F_2^\lambda)'(x,\omega) \in[G_2^{-1}(\lambda - \beta),G_2^{-1}(\lambda)]\ \text{for all}\ x\in\mathbb{R}.
	\end{equation}
	Moreover, $(F_1^\lambda)'$ and $(F_2^\lambda)'$ are stationary, i.e.,
	\[ (F_i^\lambda)'(x,\omega) = (F_i^\lambda)'(0,\tau_x\omega) \]
	for every $i\in\{1,2\}$, $x\in\mathbb{R}$ and $\omega\in\Omega$.
\end{theorem}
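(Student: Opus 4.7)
The plan is to recast \eqref{eq:palasvis} as a first-order ODE for $P:=F'$, namely
\[ a(x,\omega)P'(x) \;=\; \lambda - G(P(x)) - \beta V(x,\omega), \]
and to produce a unique bounded solution whose range sits in the prescribed strip. The right-hand side, after dividing by $a$, is locally Lipschitz in $P$ (since $G\in\Liploc(\mathbb{R})$) and continuous in $x$ (since $a(\,\cdot\,,\omega),V(\,\cdot\,,\omega)\in\Con(\mathbb{R})$ and $a>0$), so standard ODE theory yields local existence and uniqueness. I focus on constructing $F_2^\lambda$; the case of $F_1^\lambda$ is symmetric after reversing orientation.

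First I would check that the strip $S_2:=[G_2^{-1}(\lambda-\beta),G_2^{-1}(\lambda)]$ is forward-invariant: at the upper endpoint $P=G_2^{-1}(\lambda)$ the right-hand side equals $-\beta V(x,\omega)\le 0$, and at the lower endpoint it equals $\beta(1-V(x,\omega))\ge 0$. For each $n\ge 1$, let $P_n(\,\cdot\,,\omega)$ be the solution on $[-n,+\infty)$ with $P_n(-n,\omega)=G_2^{-1}(\lambda)$, which by the invariance remains in $S_2$. Standard ODE comparison shows that $n\mapsto P_n(x,\omega)$ is non-increasing at each fixed $x$, and being bounded below by $G_2^{-1}(\lambda-\beta)$ it has a pointwise limit $P(x,\omega)$. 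On any compact interval $K$, continuity and positivity of $a(\,\cdot\,,\omega)$ provide a positive lower bound for $a$ on $K$, so $P_n'$ is uniformly bounded on $K$; Arzel\`a--Ascoli combined with the monotone pointwise convergence yields uniform convergence of $P_n$ on $K$, and passage to the limit in the integrated form of the ODE shows that $P(\,\cdot\,,\omega)\in\Con^1(\mathbb{R})$ solves it. Then $F_2^\lambda(x,\omega):=\int_0^xP(y,\omega)\,dy$ is $\Con^2$, vanishes at the origin, solves \eqref{eq:palasvis}, and its derivative satisfies \eqref{eq:adamvis2}.

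The crux is pointwise uniqueness. Given two solutions $P_1,P_2$ of the first-order ODE with values in $S_2$ for the same $\omega$, set $W:=P_1-P_2$. By local uniqueness for the first-order ODE, $W$ cannot vanish at a point without being identically zero, so if $W\not\equiv 0$ then $W$ has a definite sign; assume without loss of generality $W>0$ everywhere. Because $G_2$ is continuous and strictly increasing on the compact interval $S_2$, the quantity
\[ c(\delta) \;:=\; \min\{G(p)-G(q)\,:\,p,q\in S_2,\ p-q\ge \delta\} \]
is strictly positive for every $\delta>0$ and non-decreasing in $\delta$. Since $a\le 1$, this yields $W'\le -c(W)$ everywhere, so $W$ is non-increasing; hence $W(x,\omega)\ge W(x_0,\omega)$ for $x\le x_0$ and $W'(x,\omega)\le -c(W(x_0,\omega))<0$ throughout $(-\infty,x_0]$. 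Integrating from $-T$ to $x_0$ gives $W(-T,\omega)\to+\infty$ as $T\to+\infty$, contradicting the boundedness of $W$. Thus $W\equiv 0$. Stationarity is then automatic: for any $y\in\mathbb{R}$, the shift $x\mapsto P(x+y,\omega)$ is a bounded solution of the first-order ODE with environment $\tau_y\omega$ and values in $S_2$, so by uniqueness it equals $P(\,\cdot\,,\tau_y\omega)$; evaluating at $x=0$ gives the claim.

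For $F_1^\lambda$, the strip $[G_1^{-1}(\lambda),G_1^{-1}(\lambda-\beta)]$ is backward-invariant (since $G_1$ is strictly decreasing), so one solves the first-order ODE on $(-\infty,n]$ with $P_n(n,\omega)=G_1^{-1}(\lambda-\beta)$ and passes to the monotone limit as $n\to+\infty$; the uniqueness inequality becomes $W'\ge c(W)>0$ wherever $W>0$, yielding a contradiction by forward integration instead. The main obstacle is obtaining uniqueness for every $\omega$ rather than only $\mathbb{P}$-a.e.\ $\omega$; the argument above succeeds precisely because $a\le 1$ together with strict monotonicity of $G$ on the compact strip furnishes a pointwise-in-$\omega$ lower bound on the contraction rate, thereby bypassing any appeal to ergodicity (which will enter later for the full homogenization result).
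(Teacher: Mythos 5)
Your proposal is correct, and its skeleton matches the paper's: reduce to the first-order ODE for $P=F'$, show the strip is invariant, build the global solution as a limit of solutions started far to the left, get uniqueness from the divergence of $\int^x_{-\infty} dy/a(y)$ (which is where $a\le 1$ enters), and read off stationarity from uniqueness, treating the $F_1^\lambda$ branch by symmetry. The technical implementation differs in two places. For existence, the paper proves a quantitative contraction via a Gr\"onwall--Bellman variant: two solutions launched at $L$ satisfy $|f_2^\lambda(x\,|\,L,c)-f_2^\lambda(x\,|\,L,d)|\le(\Phi_2^\lambda)^{-1}\bigl(\int_L^x dy/a\bigr)$, where $\Phi_2^\lambda$ is built from a lower modulus $m_2^\lambda$ for the increments of $G_2$ on the strip, and then invokes the Cauchy criterion as $L\to-\infty$; you instead launch at the top endpoint of the strip at $x=-n$, use scalar comparison to get monotonicity in $n$, and pass to the limit by Arzel\`a--Ascoli, which is softer and avoids the appendix lemma and the construction of $m_2^\lambda$, at the price of leaning on order preservation of the scalar flow (fine here, specific to 1-D). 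For uniqueness, the paper reuses the same quantitative bound, while you argue qualitatively that two distinct bounded solutions must be strictly ordered (by two-sided Picard--Lindel\"of uniqueness) and that the contraction rate $c(\delta)>0$ together with $1/a\ge1$ forces their difference to blow up backward, contradicting boundedness; both arguments rest on exactly the same structural facts (strict monotonicity of $G_2$ on the compact strip and $a\le1$), so the paper's route buys an explicit rate and yours buys brevity. Your handling of $F_1^\lambda$ by time reversal with terminal data is equivalent to the paper's substitution $\tilde G(p)=G(-p)$, $\tilde a(x,\omega)=a(-x,\omega)$, $\tilde V(x,\omega)=V(-x,\omega)$. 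One small shared gloss: in the corner case $\lambda=\beta$ the relevant strip endpoint is $0$, where the inward-pointing sign argument for invariance is not strict and an excursion onto the other branch of $G$ must be ruled out using local Lipschitz continuity of $G$ near $0$; the paper's itemized invariance argument elides the same point, so this is not a gap attributable to you, but it is worth a line if you write this up.
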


\begin{theorem}\label{thm:thetasOK}
	Assume \eqref{eq:Gcon}--\eqref{eq:aVcon1}. With the notation in Theorem \ref{thm:staticHJ},
	\begin{equation}\label{eq:thetacom}
	\theta_1(\lambda) := \mathbb{E}[(F_1^\lambda)'(0,\omega)] \quad\text{and}\quad \theta_2(\lambda) := \mathbb{E}[(F_2^\lambda)'(0,\omega)]
	\end{equation}
	satisfy
	\[ \theta_1(\lambda)\in(G_1^{-1}(\lambda),G_1^{-1}(\lambda - \beta)) \quad\text{and}\quad \theta_2(\lambda) \in(G_2^{-1}(\lambda - \beta),G_2^{-1}(\lambda)) \]
	for every $\lambda\ge\beta$. 
	These quantities define two continuous bijections \[ \theta_1:[\beta,+\infty) \to (-\infty,\theta_1(\beta)] \quad\text{and}\quad \theta_2:[\beta,+\infty) \to [\theta_2(\beta),+\infty) \]
	which are decreasing and increasing, respectively. Moreover, their inverses $\theta_1^{-1}$ and $\theta_2^{-1}$ are locally Lipschitz continuous on their domains.
\end{theorem}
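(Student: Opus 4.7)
My plan is to establish the four claims about $\theta_1$ in turn; the arguments for $\theta_2$ are symmetric, with $F_2^\lambda,G_2$ replacing $F_1^\lambda,G_1$ and the monotonicity direction reversed.

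\emph{Strict inclusion.} By Theorem \ref{thm:staticHJ}, $(F_1^\lambda)'(0,\omega)\in[G_1^{-1}(\lambda),G_1^{-1}(\lambda-\beta)]$ almost surely. If $\theta_1(\lambda)=G_1^{-1}(\lambda)$, then $(F_1^\lambda)'(0,\omega)\equiv G_1^{-1}(\lambda)$ a.s., and stationarity propagates this to $(F_1^\lambda)'(x,\omega)=G_1^{-1}(\lambda)$ for all $x$; hence $(F_1^\lambda)''\equiv 0$, and \eqref{eq:palasvis} forces $V\equiv 0$, contradicting \eqref{eq:fruit}. The upper endpoint is ruled out symmetrically (giving $V\equiv 1$).

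\emph{Strict monotonicity and the Lipschitz inverse via a comparison ODE.} For $\beta\le\lambda<\mu$, I set $p=(F_1^\lambda)'(\,\cdot\,,\omega)$, $q=(F_1^\mu)'(\,\cdot\,,\omega)$, and $r=q-p$. Subtracting the two instances of \eqref{eq:palasvis} and using strict monotonicity of $G_1$ gives
\[
a(x)\,r'(x)=c(x)\,r(x)+(\mu-\lambda),\qquad c(x):=\frac{G_1(q(x))-G_1(p(x))}{p(x)-q(x)}\ge 0,
\]
with $c$ bounded above by the Lipschitz constant $L$ of $G_1$ on the compact range $[G_1^{-1}(\mu),0]$. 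If $r(x_0)\ge 0$ somewhere, then $a(x_0)r'(x_0)\ge\mu-\lambda>0$; this inequality persists for $x>x_0$ (since $cr\ge 0$ as long as $r\ge 0$), and $a\le 1$ yields $r(x)\ge(\mu-\lambda)(x-x_0)$, contradicting boundedness. Hence $r<0$ everywhere, which gives $\theta_1(\mu)<\theta_1(\lambda)$. For the Lipschitz inverse I rewrite the ODE as $r'=(c/a)r+(\mu-\lambda)/a$; since $c>0$ pointwise while $a\le 1$, one has $\mathbb{E}[c/a]\ge\mathbb{E}[c]>0$, so the Birkhoff theorem gives $B(x,\omega):=\int_0^x(c/a)(s,\omega)\,ds\to+\infty$. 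Uniqueness of the bounded solution on $\mathbb{R}$ (the homogeneous equation has only the unbounded family $Ce^{B(x)}$) then yields
\[
-r(0,\omega)=(\mu-\lambda)\int_0^\infty\frac{e^{-B(s,\omega)}}{a(s,\omega)}\,ds.
\]
Bounding $B(\delta)\le L\int_0^\delta 1/a\,ds=:LA(\delta)$ and choosing $\delta=\delta(\omega)$ so that $A(\delta)=1/L$ (which is possible since $a\le 1$ forces $A(\delta)\ge\delta$, so $A(\,\cdot\,,\omega):[0,+\infty)\to[0,+\infty)$ is a continuous bijection) produces the pointwise lower bound $-r(0,\omega)\ge(\mu-\lambda)/(eL)$; taking expectation gives $\theta_1(\lambda)-\theta_1(\mu)\ge(\mu-\lambda)/(eL)$, establishing the locally Lipschitz regularity of $\theta_1^{-1}$.

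\emph{Continuity and range.} Given any sequence $\lambda_n\to\lambda_0$, uniform bounds on $(F_1^{\lambda_n})'$ together with the equicontinuity supplied by the ODE itself allow extraction (via Arzel\`a--Ascoli) of a locally uniformly convergent subsequence whose limit solves \eqref{eq:palasvis} at $\lambda_0$ with the prescribed derivative range; the uniqueness half of Theorem \ref{thm:staticHJ} identifies the limit with $(F_1^{\lambda_0})'$, and dominated convergence then gives $\theta_1(\lambda_n)\to\theta_1(\lambda_0)$. Combined with strict monotonicity and the bound $\theta_1(\lambda)\le G_1^{-1}(\lambda-\beta)\to-\infty$ as $\lambda\to+\infty$, this establishes the claimed continuous decreasing bijection $\theta_1:[\beta,+\infty)\to(-\infty,\theta_1(\beta)]$. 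The main delicate step is the pointwise representation for $-r(0,\omega)$, which rests on both the ergodic divergence of $B$ and uniqueness of bounded solutions to a linear ODE whose coefficients need not be uniformly elliptic; handling this without any a priori lower bound on $a$ is what the integrating-factor construction above is designed for.
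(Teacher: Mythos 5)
Your proposal is correct, and it reaches the key quantitative ingredients by a genuinely different route than the paper. For the strict inclusion, the paper argues that $\mathbb{P}(V(0,\omega)\in(0,1))>0$ already places $(F_2^\lambda)'(0,\omega)$ strictly inside the interval with positive probability, whereas you rule out the endpoints by noting that equality of the mean with an a.s.\ one-sided bound would force $(F_i^\lambda)'$ to be constant and hence $V\equiv 0$ or $V\equiv 1$; both arguments work. The main divergence is the monotonicity/regularity step. The paper (Lemmas \ref{lem:sertbas} and \ref{lem:tarkan}) never solves the linearized equation: it shifts $(F_2^\lambda)'$ by a constant $\delta$ tied to a Lipschitz constant of $G_2$ (resp.\ a modulus of continuity of $G_2^{-1}$), observes that the difference $h_2^{\lambda,\epsilon}$ has a derivative of a definite sign whenever it is on the wrong side of $\delta$, traps it using boundedness, and then averages via \eqref{eq:ardil}; this gives the two-sided bounds $\epsilon/\tilde\kappa_2^\lambda\le\theta_2(\lambda+\epsilon)-\theta_2(\lambda)\le(\tilde m_2^\lambda)^{-1}(\epsilon)$, which simultaneously yield strict monotonicity, continuity of $\theta_2$, and the locally Lipschitz inverse. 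You instead derive the exact variation-of-constants identity $-r(0,\omega)=(\mu-\lambda)\int_0^\infty e^{-B(s,\omega)}\,a(s,\omega)^{-1}ds$, justified by $B(x,\omega)\to+\infty$ (Birkhoff for the nonnegative stationary process $c/a$, valid even if its mean is infinite) and the unboundedness of $e^{B}$, and extract the pointwise bound $-r(0,\omega)\ge(\mu-\lambda)/(eL)$ by cutting the integral at the scale where $\int_0^{\delta}a^{-1}=1/L$; continuity of $\theta_1$ is then obtained softly by Arzel\`a--Ascoli together with the uniqueness clause of Theorem \ref{thm:staticHJ} and bounded convergence. Your representation-formula argument is a clean per-$\omega$ substitute for the paper's trap argument and makes the role of the Lipschitz constant of $G$ transparent, and you correctly avoid needing any lower bound on $a$ (only finiteness of $\int_0^{\delta}a^{-1}$, automatic from continuity and positivity on compacts, is used); the paper's shift argument is more elementary and also produces an explicit modulus of continuity for $\theta_2$, which your compactness argument does not provide (and the theorem does not require). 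Two cosmetic points to patch: define $c$ at points where $p=q$ (before $r<0$ is known), e.g.\ as $0$, so the linear equation for $r$ is literally valid; and replace the appeal to ``uniqueness of the bounded solution'' by the one-line observation that the increasing bracket $r(0)+(\mu-\lambda)\int_0^x e^{-B}a^{-1}$ must tend to $0$, since otherwise $r=e^{B}\cdot(\text{bracket})$ would be unbounded.
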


\begin{remark}\label{rem:nocor}
	Recall the first paragraph of Subsection \ref{sub:literature} and note that, for every $\lambda\ge\beta$ and $i\in\{1,2\}$, the function $(x,\omega) \mapsto F_i^\lambda(x,\omega) - \theta_i(\lambda)\cdot x$ is a sublinear corrector. 
	However, we will avoid the corrector terminology in the rest of the paper because we will work directly with $F_1^\lambda$ and $F_2^\lambda$ rather than their sublinearized versions.
\end{remark}

\subsection{Homogenization}

When $\epsilon = 1$, we drop the superscript of $u^\epsilon$ in \eqref{eq:asilhuzur} and write
\begin{equation}\label{eq:birhuzur}
\partial_tu = a(x,\omega)\partial_{xx}^2u + G(\partial_xu) + \beta V(x,\omega),\quad (t,x)\in(0,+\infty)\times\mathbb{R}.\tag{$\mathrm{HJ}_{\omega}$}
\end{equation}
We assume that,
\begin{equation}\label{eq:existunique}
\begin{aligned}
&\text{for every $\omega\in\Omega$ and $\theta\in\mathbb{R}$, \eqref{eq:birhuzur} has a unique viscosity solution}\\
&\text{$u_\theta(\,\cdot\,,\,\cdot\,,\omega)\in\UC([0,+\infty)\times\mathbb{R})$ such that $u_\theta(0,x,\omega) = \theta x$ for all $x\in\mathbb{R}$,}
\end{aligned}
\end{equation}
which carries over to \eqref{eq:asilhuzur} with an arbitrary $\epsilon > 0$.
Indeed, the unique viscosity solution of the latter equation with the same initial condition is given by
\[ u_\theta^\epsilon(t,x,\omega) = \epsilon u_\theta\left(\frac{t}{\epsilon},\frac{x}{\epsilon},\omega\right). \]
See Subsection \ref{sub:viscosity} for some preliminaries regarding viscosity solutions.

We strengthen assumption \eqref{eq:aVcon1} as follows:
\begin{equation}
\text{$\sqrt{a(\,\cdot\,,\omega)}\in\Lip(\mathbb{R})$ and $V(\,\cdot\,,\omega)\in\UC(\mathbb{R})$ for every $\omega\in\Omega$.}\label{eq:aVcon2}
\end{equation}
In Subsection \ref{sub:nonflat}, we use Theorems \ref{thm:staticHJ}--\ref{thm:thetasOK} and a comparison principle to prove that, for each $\theta\notin(\theta_1(\beta),\theta_2(\beta))$, the function $u_\theta^\epsilon(\,\cdot\,,\,\cdot\,,\omega)$ converges locally uniformly as $\epsilon\to0$ for $\mathbb{P}$-a.e.\ $\omega$.
Then, in Subsection \ref{sub:flat}, we obtain the same result for each $\theta\in(\theta_1(\beta),\theta_2(\beta))$ under the following additional assumption:
\begin{equation}\label{eq:scaledhill}
\begin{aligned}
	&\text{for every $h\in(0,1)$, $C>0$ and $\mathbb{P}$-a.e.\ $\omega$, there is an interval $[L_1,L_2]$ such that}\\
	&\int_{L_1}^{L_2}\frac{dy}{a(y,\omega)} \ge C \ \text{and}\ V(\,\cdot\,,\omega) \ge h\ \text{on}\ [L_1,L_2].
\end{aligned}
\end{equation}
We refer to \eqref{eq:scaledhill} as the scaled hill condition. See Appendix \ref{app:scaledhill} for a detailed discussion.

The set of $\omega$ for which $u_\theta^\epsilon(\,\cdot\,,\,\cdot\,,\omega)$ does not converge locally uniformly as $\epsilon\to0$ is a $\mathbb{P}$-null set, but it may depend on $\theta$. In order to treat all $\theta\in\mathbb{R}$ simultaneously, we make the following assumption:
\begin{equation}\label{eq:duzlip}
\begin{aligned}
	 &\text{for every $\omega\in\Omega$ and $\theta\in\mathbb{R}$, there exists an $\ell_\theta = \ell_\theta(\omega) > 0$ such that}\\
	 &|u_\theta(t,x,\omega) - u_\theta(t,y,\omega)| \le \ell_\theta|x-y|\ \ \text{for all}\ t\in[0,+\infty)\ \text{and}\ x,y\in\mathbb{R}.
\end{aligned}
\end{equation}

Here is the precise statement of our homogenization result with linear initial data. We complete its proof in Subsection \ref{sub:homproofs}.

\begin{theorem}\label{thm:homlin}
	Assume \eqref{eq:Gcon}--\eqref{eq:fruit} and \eqref{eq:existunique}--\eqref{eq:duzlip}. Define $\Ham\in\Liploc(\mathbb{R})$ by	
	\[
	   \Ham(\theta) = \begin{cases} \theta_1^{-1}(\theta) &\text{for}\ \theta\in(-\infty,\theta_1(\beta)]\quad (\text{strictly decreasing}),\\
													\beta &\text{for}\ \theta\in(\theta_1(\beta),\theta_2(\beta))\quad\text{(flat piece)},\\
									\theta_2^{-1}(\theta) &\text{for}\ \theta\in[\theta_2(\beta),+\infty)\quad (\text{strictly increasing}),
	\end{cases}
	\]
	with the continuous bijections $\theta_1$ and $\theta_2$ in Theorem \ref{thm:thetasOK}. For $\mathbb{P}$-a.e.\ $\omega$, as $\epsilon\to0$, when subject to linear initial data, \eqref{eq:asilhuzur} homogenizes to the inviscid HJ equation
	\begin{equation}\label{eq:efhuzur}
		\partial_t\overline u = \Ham(\partial_x\overline u),\quad(t,x)\in(0,+\infty)\times\mathbb{R}.\tag{$\overline{\mathrm{HJ}}$}
	\end{equation}
	Precisely, there exists an $\Omega_0\in\mathcal{F}$ with $\mathbb{P}(\Omega_0) = 1$ such that, for every $\omega\in\Omega_0$ and $\theta\in\mathbb{R}$, as $\epsilon\to0$, the unique viscosity solution $u_\theta^\epsilon(\,\cdot\,,\,\cdot\,,\omega)$ of \eqref{eq:asilhuzur} with the initial condition $u_\theta^\epsilon(0,x,\omega) = \theta x$, $x\in\mathbb{R}$, converges locally uniformly on $[0,+\infty)\times\mathbb{R}$ to $\overline u_\theta$ defined by
	\[ \overline u_\theta(t,x) = t\Ham(\theta) + \theta x, \]
	which is the unique (classical and hence viscosity) solution of \eqref{eq:efhuzur} with the same initial condition. 
\end{theorem}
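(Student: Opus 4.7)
The plan is to assemble the proof in four stages. First, the piecewise formula for $\Ham$ inherits the stated qualitative properties directly from Theorem \ref{thm:thetasOK}: the two branches $\theta_1^{-1}$ and $\theta_2^{-1}$ are strictly monotone and locally Lipschitz, and both equal $\beta$ at the breakpoints $\theta_i(\beta)$, so $\Ham$ is continuous, coercive, strictly monotone off the flat piece, and locally Lipschitz on all of $\mathbb{R}$; the linear function $\overline u_\theta(t,x):=t\Ham(\theta)+\theta x$ is then a classical (hence viscosity) solution of \eqref{eq:efhuzur} with the claimed initial data. The remaining three stages prove $u_\theta^\epsilon\to\overline u_\theta$ in the two regimes $\theta\notin(\theta_1(\beta),\theta_2(\beta))$ and $\theta\in(\theta_1(\beta),\theta_2(\beta))$, and then glue the per-$\theta$ full-measure events into a single $\Omega_0$ valid for every $\theta\in\mathbb{R}$.

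For $\theta\notin(\theta_1(\beta),\theta_2(\beta))$, set $\lambda=\Ham(\theta)$ and choose $i\in\{1,2\}$ with $\theta_i(\lambda)=\theta$ by Theorem \ref{thm:thetasOK}. The function
\[
U_\theta^\epsilon(t,x,\omega) := t\lambda + \epsilon F_i^\lambda(x/\epsilon,\omega)
\]
is an exact classical solution of \eqref{eq:asilhuzur} by Theorem \ref{thm:staticHJ}. Since $(F_i^\lambda)'(\,\cdot\,,\omega)$ is stationary and bounded with $\mathbb{E}[(F_i^\lambda)'(0,\omega)]=\theta$, Birkhoff's ergodic theorem gives, for $\mathbb{P}$-a.e.\ $\omega$, $F_i^\lambda(x,\omega)/x\to\theta$ as $|x|\to+\infty$, and hence $\epsilon F_i^\lambda(x/\epsilon,\omega)\to\theta x$ locally uniformly in $x$. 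A comparison-principle argument for \eqref{eq:asilhuzur}, bracketing $u_\theta^\epsilon$ between small vertical shifts of $U_\theta^\epsilon$ whose sizes are controlled by the locally-uniform smallness of the initial-data gap, then yields $u_\theta^\epsilon\to t\Ham(\theta)+\theta x$ locally uniformly.

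The regime $\theta\in(\theta_1(\beta),\theta_2(\beta))$, where $\Ham(\theta)=\beta$, is the main obstacle. Here neither $F_1^\beta$ nor $F_2^\beta$ has the correct asymptotic slope $\theta$, and the target $t\beta+\theta x$ is not an exact solution of \eqref{eq:asilhuzur}. The plan is to splice the two $\beta$-level correctors together into a viscosity supersolution $\overline U^\epsilon$ and a viscosity subsolution $\underline U^\epsilon$ of \eqref{eq:asilhuzur} with $\underline U^\epsilon(0,\,\cdot\,,\omega)\le\theta x\le\overline U^\epsilon(0,\,\cdot\,,\omega)$, each converging locally uniformly to $t\beta+\theta x$; comparison then pinches $u_\theta^\epsilon$ between them. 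The scaled hill condition \eqref{eq:scaledhill} provides, for any $h<1$ and $C>0$, a random interval $[L_1,L_2]$ on which $V\ge h$ and $\int_{L_1}^{L_2}dy/a(y,\omega)\ge C$; on such a ``hill'' the viscous term has enough budget to absorb a prescribed macroscopic slope change while $G(\partial_x\cdot)+\beta V$ stays close to $\beta$, which allows one to transition between the asymptotic slopes $\theta_1(\beta)$ and $\theta_2(\beta)$ and thereby realize any intermediate slope $\theta$ in the splicing. Carrying out this construction globally on $\mathbb{R}$, with $G$ only quasiconvex and no finite speed of propagation available in the viscous setting, is the technical crux.

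The previous stages yield, for each $\theta\in\mathbb{R}$, a full-measure event $\Omega_\theta$. Setting $\Omega_0:=\bigcap_{\theta\in\mathbb{Q}}\Omega_\theta$ gives a single full-measure event on which the desired convergence holds for every rational $\theta$. Assumption \eqref{eq:duzlip}, combined with a comparison estimate between $u_\theta^\epsilon$ and $u_{\theta'}^\epsilon$ for nearby initial slopes and the local Lipschitz-ness of $\Ham$, supplies enough equicontinuity in $\theta$—uniform in $\epsilon$ on compact subsets of $[0,+\infty)\times\mathbb{R}$—to transfer convergence from a dense set of slopes to all $\theta\in\mathbb{R}$, completing the proof on $\Omega_0$.
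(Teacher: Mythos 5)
Your four-stage decomposition matches the paper's proof in structure: explicit correctors plus comparison for $\theta\notin(\theta_1(\beta),\theta_2(\beta))$ (Lemma~\ref{lem:isbu}), a splicing construction via the scaled hill condition on the flat piece (Lemmas~\ref{lem:kayahan}--\ref{lem:LBUB}), intersection over rational $\theta$, and a comparison estimate from \eqref{eq:duzlip} to go from a dense set of slopes to all of $\mathbb{R}$.

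However, your stage 2 has a genuine gap. Bracketing $u_\theta^\epsilon$ between ``small vertical shifts'' of the exact solution $U_\theta^\epsilon(t,x,\omega)=t\lambda+\epsilon F_i^\lambda(x/\epsilon,\omega)$ cannot work: for fixed $\epsilon$ and $\omega$ the initial-data gap $\epsilon F_i^\lambda(x/\epsilon,\omega)-\theta x$ is $o(|x|)$ by Birkhoff but in general unbounded in $x$, and the comparison principle (Proposition~\ref{prop:comp}) requires the initial data to be globally ordered, not merely close on compacta. The paper's remedy is to tilt as well as shift: it compares against $v_{2,\delta}^\lambda(t,x,\omega)=t(\lambda-(\kappa+1)\delta)+F_2^\lambda(x,\omega)-\delta\psi(x)-K$ with $\psi(x)=\tfrac{2}{\pi}\int_0^x\arctan y\,dy$, whose asymptotic spatial slopes at $\pm\infty$ become $\theta\mp\delta$, so the difference from $\theta x$ tends to $-\infty$ at both ends and is therefore bounded above; a finite vertical shift $K$ then closes the global ordering. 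The cost of the tilt is absorbed by modifying the time slope, using a Lipschitz constant $\kappa$ of $G$ on the range of $(F_2^\lambda)'\pm\delta$. Your stage 3 fares better because the splice itself already produces distinct asymptotic slopes $\theta_2(\beta)$ at $-\infty$ and $\theta_1(\beta)$ at $+\infty$, which strictly bracket any $\theta$ on the flat piece, so a vertical shift alone does suffice there. But note that your claim that the spliced sub/supersolutions ``converge locally uniformly to $t\beta+\theta x$'' is false: they converge to a piecewise-linear profile with kink slopes $\theta_1(\beta),\theta_2(\beta)$. The comparison only yields pointwise bounds on $u_\theta^\epsilon$ at $(1,0)$; the upgrade to locally uniform convergence of $u_\theta^\epsilon$ requires the separate $\omega$-uniform equicontinuity argument of Lemma~\ref{lem:equi} together with the Egorov--Birkhoff device cited at the end of the proof of Lemma~\ref{lem:isbu}, which your plan omits for both regimes.
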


Finally, replacing \eqref{eq:existunique} with the stronger assumption that
\begin{equation}\label{eq:well}
\begin{aligned}
	&\text{the Cauchy problem for \eqref{eq:birhuzur} is well-posed in $\UC([0,+\infty)\times\mathbb{R})$ for every $\omega\in\Omega$,}
\end{aligned}
\end{equation}
we generalize Theorem \ref{thm:homlin} to uniformly continuous initial data by citing a result from \cite{DK17} which is based on the perturbed test function method (see \cite{E89}).

\begin{corollary}\label{cor:velinim}
	 Assume \eqref{eq:Gcon}--\eqref{eq:fruit} and \eqref{eq:aVcon2}--\eqref{eq:well}. For $\mathbb{P}$-a.e.\ $\omega$, as $\epsilon\to0$, when subject to uniformly continuous initial data, \eqref{eq:asilhuzur} homogenizes to the inviscid HJ equation \eqref{eq:efhuzur} with the effective Hamiltonian $\Ham$ in Theorem \ref{thm:homlin}. Precisely, there exists an $\Omega_0\in\mathcal{F}$ with $\mathbb{P}(\Omega_0) = 1$ such that, for every $\omega\in\Omega_0$ and $g\in\UC(\mathbb{R})$, as $\epsilon\to0$, the unique viscosity solution $u_g^\epsilon(\,\cdot\,,\,\cdot\,,\omega)$ of \eqref{eq:asilhuzur} with the initial condition $u_g^\epsilon(0,\,\cdot\,,\omega) = g(\,\cdot\,)$ converges locally uniformly on $[0,+\infty)\times\mathbb{R}$ to the unique viscosity solution $\overline u_g$ of \eqref{eq:efhuzur} with the same initial condition.
\end{corollary}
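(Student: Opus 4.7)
The plan is to leverage Theorem \ref{thm:homlin} --- linear-initial-data homogenization --- together with the perturbed test function method of \cite{E89}, as packaged in a result from \cite{DK17}, to upgrade to general UC initial data. All the key ingredients will be in hand: linear-data convergence for every $\theta \in \mathbb{R}$ on a common full-measure set $\Omega_0$; continuity and coercivity of $\Ham$ (Theorem \ref{thm:homlin}); well-posedness of \eqref{eq:birhuzur} in $\UC([0,+\infty) \times \mathbb{R})$ from assumption \eqref{eq:well}; and standard comparison for \eqref{eq:efhuzur} with UC data, which follows from continuity and coercivity of $\Ham$.

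First I would fix $\omega \in \Omega_0$ and $g \in \UC(\mathbb{R})$, then establish an $\epsilon$-independent modulus of continuity for $u_g^\epsilon(\,\cdot\,,\,\cdot\,,\omega)$ on compact subsets of $[0,+\infty) \times \mathbb{R}$ by comparing against linear-in-$t$ barriers (using boundedness of $V$ and $G(0) = 0$). Next I would pass to the upper and lower half-relaxed limits $\overline u$ and $\underline u$ of the family $\{u_g^\epsilon(\,\cdot\,,\,\cdot\,,\omega)\}_{\epsilon > 0}$ and use the static-equation solutions $F_i^\lambda$ of Theorem \ref{thm:staticHJ} as perturbed test functions --- directly for slopes $\theta \notin (\theta_1(\beta),\theta_2(\beta))$, and via the $F_1^\beta$, $F_2^\beta$ sub- and supercorrectors from Subsection \ref{sub:flat} for $\theta \in (\theta_1(\beta),\theta_2(\beta))$ --- to show that $\overline u$ is a viscosity subsolution and $\underline u$ is a viscosity supersolution of \eqref{eq:efhuzur}. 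Equicontinuity at $t = 0$ forces $\overline u(0,\,\cdot\,) = \underline u(0,\,\cdot\,) = g$, so comparison for \eqref{eq:efhuzur} gives $\overline u = \underline u = \overline u_g$, yielding the required locally uniform convergence.

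The hard part will be the perturbed test function verification for slopes in the flat interval $(\theta_1(\beta),\theta_2(\beta))$, where no genuine corrector exists. This is precisely the configuration that \cite{DK17} handles in the piecewise-convex setting, and their argument transfers once sub-/supercorrectors built from $F_1^\beta$ and $F_2^\beta$ via the scaled hill condition \eqref{eq:scaledhill} are supplied. Accordingly, modulo invoking the cited \cite{DK17} theorem, the corollary reduces to the already-proven Theorem \ref{thm:homlin}.
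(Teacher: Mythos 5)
Your plan is essentially the paper's proof: cite \cite[Theorem 3.1]{DK17} (perturbed test function upgrade from linear to uniformly continuous initial data) together with Theorem \ref{thm:homlin}, taking $\Omega_0$ to be the full-measure set already produced there. One correction to your framing, though: \cite[Theorem 3.1]{DK17} is a black box whose hypothesis is precisely the conclusion of Theorem \ref{thm:homlin} (locally uniform convergence of $u_\theta^\epsilon$ for \emph{every} $\theta\in\mathbb{R}$ on a common full-measure set, together with continuity/coercivity of $\Ham$ and well-posedness), and its perturbed test function step uses the linear-data solutions $u_\theta^\epsilon$ themselves as approximate correctors, so the flat interval $(\theta_1(\beta),\theta_2(\beta))$ is not a separate ``hard part'' here --- the $F_1^\beta,F_2^\beta$ sub/supersolution construction and the scaled hill condition have already done their work inside the proof of Theorem \ref{thm:homlin} (Lemma \ref{lem:LBUB}) and need not be re-entered at this stage.
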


See Theorem \ref{thm:suffcon} for a set of natural conditions (which are not meant to be sharp) under which \eqref{eq:duzlip}--\eqref{eq:well} are valid.

\section{The static HJ equation}\label{sec:staticHJ}

In the following two lemmas leading to the proof of Theorem \ref{thm:staticHJ}, we drop $\omega$ and assume that
\begin{equation}\label{eq:zengin}
	\text{$a:\mathbb{R}\to(0,1]$ and $V:\mathbb{R}\to[0,1]$ are in $\Con(\mathbb{R})$.}
\end{equation}

\begin{lemma}\label{lem:kirbiryy}
	Assume \eqref{eq:Gcon}--\eqref{eq:Gshape} and \eqref{eq:zengin}. For every $\lambda\ge\beta$, $L\in\mathbb{R}$ and $c\in[G_2^{-1}(\lambda - \beta),G_2^{-1}(\lambda)]$,
	the equation
	\begin{equation}\label{eq:palasLL}
		a(x)f'(x) + G(f(x)) + \beta V(x) = \lambda,\quad x\in[L,+\infty),
	\end{equation} 
	has a unique (classical) solution $f_2^\lambda(\,\cdot\,|\,L,c)$ that satisfies
	\begin{equation}\label{eq:topcer}
		f_2^\lambda(L\,|\,L,c) = c.
	\end{equation}
	Moreover,
	\begin{equation}\label{eq:adamcer}
		f_2^\lambda(x\,|\,L,c) \in[G_2^{-1}(\lambda - \beta),G_2^{-1}(\lambda)]\ \ \text{for all}\ x\in[L,+\infty).
	\end{equation}
\end{lemma}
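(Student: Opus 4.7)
My plan is to recast \eqref{eq:palasLL} as the explicit first-order ODE
\[ f'(x) \;=\; F(x,f(x)) \;:=\; \frac{\lambda - G(f(x)) - \beta V(x)}{a(x)}, \]
which is legitimate because $a>0$ on $\mathbb{R}$ and $a,V\in\Con(\mathbb{R})$. Since $G\in\Liploc(\mathbb{R})$ and $a$ is continuous and strictly positive (hence bounded away from $0$ on any compact $x$-interval), the right-hand side $F$ is continuous in $(x,u)$ and locally Lipschitz in $u$ uniformly on bounded $x$-intervals. The classical Picard--Lindel\"of theorem therefore yields a unique $C^1$ solution $f_2^\lambda(\,\cdot\,|\,L,c)$ on some maximal forward interval $[L,x^*)$ satisfying the initial condition \eqref{eq:topcer}.

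The heart of the argument is an \emph{a priori} barrier estimate showing $x^*=+\infty$ and \eqref{eq:adamcer}. Using \eqref{eq:Gcon}--\eqref{eq:Gshape}, I would note that $G_2:[0,+\infty)\to[0,+\infty)$ is a continuous, strictly increasing bijection, so $G_2^{-1}(\lambda-\beta)$ and $G_2^{-1}(\lambda)$ are well defined for every $\lambda\ge\beta$. The constant functions $g_-\equiv G_2^{-1}(\lambda-\beta)$ and $g_+\equiv G_2^{-1}(\lambda)$ satisfy
\[ F(x,g_-) \;=\; \frac{\beta(1-V(x))}{a(x)} \;\ge\; 0 \;=\; g_-'(x), \qquad F(x,g_+) \;=\; -\frac{\beta V(x)}{a(x)} \;\le\; 0 \;=\; g_+'(x), \]
i.e., they are an ODE sub- and supersolution, respectively. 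Because $c\in[g_-(L),g_+(L)]$ by hypothesis, the standard comparison principle for first-order ODEs (applicable thanks to the Lipschitz dependence of $F$ on $u$) gives
\[ G_2^{-1}(\lambda-\beta) \;\le\; f_2^\lambda(x\,|\,L,c) \;\le\; G_2^{-1}(\lambda) \quad\text{for every}\ x\in[L,x^*), \]
which is precisely \eqref{eq:adamcer} on the interval of existence. Since the solution is thereby trapped in a fixed compact $f$-interval and $F$ is bounded on $[L,M]\times[G_2^{-1}(\lambda-\beta),G_2^{-1}(\lambda)]$ for every $M>L$, the usual continuation criterion forces $x^*=+\infty$.

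The main obstacle, as I see it, is purely the ODE comparison step; once one observes that the two extreme values of $G_2^{-1}$ yield constant super- and subsolutions, the rest is routine. This invariance of $[G_2^{-1}(\lambda-\beta),G_2^{-1}(\lambda)]$ under the flow is precisely what converts the structural assumptions on $G$ (coercivity and monotonicity of $G_2$) and the range conditions on $a$ and $V$ into the global statement of the lemma, and I would expect the paper's proof to follow essentially this path.
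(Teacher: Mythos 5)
Your argument is correct and matches the paper's proof in essence: both reduce to the explicit ODE $f' = (\lambda - G(f) - \beta V)/a$, invoke Picard--Lindel\"of for local existence and uniqueness, and then establish global existence by showing that the solution cannot leave $[G_2^{-1}(\lambda-\beta),G_2^{-1}(\lambda)]$. The only cosmetic difference is that you phrase the trapping step as a comparison with the constant sub- and supersolutions $g_-\equiv G_2^{-1}(\lambda-\beta)$ and $g_+\equiv G_2^{-1}(\lambda)$ (valid here because $F$ is locally Lipschitz in the $u$-variable), whereas the paper argues directly that the sign of $f'$ pushes the solution back inward whenever it would cross either endpoint; these are two wordings of the same invariant-region argument.
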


\begin{proof}
	We rearrange \eqref{eq:palasLL} and write
	\[ f'(x) = \frac1{a(x)}\left(\lambda - G(f(x)) - \beta V(x)\right). \]
	By the Picard-Lindel\"of theorem, there is a unique local solution $f_2^\lambda(\,\cdot\,|\,L,c)$ in a neighborhood of $L$ that satisfies \eqref{eq:topcer}. Let us check that there is no blow-up at any $x\in(L,+\infty)$.
	\begin{itemize}
		\item If $f_2^\lambda(x\,|\,L,c) < G_2^{-1}(\lambda - \beta)$ for some $x \in(L,+\infty)$, then
		\[ \lambda - G(f_2^\lambda(x\,|\,L,c)) - \beta V(x) > \lambda - (\lambda - \beta) - \beta = 0 \]
		and $(f_2^\lambda)'(x\,|\,L,c) > 0$. We deduce that $f_2^\lambda(x\,|\,L,c) \ge G_2^{-1}(\lambda - \beta)$ for all $x\in[L,+\infty)$.
		
		\item If $f_2^\lambda(x\,|\,L,c) > G_2^{-1}(\lambda)$ for some $x \in(L,+\infty)$, then 
		\[ \lambda - G(f_2^\lambda(x\,|\,L,c)) - \beta V(x) < \lambda - \lambda - 0 = 0 \]
		and $(f_2^\lambda)'(x\,|\,L,c) < 0$. We deduce that $f_2^\lambda(x\,|\,L,c) \le G_2^{-1}(\lambda)$ for all $x\in[L,+\infty)$.	
	\end{itemize}
	We conclude that $f_2^\lambda(\,\cdot\,|\,L,c)$ is the unique solution of \eqref{eq:palasLL} that satisfies \eqref{eq:topcer}. Moreover, the bounds in \eqref{eq:adamcer} hold.
\end{proof}

\begin{lemma}\label{lem:kiriki}
	Assume \eqref{eq:Gcon}--\eqref{eq:Gshape} and \eqref{eq:zengin}. For every $\lambda\ge\beta$, the equation
	\begin{equation}\label{eq:palasBB}
		a(x)f'(x) + G(f(x)) + \beta V(x) = \lambda,\quad x\in\mathbb{R},
	\end{equation} 
	has a unique solution $f_2^\lambda\in\Con^1(\mathbb{R})$ such that
	\begin{equation}\label{eq:adamBB}
		f_2^\lambda(x) \in [G_2^{-1}(\lambda - \beta),G_2^{-1}(\lambda)]\ \ \text{for all}\ x\in\mathbb{R}.
	\end{equation}
\end{lemma}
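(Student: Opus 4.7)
The strategy is to build a full-line solution by passing to the limit $L\to-\infty$ in Lemma~\ref{lem:kirbiryy}, then prove uniqueness by a monotonicity argument exploiting the strict monotonicity of $G_2$ together with the bound $a\le 1$. Throughout, write $p_1:=G_2^{-1}(\lambda-\beta)$ and $p_2:=G_2^{-1}(\lambda)$.

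For existence, fix any $c_0\in[p_1,p_2]$ and, for each $n\in\mathbb{N}$, let $f_n:=f_2^\lambda(\,\cdot\,|\,-n,c_0)$ be the $\Con^1$ solution on $[-n,+\infty)$ provided by Lemma~\ref{lem:kirbiryy}, whose range lies in $[p_1,p_2]$. On any compact $[-N,N]$, continuity and positivity of $a$ give $\min_{[-N,N]}a>0$, so rewriting the ODE as $f_n'=(\lambda-G(f_n)-\beta V)/a$ bounds $|f_n'|$ uniformly in $n\ge N$. Arzel\`a--Ascoli together with a standard diagonal extraction then yields a subsequence converging uniformly on compacts to some continuous $f_2^\lambda:\mathbb{R}\to[p_1,p_2]$; continuity of $G$, $V$ and $a$ upgrades this to uniform-on-compacts convergence of the derivatives, so $f_2^\lambda\in\Con^1(\mathbb{R})$ and satisfies~\eqref{eq:palasBB}.

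For uniqueness, let $f,g$ be two such solutions and set $h:=f-g$. Subtracting the ODEs gives $h'=(G(g)-G(f))/a$, so strict monotonicity of $G_2$ on $[0,+\infty)$ forces $h'(x)<0$ wherever $h(x)>0$. Suppose for contradiction that $h(x_0)>0$, and let $I$ be the connected component of $\{h>0\}$ containing $x_0$. If $I=(\alpha,\beta_0)$ with $\alpha>-\infty$, continuity would give $h(\alpha)=0$, and strict decrease on $(\alpha,\beta_0)$ together with $\lim_{x\to\alpha^+}h(x)=0$ would force $h<0$ on $(\alpha,\beta_0)$, contradicting $h>0$ there; hence $I=(-\infty,\beta_0)$, and $h(x)\ge h(x_0)>0$ for all $x\le x_0$. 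By continuity and strict monotonicity of $G$ on the compact interval $[p_1,p_2]$, there exists $\mu>0$ such that $G(p)-G(q)\ge\mu$ for all $p,q\in[p_1,p_2]$ with $p-q\ge h(x_0)$; combined with $a\le 1$, this gives $h'(x)\le-\mu$ for all $x\le x_0$, and integrating on $[-N,x_0]$ yields $h(-N)\ge h(x_0)+\mu(x_0+N)\to+\infty$, contradicting $|h|\le p_2-p_1$. By symmetry, $h\equiv 0$.

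The main obstacle is uniqueness. Since $a$ need not be bounded below on $\mathbb{R}$ and $G$ is only strictly quasiconvex (so merely strictly monotone, not one-sided Lipschitz from below, on the relevant range), standard Gr\"onwall comparison does not apply. The resolution is to use two mild structural features in tandem: strict monotonicity of $G_2$ on the compact target range provides a uniform lower gap for $G(f)-G(g)$ once $f-g$ is bounded below, while the bound $a\le 1$ makes the relevant integral grow at least linearly, forcing $h$ to blow up on a half-line and contradicting its a priori boundedness.
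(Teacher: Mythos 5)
Your proof is correct, but it takes a genuinely different route from the paper's. The paper constructs a modulus-of-continuity function $m_2^\lambda$ for $G_2^{-1}$ on $[\lambda-\beta,\lambda]$ and derives a quantitative Gr\"onwall-type decay estimate (Appendix~\ref{app:gronwall}): two solutions of \eqref{eq:palasLL} on $[L,+\infty)$ started at distinct values $c<d$ in the target range satisfy
\[
|f_2^\lambda(x\,|\,L,d)-f_2^\lambda(x\,|\,L,c)| \le (\Phi_2^\lambda)^{-1}\!\left(\int_L^x\frac{dy}{a(y)}\right),
\]
and $a\le 1$ forces the argument of $(\Phi_2^\lambda)^{-1}$ to diverge as $L\to-\infty$. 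This one estimate then delivers both existence (the functions $L\mapsto f_2^\lambda(x\,|\,L,c)$ form a Cauchy net, so the actual limit exists, independently of $c$) and uniqueness (apply the estimate to two full-line solutions restricted to $[L,\infty)$ and send $L\to-\infty$). You instead split the two: existence comes from soft compactness (Arzel\`a--Ascoli with a diagonal extraction applied to $f_n:=f_2^\lambda(\,\cdot\,|\,-n,c_0)$, using the uniform $\Con^1$ bounds on each compact set), and uniqueness is a separate qualitative argument -- the sign of $h'=(G(g)-G(f))/a$ forces the positivity set of $h=f-g$ to be a left half-line, along which a uniform monotonicity gap $\mu>0$ for $G_2$ over the compact target range together with $a\le 1$ gives $h'\le-\mu$, so $h$ blows up linearly and contradicts its a priori bound $|h|\le G_2^{-1}(\lambda)-G_2^{-1}(\lambda-\beta)$. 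Both approaches are valid and both exploit $a\le1$ in the same essential place. The paper's route is more constructive (no subsequence extraction) and introduces the modulus-of-continuity machinery that reappears in the proofs of Lemmas~\ref{lem:sertbas}--\ref{lem:tarkan}; yours is more elementary and avoids the Gr\"onwall variant of Appendix~\ref{app:gronwall} entirely, at the cost of not producing an explicit rate.
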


\begin{proof}
	For every $\lambda \ge \beta$, there is a strictly increasing and continuous function $m_2^\lambda$ 
	such that
	\begin{equation}\label{eq:GLB}
	 	G_2(p + q) - G_2(p) \ge m_2^\lambda(q)\ \ \text{whenever}\ \ G_2^{-1}(\lambda - \beta) \le p \le p + q \le G_2^{-1}(\lambda).
	\end{equation}
	Equivalently, $(m_2^\lambda)^{-1}$ is a modulus of continuity for $G_2^{-1}$ on $[\lambda - \beta,\lambda]$. Without loss of generality, assume that
	\[ m_2^\lambda(q) \le q. \]
	
	Fix any $L\in\mathbb{R}$ and $c,d\in [G_2^{-1}(\lambda - \beta), G_2^{-1}(\lambda)]$ such that $c < d$. Recall Lemma \ref{lem:kirbiryy} and let $f_2^\lambda(\,\cdot\,|\,L,c)$ and $f_2^\lambda(\,\cdot\,|\,L,d)$ be the unique solutions of \eqref{eq:palasLL} that satisfy
	\[ f_2^\lambda(L\,|\,L,c) = c \quad\text{and}\quad f_2^\lambda(L\,|\,L,d) = d. \]
	Note that
	\begin{align*}
		\lambda &= a(x)(f_2^\lambda)'(x\,|\,L,c) + G_2(f_2^\lambda(x\,|\,L,c)) + \beta V(x)\\
				&= a(x)(f_2^\lambda)'(x\,|\,L,d) + G_2(f_2^\lambda(x\,|\,L,d)) + \beta V(x)
	\end{align*}
	for every $x\in[L,+\infty)$ by \eqref{eq:adamcer}. Rearranging the second equality, we get
	\[ a(x) \left[ (f_2^\lambda)'(x\,|\,L,d) - (f_2^\lambda)'(x\,|\,L,c) \right] + G_2(f_2^\lambda(x\,|\,L,d)) - G_2(f_2^\lambda(x\,|\,L,c)) = 0. \]
	It follows that $f_2^\lambda(x\,|\,L,d) \ge f_2^\lambda(x\,|\,L,c)$ for every $x\in[L,+\infty)$. Therefore,
	\[ h_2^\lambda(x\,|\,L,c,d) := f_2^\lambda(x\,|\,L,d) - f_2^\lambda(x\,|\,L,c) \]
	is a nonnegative function in $\Con^1([L,+\infty))$ such that
	\[ h_2^\lambda(L\,|\,L,c,d) = d - c \in (0,G_2^{-1}(\lambda)] \]
	and
	\[ a(x)(h_2^\lambda)'(x\,|\,L,c,d) + m_2^\lambda(h_2^\lambda(x\,|\,L,c,d)) \le 0 \]
	by \eqref{eq:GLB}. We apply a variant of the Gr\"onwall-Bellman lemma (see Appendix \ref{app:gronwall}) and deduce that
	\begin{equation}\label{eq:quantbdiki}
	h_2^\lambda(x\,|\,L,c,d) \le (\Phi_2^\lambda)^{-1}\left(\int_L^x\frac{dy}{a(y)}\right),
	\end{equation}
	where
	\[ \Phi_2^\lambda(p) = \int_p^{G_2^{-1}(\lambda)}\frac{dq}{m_2^\lambda(q)} \] 
	and
	\begin{equation}\label{eq:iyiya}
		\lim_{z\to+\infty} (\Phi_2^\lambda)^{-1}(z) = 0.
	\end{equation}

	Fix any $x\in\mathbb{R}$. For every $L\in(-\infty,x)$, $c,d\in[G_2^{-1}(\lambda-\beta),G_2^{-1}(\lambda)]$ and $L',L''\in (-\infty,L]$, we can restrict the functions $f_2^\lambda(\,\cdot\,|\,L',c)$ and $f_2^\lambda(\,\cdot\,|\,L'',d)$ to the interval $[L,+\infty)$. If
	\[ f_2^\lambda(L\,|\,L',c) = f_2^\lambda(L\,|\,L'',d), \]
	then $f_2^\lambda(x\,|\,L',c) = f_2^\lambda(x\,|\,L'',d)$ by the uniqueness in Lemma \ref{lem:kirbiryy}. Otherwise, we use \eqref{eq:quantbdiki} to deduce that
	\[ |f_2^\lambda(x\,|\,L',c) - f_2^\lambda(x\,|\,L'',d)| \le (\Phi_2^\lambda)^{-1}\left(\int_L^x\frac{dy}{a(y)}\right). \]
	By \eqref{eq:iyiya} and the Cauchy criterion for convergence, the limit
	\[ f_2^\lambda(x) := \lim_{L\to -\infty}f_2^\lambda(x\,|\,L,c) \]
	exists, $f_2^\lambda(x) \in [G_2^{-1}(\lambda - \beta),G_2^{-1}(\lambda)]$ and it is independent of $c\in [G_2^{-1}(\lambda - \beta),G_2^{-1}(\lambda)]$.
	
	Fix any $c \in [G_2^{-1}(\lambda - \beta),G_2^{-1}(\lambda)]$. Note that, for every $x_1,x_2\in\mathbb{R}$ such that $x_1 < x_2$,
	\begin{align*}
	g_2^\lambda(x) &:= \lim_{L\to-\infty}(f_2^\lambda)'(x\,|\,L,c) = \lim_{L\to-\infty}\frac1{a(x)}\left(\lambda - G(f_2^\lambda(x\,|\,L,c)) - \beta V(x)\right)\\
	&\; = \frac1{a(x)}\left(\lambda - G(f_2^\lambda(x)) - \beta V(x)\right)
	\end{align*}
	and the limits are uniform on $[x_1,x_2]$. Consequently,
	\begin{align*}
	\int_{x_1}^{x_2}g_2^\lambda(x)dx &= \lim_{L\to-\infty} \int_{x_1}^{x_2}(f_2^\lambda)'(x\,|\,L,c)dx = \lim_{L\to-\infty}( f_2^\lambda(x_2\,|\,L,c) - f_2^\lambda(x_1\,|\,L,c) )\\
	&= f_2^\lambda(x_2) - f_2^\lambda(x_1)
	\end{align*}
	and $g_2^\lambda(x) = (f_2^\lambda)'(x)$. We conclude that $f_2^\lambda$ is a solution of \eqref{eq:palasBB} in $\Con^1(\mathbb{R})$.
	
	Finally, once we impose the bounds in \eqref{eq:adamBB}, uniqueness follows. Indeed, suppose $f_2^\lambda,\tilde f_2^\lambda\in\Con^1(\mathbb{R})$ are solutions of \eqref{eq:palasBB} that both satisfy \eqref{eq:adamBB}. For any $x\in\mathbb{R}$, if there exists an $L\in (-\infty,x)$ such that $f_2^\lambda(L) = \tilde f_2^\lambda(L)$, then $f_2^\lambda(x) = \tilde f_2^\lambda(x)$ by the uniqueness in Lemma \ref{lem:kirbiryy}. Otherwise,
	\[ 0 \le |f_2^\lambda(x) - \tilde f_2^\lambda(x)| \le \lim_{L\to-\infty}(\Phi_2^\lambda)^{-1}\left(\int_L^x\frac{dy}{a(y)}\right) = 0 \]
	as in \eqref{eq:quantbdiki}--\eqref{eq:iyiya}.
\end{proof}

We are ready to go back to the stochastic setting.

\begin{proof}[Proof of Theorem \ref{thm:staticHJ}]
	For every $\lambda\ge\beta$ and $\omega\in\Omega$, by Lemma \ref{lem:kiriki}, the equation
    \[ a(x,\omega)f' + G(f) + \beta V(x,\omega) = \lambda,\quad x\in\mathbb{R}, \]
    has a unique solution $f_2^\lambda(\,\cdot\,,\omega)\in\Con^1(\mathbb{R})$ such that $f_2^\lambda(x,\omega) \in[G_2^{-1}(\lambda - \beta),G_2^{-1}(\lambda)]$ for all $x\in\mathbb{R}$. Define $F_2^\lambda(\,\cdot\,,\omega)$ by setting
    \[ F_2^\lambda(x,\omega) = \int_0^x f_2^\lambda(y,\omega)dy \]
    for every $x\in\mathbb{R}$. It follows immediately that $F_2^\lambda(\,\cdot\,,\omega)$ is the unique solution of \eqref{eq:palasvis} in $\Con^2(\mathbb{R})$ that satisfies \eqref{eq:adamvis2}.
    The uniqueness of the solution in this class, in combination with the stationarity of the functions $a$ and $V$, implies that $(F_2^\lambda)'$ is stationary. Indeed, for every $x,y\in\mathbb{R}$ and $\omega\in\Omega$,
    \begin{align*}
    	\lambda &= a(x+y,\omega)(F_2^\lambda)''(x+y,\omega) + G((F_2^\lambda)'(x+y,\omega)) + \beta V(x+y,\omega)\\
    	&= a(x,\tau_y\omega)(F_2^\lambda)''(x+y,\omega) + G((F_2^\lambda)'(x+y,\omega)) + \beta V(x,\tau_y\omega).
    \end{align*}
    Therefore, $F_2^\lambda(x+y,\omega) - F_2^\lambda(y,\omega) = F_2^\lambda(x,\tau_y\omega)$ and $(F_2^\lambda)'(x+y,\omega) = (F_2^\lambda)'(x,\tau_y\omega)$. 
    
    For every $p,x\in\mathbb{R}$ and $\omega\in\Omega$, let
    \begin{equation}\label{eq:lazimol}
	    \tilde G(p) = G(-p),\quad \tilde a(x,\omega) = a(-x,\omega),\quad \tilde V(x,\omega) = V(-x,\omega) \quad\text{and}\quad \tilde F(x,\omega) = F(-x,\omega).
	\end{equation}
    After these substitutions, \eqref{eq:palasvis} becomes
    \[ \tilde a(x,\omega)\tilde F'' + \tilde G(\tilde F') + \beta \tilde V(x,\omega) = \lambda,\quad x\in\mathbb{R}. \]
    Moreover, assumptions \eqref{eq:Gcon}--\eqref{eq:aVcon1} translate to $\tilde G$, $\tilde a$ and $\tilde V$ (if we introduce and work with $\tilde\tau_x = \tau_{-x}$).
    The desired conclusions regarding the existence \& uniqueness of $F_1^\lambda$ and the stationarity of $(F_1^\lambda)'$ follow.
\end{proof}

For every $\lambda\ge\beta$ and $\mathbb{P}$-a.e.\ $\omega$,
\begin{equation}\label{eq:ardil}
\begin{aligned}
	  &\lim_{x\to\pm\infty}\frac1{x}F_2^\lambda(x,\omega) = \lim_{x\to\pm\infty}\frac1{x} \left(F_2^\lambda(x,\omega) - F_2^\lambda(0,\omega)\right)\\
	= &\lim_{x\to\pm\infty}\frac1{x}\int_0^x (F_2^\lambda)'(y,\omega)dy = \mathbb{E}[(F_2^\lambda)'(0,\omega)] = \theta_2(\lambda)
\end{aligned}
\end{equation}
by Theorem \ref{thm:staticHJ}, the Birkhoff ergodic theorem and the definition of $\theta_2(\lambda)$ in \eqref{eq:thetacom}.

\begin{lemma}\label{lem:sertbas}
	Assume \eqref{eq:Gcon}--\eqref{eq:aVcon1}. For every $\lambda\ge\beta$ and $\epsilon \in (0,1)$,
	\[ \theta_2(\lambda + \epsilon) - \theta_2(\lambda) \ge \epsilon/{\tilde\kappa_2^\lambda}, \]
	where $\tilde\kappa_2^\lambda$ is a Lipschitz constant for $G_2$ on $[G_2^{-1}(\lambda - \beta),G_2^{-1}(\lambda + 1)]$.
\end{lemma}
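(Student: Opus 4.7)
The plan is to upgrade the identity obtained from subtracting the two copies of \eqref{eq:palasBB} into a pointwise lower bound $h(x, \omega) := (F_2^{\lambda+\epsilon})'(x,\omega) - (F_2^\lambda)'(x,\omega) \geq \epsilon/\tilde\kappa_2^\lambda$ for every $x \in \mathbb{R}$ and $\omega \in \Omega$. From this and \eqref{eq:thetacom} the lemma follows by evaluating at $x = 0$ and taking expectation. Setting $f_i := (F_2^{\lambda_i})'$ with $\lambda_1 = \lambda$ and $\lambda_2 = \lambda + \epsilon$, the subtraction yields the pointwise identity
\[ a(x,\omega)\, h'(x,\omega) + G_2(f_2(x,\omega)) - G_2(f_1(x,\omega)) = \epsilon. \]

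The first step is the comparison $h \geq 0$. On any connected component of $\{h < 0\}$, strict monotonicity of $G_2$ makes the middle term negative, so the identity gives $a h' > \epsilon$, and $a \leq 1$ upgrades this to $h' > \epsilon$. If the left endpoint of such a component is finite at $x_*$, continuity forces $h(x_*) = 0$, whence the identity at $x_*$ gives $h'(x_*) = \epsilon/a(x_*) > 0$, contradicting $h < 0$ immediately to the right; if instead the component is unbounded on the left, integrating $h' > \epsilon$ from a very negative $L$ contradicts the boundedness of $h$ provided by Theorem \ref{thm:staticHJ}.

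For the second step, note that $f_1, f_2 \in [G_2^{-1}(\lambda - \beta), G_2^{-1}(\lambda + 1)]$ since $\epsilon < 1$, so the Lipschitz estimate $G_2(f_2) - G_2(f_1) \leq \tilde\kappa_2^\lambda h$ applies and substituting it into the identity yields $a h' + \tilde\kappa_2^\lambda h \geq \epsilon$. Dividing by $a$ and multiplying by the integrating factor
\[ \mu(x, \omega) := \exp\left(\int_0^x \frac{\tilde\kappa_2^\lambda}{a(z, \omega)}\, dz\right) \]
converts this into $(h\mu)'(x, \omega) \geq (\epsilon/a(x,\omega))\, \mu(x,\omega)$. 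Because $a \leq 1$, $\int_L^0 \tilde\kappa_2^\lambda/a(z,\omega)\, dz \geq -\tilde\kappa_2^\lambda L \to +\infty$ as $L \to -\infty$, so $\mu(L,\omega) \to 0$, and boundedness of $h$ gives $h(L,\omega)\mu(L,\omega) \to 0$. Integrating from $L$ to $x$, sending $L \to -\infty$, dividing by $\mu(x,\omega)$, and changing variables to $u = \int_y^x \tilde\kappa_2^\lambda/a(z, \omega)\, dz$ (for which $dy/a(y,\omega) = -du/\tilde\kappa_2^\lambda$ and $y \in (-\infty, x]$ is carried bijectively onto $u \in [0,+\infty)$), we arrive at $h(x,\omega) \geq (\epsilon/\tilde\kappa_2^\lambda) \int_0^{+\infty} e^{-u}\, du = \epsilon/\tilde\kappa_2^\lambda$.

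The main obstacle is the second step, i.e., extracting a useful lower bound on $\mathbb{E}[h]$ from $a h' + \tilde\kappa_2^\lambda h \geq \epsilon$: taking expectations directly fails because $\mathbb{E}[a(0,\omega) h'(0,\omega)]$ need not vanish when $a$ is non-constant (its sign is essentially what one is trying to estimate). Dividing by $a$ first exploits positivity of $a$ to produce an inequality admitting an explicit integrating factor, and the assumption $a(\,\cdot\,,\omega) \leq 1$ plays a double role: it makes $\int_L^0 1/a$ diverge so that the boundary term vanishes at $-\infty$, and it yields the sharp constant $\epsilon/\tilde\kappa_2^\lambda$ via the normalization $\int_0^{+\infty} e^{-u}\, du = 1$ after the change of variables.
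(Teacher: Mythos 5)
Your proof is correct, and it follows a genuinely different route from the paper's. The paper compares $F_2^{\lambda+\epsilon}$ against a shifted version of $F_2^\lambda$ (adding $\delta = (\epsilon - \epsilon')/\tilde\kappa_2^\lambda$ to its derivative, at the cost of an auxiliary margin $\epsilon'$), deduces $(h_2^{\lambda,\epsilon})' \ge \epsilon'$ wherever $h_2^{\lambda,\epsilon} \le \delta$, runs a barrier/translation argument to conclude $h_2^{\lambda,\epsilon} \ge \delta$ pointwise, and finally sends $\epsilon'\to 0$. You instead subtract the two ODEs directly, first prove the ordering $h \ge 0$ by the left-endpoint contradiction argument, then exploit the Lipschitz bound to obtain the linear differential inequality $a h' + \tilde\kappa_2^\lambda h \ge \epsilon$, and close with the classical integrating factor $\mu(x,\omega) = \exp\bigl(\int_0^x \tilde\kappa_2^\lambda/a\bigr)$ together with the divergence $\int_{-\infty}^0 a^{-1} = +\infty$ (from $a \le 1$). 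This yields $h \ge \epsilon/\tilde\kappa_2^\lambda$ pointwise with no auxiliary $\epsilon'$ and no limiting step, so the sharp constant appears directly; the paper's barrier argument avoids the integrating-factor machinery but needs the extra approximation parameter. Both routes are comparable in length and rigor, and both correctly exploit $a\le 1$ (in your case, doubly: to make the boundary term vanish and to normalize the exponential integral), and both conclude via \eqref{eq:thetacom} and the ergodic theorem as in \eqref{eq:ardil}.
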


\begin{proof}
	For every $\lambda\ge\beta$, $\epsilon\in(0,1)$ and $\epsilon'\in(0,\epsilon)$, let $\delta = (\epsilon - \epsilon')/{\tilde\kappa_2^\lambda}$ and note that
	\begin{align*}
		&a(x,\omega)(F_2^\lambda)''(x,\omega) + G_2((F_2^\lambda)'(x,\omega) + \delta) + \beta V(x,\omega)\\
	\le\ &a(x,\omega)(F_2^\lambda)''(x,\omega) + G_2((F_2^\lambda)'(x,\omega)) + \beta V(x,\omega) + \epsilon - \epsilon' = \lambda + \epsilon - \epsilon',\quad (x,\omega)\in\mathbb{R}\times\Omega,
	\end{align*}
	by \eqref{eq:palasvis} and the following bounds due to \eqref{eq:adamvis2}: 
	\[ \lambda - \beta \le G_2((F_2^\lambda)'(x,\omega)) < G_2((F_2^\lambda)'(x,\omega) + \delta) < G_2((F_2^\lambda)'(x,\omega) + 1/{\tilde\kappa_2^\lambda}) \le \lambda + 1. \]
	Since
	\begin{equation}\label{eq:vpkonus}
	a(x,\omega)(F_2^{\lambda + \epsilon})''(x,\omega) + G_2((F_2^{\lambda + \epsilon})'(x,\omega)) + \beta V(x,\omega) = \lambda + \epsilon,\quad (x,\omega)\in\mathbb{R}\times\Omega,
	\end{equation}
	we deduce that
	\begin{equation}\label{eq:baybu}
		a(x,\omega)\left[(F_2^{\lambda + \epsilon})''(x,\omega) - (F_2^\lambda)''(x,\omega)\right] + G_2((F_2^{\lambda + \epsilon})'(x,\omega)) - G_2((F_2^\lambda)'(x,\omega) + \delta) \ge \epsilon'
	\end{equation}
	for every $x\in\mathbb{R}$ and $\omega\in\Omega$.
	
	Let
	\begin{equation}\label{eq:huso}
		h_2^{\lambda,\epsilon}(x,\omega) = (F_2^{\lambda + \epsilon})'(x,\omega) - (F_2^\lambda)'(x,\omega)
	\end{equation}
	and note that $h_2^{\lambda,\epsilon}(x,\omega) \ge -G_2^{-1}(\lambda)$ by \eqref{eq:adamvis2}. If $-G_2^{-1}(\lambda) \le h_2^{\lambda,\epsilon}(x,\omega) \le \delta$, then
	\[ (h_2^{\lambda,\epsilon})'(x,\omega) \ge a(x,\omega)(h_2^{\lambda,\epsilon})'(x,\omega) \ge \epsilon' \]
	by \eqref{eq:baybu}. Hence, for every $x_1\in\mathbb{R}$, there exists an $x_2\in [x_1,x_1 + \frac{G_2^{-1}(\lambda) + \delta}{\epsilon'}]$ such that $h_2^{\lambda,\epsilon}(x,\omega) \ge \delta$ whenever $x\ge x_2$. Therefore, in fact, $h_2^{\lambda,\epsilon}(x,\omega) \ge \delta$ for every $x\in\mathbb{R}$. We recall \eqref{eq:ardil} and conclude that
	\[ \theta_2(\lambda + \epsilon) - \theta_2(\lambda) = \lim_{x\to+\infty}\frac1{x}\int_0^x h_2^{\lambda,\epsilon}(y,\omega)dy \ge \delta = (\epsilon - \epsilon')/{\tilde\kappa_2^\lambda}. \]
	Since $\epsilon'\in(0,\epsilon)$ is arbitrary, the desired inequality follows.
\end{proof}

\begin{lemma}\label{lem:tarkan}
	Assume \eqref{eq:Gcon}--\eqref{eq:aVcon1}. For every $\lambda\ge\beta$ and $\epsilon \in (0,1)$,
	\[ \theta_2(\lambda + \epsilon) - \theta_2(\lambda) \le (\tilde m_2^\lambda)^{-1}(\epsilon), \]
	where $(\tilde m_2^\lambda)^{-1}$ is a modulus of continuity for $G_2^{-1}$ on $[\lambda - \beta,\lambda + 1]$.
\end{lemma}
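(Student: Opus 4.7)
The plan is to mirror the structure of Lemma \ref{lem:sertbas} but drive the comparison from above instead of from below. I would begin by subtracting the static equation \eqref{eq:palasvis} for $F_2^\lambda$ from \eqref{eq:vpkonus} for $F_2^{\lambda+\epsilon}$, using the same difference
\[ h_2^{\lambda,\epsilon}(x,\omega) = (F_2^{\lambda + \epsilon})'(x,\omega) - (F_2^\lambda)'(x,\omega) \]
introduced in \eqref{eq:huso}, which I already know is strictly positive (in fact $\ge\delta$) by Lemma \ref{lem:sertbas}. This gives
\[ a(x,\omega)(h_2^{\lambda,\epsilon})'(x,\omega) + G_2((F_2^{\lambda+\epsilon})'(x,\omega)) - G_2((F_2^\lambda)'(x,\omega)) = \epsilon. \]
Because $(F_2^\lambda)'(x,\omega)$ and $(F_2^{\lambda+\epsilon})'(x,\omega)$ both lie in $[G_2^{-1}(\lambda - \beta),G_2^{-1}(\lambda + 1)]$ by \eqref{eq:adamvis2} (since $\epsilon<1$), the modulus property defining $\tilde m_2^\lambda$ (analogous to \eqref{eq:GLB}, just on the enlarged interval $[\lambda-\beta,\lambda+1]$) yields $G_2((F_2^\lambda)' + h_2^{\lambda,\epsilon}) - G_2((F_2^\lambda)') \ge \tilde m_2^\lambda(h_2^{\lambda,\epsilon})$, so
\[ a(x,\omega)(h_2^{\lambda,\epsilon})'(x,\omega) \le \epsilon - \tilde m_2^\lambda(h_2^{\lambda,\epsilon}(x,\omega)). \]

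Next I would fix $\eta>0$ and set $\delta = (\tilde m_2^\lambda)^{-1}(\epsilon) + \eta$. Using that $\tilde m_2^\lambda$ is strictly increasing, at any point where $h_2^{\lambda,\epsilon} \ge \delta$ the right-hand side is bounded above by $c_\eta := \epsilon - \tilde m_2^\lambda(\delta) < 0$, and since $a(\,\cdot\,,\omega) \in (0,1]$ this forces $(h_2^{\lambda,\epsilon})' \le c_\eta < 0$. Since $h_2^{\lambda,\epsilon}(\,\cdot\,,\omega)$ is uniformly bounded (again by \eqref{eq:adamvis2}, applied to both $\lambda$ and $\lambda+\epsilon$), it cannot stay above $\delta$ indefinitely as $x\to+\infty$, so there exists $x_0=x_0(\omega)$ with $h_2^{\lambda,\epsilon}(x_0,\omega)\le\delta$. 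A standard continuity/maximum argument shows that $h_2^{\lambda,\epsilon}$ then stays $\le\delta$ on $[x_0,+\infty)$: at any first recrossing point one would have $h_2^{\lambda,\epsilon} = \delta$ and $(h_2^{\lambda,\epsilon})'\ge 0$, contradicting the strict bound $(h_2^{\lambda,\epsilon})' \le c_\eta < 0$ there.

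Finally I would invoke \eqref{eq:ardil} for the pair $\lambda,\lambda+\epsilon$ to obtain
\[ \theta_2(\lambda+\epsilon) - \theta_2(\lambda) = \lim_{x\to+\infty}\frac1{x}\int_0^x h_2^{\lambda,\epsilon}(y,\omega)\,dy \le \delta = (\tilde m_2^\lambda)^{-1}(\epsilon) + \eta \]
for $\mathbb{P}$-a.e.\ $\omega$, where the Cesaro bound follows because $h_2^{\lambda,\epsilon}(\,\cdot\,,\omega)$ is bounded and is $\le\delta$ past $x_0(\omega)$. Letting $\eta\downarrow 0$ gives the claim. I do not anticipate any real obstacle; the only mildly delicate point is arguing that $h_2^{\lambda,\epsilon}$ cannot escape back above $\delta$ after first descending, and this is settled by the sign-definite differential inequality exactly as sketched.
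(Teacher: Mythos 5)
Your proof is correct and takes essentially the same approach as the paper: you derive the same sign-definite differential inequality for $h_2^{\lambda,\epsilon}$ from the two static equations and the modulus property of $\tilde m_2^\lambda$, use boundedness and the strictly negative derivative bound to force $h_2^{\lambda,\epsilon}\le\delta$ eventually, and then pass to the Cesaro/ergodic limit via \eqref{eq:ardil}. The only cosmetic difference is the parametrization: the paper sets $\delta=(\tilde m_2^\lambda)^{-1}(\epsilon+\epsilon')$ and (via a translation argument) shows $h_2^{\lambda,\epsilon}\le\delta$ on all of $\mathbb{R}$, whereas you set $\delta=(\tilde m_2^\lambda)^{-1}(\epsilon)+\eta$ and invoke the Cesaro average to discard the bounded segment $[0,x_0]$.
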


\begin{proof}
	For every $\lambda\ge\beta$, $\epsilon\in(0,1)$ and $\epsilon'\in(0,1-\epsilon)$, let $\delta = (\tilde m_2^\lambda)^{-1}(\epsilon + \epsilon')$ and note that
	\begin{align*}
	& a(x,\omega)(F_2^\lambda)''(x,\omega) + G_2((F_2^\lambda)'(x,\omega) + \delta) + \beta V(x,\omega)\\
	\ge\ &a(x,\omega)(F_2^\lambda)''(x,\omega) + G_2((F_2^\lambda)'(x,\omega)) + \beta V(x,\omega) + \epsilon + \epsilon' = \lambda + \epsilon + \epsilon',\quad (x,\omega)\in\mathbb{R}\times\Omega,
	\end{align*}
	by \eqref{eq:palasvis} and the following bounds due to \eqref{eq:adamvis2}:
	\[ \lambda - \beta \le G_2((F_2^\lambda)'(x,\omega)) < G_2((F_2^\lambda)'(x,\omega)) + \epsilon + \epsilon' < G_2((F_2^\lambda)'(x,\omega)) + 1 \le \lambda + 1. \]
	Comparing this inequality with $\eqref{eq:vpkonus}$, we deduce that
	\begin{equation}\label{eq:crank}
		a(x,\omega)\left[ (F_2^{\lambda + \epsilon})''(x,\omega) - (F_2^\lambda)''(x,\omega) \right] + G_2((F_2^{\lambda + \epsilon})'(x,\omega)) - G_2((F_2^\lambda)'(x,\omega) + \delta) \le -\epsilon'
	\end{equation}
	for every $x\in\mathbb{R}$ and $\omega\in\Omega$.
	
	Recall
	\[ h_2^{\lambda,\epsilon}(x,\omega) = (F_2^{\lambda + \epsilon})'(x,\omega) - (F_2^\lambda)'(x,\omega) \]
	from \eqref{eq:huso} and note that $h_2^{\lambda,\epsilon}(x,\omega) \le G_2^{-1}(\lambda + 1)$ by \eqref{eq:adamvis2}. If $\delta \le h_2^{\lambda,\epsilon}(x,\omega) \le G_2^{-1}(\lambda + 1)$, then
	\[ (h_2^{\lambda,\epsilon})'(x,\omega) \le a(x,\omega)(h_2^{\lambda,\epsilon})'(x,\omega) \le -\epsilon' \]
	by \eqref{eq:crank}. Hence, for every $x_1\in\mathbb{R}$, there exists an $x_2\in [x_1,x_1 + \frac{G_2^{-1}(\lambda + 1) - \delta}{\epsilon'}]$ such that $h_2^{\lambda,\epsilon}(x,\omega) \le \delta$ whenever $x\ge x_2$. Therefore, in fact, $h_2^{\lambda,\epsilon}(x,\omega) \le \delta$ for every $x\in\mathbb{R}$. We recall \eqref{eq:ardil} and conclude that	
	\[ \theta_2(\lambda + \epsilon) - \theta_2(\lambda) = \lim_{x\to+\infty}\frac1{x}\int_0^x h_2^{\lambda,\epsilon}(y,\omega)dy \le \delta = (\tilde m_2^\lambda)^{-1}(\epsilon + \epsilon').\]
	Since $\epsilon'\in(0,1 - \epsilon)$ is arbitrary, the desired inequality follows.
\end{proof}

\begin{proof}[Proof of Theorem \ref{thm:thetasOK}]
	For every $\lambda\ge\beta$ and $\omega\in\Omega$,
	\[ (F_2^\lambda)''(x,\omega) = \frac1{a(x,\omega)}(\lambda - G((F_2^\lambda)'(x,\omega)) - \beta V(x,\omega)),\quad x\in\mathbb{R}. \]
	Since $\mathbb{P}(V(0,\omega) \in (0,1)) > 0$, it follows (by a slight modification of the itemized argument in the proof of Lemma \ref{lem:kirbiryy}) that
	\[ \mathbb{P}\left( (F_2^\lambda)'(0,\omega) \in (G_2^{-1}(\lambda - \beta),G_2^{-1}(\lambda)) \right) > 0. \]
	Therefore, $\theta_2(\lambda) \in (G_2^{-1}(\lambda - \beta),G_2^{-1}(\lambda))$ by \eqref{eq:adamvis2}--\eqref{eq:thetacom}. In particular,
	\[ \lim_{\lambda\to+\infty}\theta_2(\lambda) = +\infty. \]
	We combine this limit with Lemmas \ref{lem:sertbas} and \ref{lem:tarkan} to deduce that $\theta_2:[\beta,+\infty) \to [\theta_2(\beta),+\infty)$ is a continuous and increasing bijection. Moreover, its inverse $\theta_2^{-1}$ satisfies
	\[ 0 < \theta_2^{-1}(\theta_2(\lambda) + \epsilon/{\tilde\kappa_2^\lambda}) - \theta_2^{-1}(\theta_2(\lambda)) \le \epsilon \]
	for every $\lambda\ge\beta$ and $\epsilon \in (0,1)$, so $\theta_2^{-1}$ is locally Lipschitz continuous on its domain. This concludes the proof of the desired results regarding $\theta_2$. The analogous ones regarding $\theta_1$ follow after suitable substitutions (see \eqref{eq:lazimol}).
\end{proof}

\section{Homogenization}\label{sec:homogenization}

\subsection{Viscosity solutions}\label{sub:viscosity}

In this subsection, we recall some basic definitions and record a couple of results regarding viscosity solutions. All statements are specialized to our setting and purposes. For general background on the theory of viscosity solutions of second-order partial differential equations and its applications, we refer the reader to \cite{CIL92,FleSon06}.

We consider a HJ equation of the form
\begin{equation}\label{eq:genelHJ}
\partial_tu = a(x)\partial_{xx}^2u + G(\partial_xu) + \beta V(x),\quad (t,x)\in(0,+\infty)\times\mathbb{R},
\end{equation}
where $\beta>0$, $G:\mathbb{R}\to[0,+\infty)$ and $a,V:\mathbb{R}\to[0,1]$. It covers both \eqref{eq:birhuzur} and \eqref{eq:efhuzur}.

\begin{definition}\label{def:vis}
	A function $v\in\Con((0,+\infty)\times\mathbb{R})$ is said to be a viscosity subsolution of \eqref{eq:genelHJ} if, for every $(t_0,x_0)\in(0,+\infty)\times\mathbb{R}$ and $\varphi\in\Con^2((0,+\infty)\times\mathbb{R})$ such that $v-\varphi$ attains a local maximum at $(t_0,x_0)$, the following inequality holds:
	\[ \partial_t\varphi(t_0,x_0) \le a(x_0)\partial_{xx}^2\varphi(t_0,x_0) + G(\partial_x\varphi(t_0,x_0)) + \beta V(x_0). \]
	Similarly, a function $w\in\Con((0,+\infty)\times\mathbb{R})$ is said to be a viscosity supersolution of \eqref{eq:genelHJ} if, for every $(t_0,x_0)\in(0,+\infty)\times\mathbb{R}$ and $\varphi\in\Con^2((0,+\infty)\times\mathbb{R})$ such that $w-\varphi$ attains a local minimum at $(t_0,x_0)$, the following inequality holds:
	\[ \partial_t\varphi(t_0,x_0) \ge a(x_0)\partial_{xx}^2\varphi(t_0,x_0) + G(\partial_x\varphi(t_0,x_0)) + \beta V(x_0). \]
	Finally, a function $u\in\Con((0,+\infty)\times\mathbb{R})$ is said to be a viscosity solution of \eqref{eq:genelHJ} if it is both a viscosity subsolution and a viscosity supersolution of this equation.
\end{definition}

In the rest of this section, we repeatedly use the following comparison principle. It is covered by, e.g., \cite[Proposition 2.3]{DK17} which is a generalization of \cite[Proposition 1.4]{D19}.

\begin{proposition}\label{prop:comp}
	Assume $G\in\Con(\mathbb{R})$, $\sqrt{a}\in\Lip(\mathbb{R})$ and $V\in\UC(\mathbb{R})$. Let $v\in\UC([0,+\infty)\times\mathbb{R})$ and $w\in\UC([0,+\infty)\times\mathbb{R})$ be, respectively, a viscosity subsolution and a viscosity supersolution of \eqref{eq:genelHJ}.
	If $\{ v(t,\,\cdot\,):\,t\in[0,+\infty) \}$ is an equi-Lipschitz continuous family of functions, i.e.,
	\[ \text{there exists an $\ell>0$ such that}\ |v(t,x) - v(t,y)| \le \ell|x-y|\ \ \text{for all}\ t\in[0,+\infty)\ \text{and}\ x,y\in\mathbb{R}, \]
	or $\{ w(t,\,\cdot\,):\,t\in[0,+\infty) \}$ is an equi-Lipschitz continuous family of functions, then
	\[ \sup\{ v(t,x) - w(t,x):\,(t,x)\in[0,+\infty)\times\mathbb{R} \} = \sup\{ v(0,x) - w(0,x):\,x\in\mathbb{R} \}. \] 
\end{proposition}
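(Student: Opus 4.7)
The natural approach is the doubling-of-variables technique from viscosity solution theory, combined with a linear-growth penalization to handle the unbounded spatial domain. Write $M:=\sup\{v(t,x)-w(t,x):(t,x)\in[0,+\infty)\times\mathbb{R}\}$ and $M_0:=\sup\{v(0,x)-w(0,x):x\in\mathbb{R}\}$, and (without loss of generality) assume $v$ is the equi-Lipschitz one, with constant $\ell$. The inequality $M_0\le M$ is trivial, so argue by contradiction: suppose $M>M_0$. Subtracting an initial penalization $\gamma t+\eta(1+x^2)^{1/2}$ for small $\gamma,\eta>0$, the equi-Lipschitz bound on $v$ and the at-most-linear growth of $w$ (inherited from uniform continuity of $w(0,\cdot)$, continuity of $G$ on bounded sets, and boundedness of $\beta V$) guarantee that the penalized difference still exceeds $M_0$ at some point and that its supremum is attained in $(0,+\infty)\times\mathbb{R}$.

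\textbf{Doubling and Crandall--Ishii.} For large $\alpha>0$, form
\[ \Phi_\alpha(t,s,x,y)=v(t,x)-w(s,y)-\alpha(t-s)^2-\alpha(x-y)^2-\gamma t-\eta(1+x^2)^{1/2}, \]
and let $(\hat t,\hat s,\hat x,\hat y)$ denote a maximizer (suppressing $\alpha$-dependence). Standard estimates yield $\alpha(\hat t-\hat s)^2,\alpha(\hat x-\hat y)^2\to 0$ as $\alpha\to\infty$, and $\hat t,\hat s>0$ for $\alpha$ large. The Crandall--Ishii maximum principle for semicontinuous functions \cite{CIL92} then supplies $X_\alpha,Y_\alpha\in\mathbb{R}$ and subjet/superjet data: time components whose difference equals $\gamma$, gradient components $q^x_\alpha,q^y_\alpha$ with $|q^x_\alpha-q^y_\alpha|\le\eta$, and the matrix bound
\[ \begin{pmatrix} X_\alpha & 0 \\ 0 & -Y_\alpha \end{pmatrix}\le 2\alpha\begin{pmatrix} 1 & -1 \\ -1 & 1 \end{pmatrix}+O(\eta). \]

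\textbf{Combining and the key estimate.} Plugging these data into the viscosity sub- and supersolution inequalities and subtracting gives
\[ \gamma\le \bigl[a(\hat x)X_\alpha-a(\hat y)Y_\alpha\bigr]+\bigl[G(q^x_\alpha)-G(q^y_\alpha)\bigr]+\beta\bigl[V(\hat x)-V(\hat y)\bigr]. \]
The gradient components satisfy $|q^x_\alpha|,|q^y_\alpha|\le\ell+O(\eta)$ thanks to the equi-Lipschitz hypothesis on $v$, so continuity of $G$ on a compact interval controls the Hamiltonian difference as $\eta\to 0$; uniform continuity of $V$ combined with $\hat x-\hat y\to 0$ disposes of the potential term. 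The crux --- and the precise reason for the hypothesis $\sqrt a\in\Lip(\mathbb{R})$ rather than merely $a\in\Lip(\mathbb{R})$ --- is bounding the diffusion term. Writing $a=\sigma^2$ with $\sigma=\sqrt a$ and pairing the matrix inequality against the vector $(\sigma(\hat x),\sigma(\hat y))$ yields
\[ a(\hat x)X_\alpha-a(\hat y)Y_\alpha\le 2\alpha\bigl(\sigma(\hat x)-\sigma(\hat y)\bigr)^2+O(\eta)\le 2\,\Lip(\sigma)^2\cdot\alpha(\hat x-\hat y)^2+O(\eta)=O(1), \]
using that $\alpha(\hat x-\hat y)^2$ remains bounded. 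Sending $\alpha\to\infty$, then $\eta\to 0$, then $\gamma\to 0$ in that order produces the contradiction $0<\gamma\le 0$. The principal obstacles are exactly this diffusion-term estimate (resolved by the matrix-vector pairing available only because $\sqrt a\in\Lip$) and the handling of the unbounded domain (resolved by the linear-growth penalization $\eta(1+x^2)^{1/2}$, whose derivative is bounded uniformly by $\eta$).
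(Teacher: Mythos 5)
The paper does not actually prove Proposition~\ref{prop:comp}: it is cited as an instance of \cite[Proposition 2.3]{DK17}, which in turn generalizes \cite[Proposition 1.4]{D19}. So there is no in-paper proof to compare against, and I can only assess your sketch directly.

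Your doubling-of-variables argument via the Crandall--Ishii lemma is the canonical route, and it correctly identifies the two load-bearing hypotheses. The equi-Lipschitz assumption on $v$ (resp.\ $w$) is used exactly where you say: at the maximizer, the spatial-gradient components of the subjet/superjet are confined to $[-\ell-\eta,\ell+\eta]$, so the mere \emph{continuity} of $G$ (not Lipschitzness on all of $\mathbb{R}$) controls the Hamiltonian difference. And $\sqrt a\in\Lip(\mathbb{R})$ is exactly what makes the diffusion term tractable: testing the Crandall--Ishii matrix inequality against $(\sigma(\hat x),\sigma(\hat y))$ with $\sigma=\sqrt a$ gives $a(\hat x)X_\alpha - a(\hat y)Y_\alpha \le C\alpha\bigl(\sigma(\hat x)-\sigma(\hat y)\bigr)^2 \le C\,\Lip(\sigma)^2\,\alpha(\hat x-\hat y)^2\to 0$. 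This is almost certainly the structure of the cited proof.

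One step does need tightening. You claim the linear penalization $\eta(1+x^2)^{1/2}$ forces the supremum of $\Phi_\alpha$ to be attained, citing the linear growth of $v$ and $w$. But a small $\eta$-multiple of a linearly growing function does not dominate a difference $v-w$ that may itself grow linearly; the penalization only provides coercivity once one already knows $v-w$ is bounded above on the set under consideration. The standard fix is to first observe, from $v,w\in\UC([0,+\infty)\times\mathbb{R})$ and subadditivity of moduli of continuity, that $v(t,x)-w(t,x)\le M_0 + C(1+t)$, then establish the comparison on strips $[0,T]\times\mathbb{R}$ (where $v-w$ is bounded above and the penalization is coercive), and finally let $T\to+\infty$. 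With that intermediate localization inserted, your argument is sound.
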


The statement of Corollary \ref{cor:velinim} involves the following notion.

\begin{definition}\label{def:well}
We say that the Cauchy problem for \eqref{eq:genelHJ} is well-posed in $\UC([0,+\infty)\times\mathbb{R})$ if the following hold.
	\begin{itemize}
		\item [(i)] \underline{Existence:} For every $g\in\UC(\mathbb{R})$, \eqref{eq:genelHJ} has a viscosity solution $u\in\UC([0,+\infty)\times\mathbb{R})$ such that $u(0,\,\cdot\,) = g(\,\cdot\,)$ on $\mathbb{R}$;
		\item [(ii)] \underline{Stability:} If $u_1,u_2\in\UC([0,+\infty)\times\mathbb{R})$ are viscosity solutions of \eqref{eq:genelHJ}, then
		\[ \sup\{ |u_1(t,x) - u_2(t,x)|:\,(t,x)\in[0,+\infty)\times\mathbb{R} \} = \sup\{ |u_1(0,x) - u_2(0,x)|:\,x\in\mathbb{R} \}. \]
\end{itemize}
\end{definition}

The following result provides sufficient conditions (which are stronger versions of \eqref{eq:Gcon} and \eqref{eq:aVcon2}) for the validity of assumptions  \eqref{eq:existunique}, \eqref{eq:duzlip} and \eqref{eq:well} in Theorem \ref{thm:homlin} and Corollary \ref{cor:velinim}. It is an instance of \cite[Theorem 2.8]{DK17} whose proof is based on \cite[Theorem 3.2]{D19}.

\begin{theorem}\label{thm:suffcon}
	The Cauchy problem for \eqref{eq:birhuzur} is well-posed in $\UC([0,+\infty)\times\mathbb{R})$ for every $\omega\in\Omega$ if $G:\mathbb{R}\to[0,+\infty)$, $a(\,\cdot\,,\omega):\mathbb{R}\to(0,1]$ and $V(\,\cdot\,,\omega):\mathbb{R}\to[0,1]$ satisfy the following conditions:
	\begin{align*}
	&\text{there exist $c_1,c_2 > 0$ and $\gamma > 1$ such that}\ c_1|p|^\gamma - \frac1{c_1} \le G(p) \le c_2(|p|^\gamma + 1)\ \ \text{and}\\
	&|G(p) - G(q)| \le c_2 (|p| + |q| + 1)^{\gamma - 1}|p - q|\ \ \text{for every}\ p,q\in\mathbb{R};\\
	&\text{$\sqrt{a(\,\cdot\,,\omega)}$ and $V(\,\cdot\,,\omega)$ are in $\Lip(\mathbb{R})$ for every $\omega\in\Omega$.}
	\end{align*}
	Moreover, under these conditions, for every $\omega\in\Omega$ and $\theta\in\mathbb{R}$, the unique viscosity solution $u_\theta(\,\cdot\,,\,\cdot\,,\omega)$ of \eqref{eq:birhuzur} with the initial condition $u_\theta(0,x,\omega) = \theta x$, $x\in\mathbb{R}$, is in $\Lip([0,+\infty)\times\mathbb{R})$ with a Lipschitz constant that does not depend on $\omega$.
\end{theorem}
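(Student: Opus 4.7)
This theorem is a specialization to our separable Hamiltonian of \cite[Theorem 2.8]{DK17}, whose proof rests on \cite[Theorem 3.2]{D19}. My plan is to follow that template, checking at each step that our two-sided $|p|^\gamma$ bound on $G$ and the $(\gamma{-}1)$-local Lipschitz bound, together with the Lipschitz regularity of $\sqrt{a(\,\cdot\,,\omega)}$ and $V(\,\cdot\,,\omega)$ (uniformly in $\omega$), feed into the three main blocks: construction of a viscosity solution, uniqueness/stability, and a uniform-in-$\omega$ Lipschitz estimate for linear initial data.

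For \emph{well-posedness in $\UC([0,+\infty)\times\mathbb R)$}, I would first approximate any $g\in\UC(\mathbb R)$ by Lipschitz functions $g_n$, build a viscosity solution $u_n$ for each $g_n$ via Perron's method (using barriers of the form $g_n(x)\pm M_n t$ with $M_n$ chosen larger than $\sup\{G(p):|p|\le\Lip(g_n)\}+\beta$), and pass to the limit $u_n\to u$. Uniqueness and stability follow from the comparison principle of Proposition \ref{prop:comp}, once I verify that the approximating solutions $u_n$ are equi-Lipschitz in $x$ (so that the hypothesis of that proposition is met); the equi-Lipschitz property itself follows by sandwiching $u_n(t,x)$ between $u_n(0,x\pm h)+M_n t$ and using the Lipschitz bound on $g_n$.

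For the \emph{uniform Lipschitz bound} with linear initial data $u_\theta(0,x,\omega)=\theta x$, the time Lipschitz bound is straightforward: the explicit functions $\theta x+G(\theta)t$ and $\theta x+(G(\theta)+\beta)t$ are respectively a sub- and supersolution of \eqref{eq:birhuzur} (since $V(x,\omega)\in[0,1]$ and their second $x$-derivatives vanish), so Proposition \ref{prop:comp} sandwiches $u_\theta$ between them; applying the same comparison to the time-translate $u_\theta(t+h,\cdot,\omega)$ then yields $|u_\theta(t+h,x,\omega)-u_\theta(t,x,\omega)|\le(G(\theta)+\beta)h$, independently of $\omega$. The spatial Lipschitz bound is the main obstacle, since the equation is not translation invariant in $x$ and a direct comparison of $u_\theta(t,x+h,\omega)$ with $u_\theta(t,x,\omega)$ is unavailable. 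This is precisely where the hypotheses on $G$ (the $(\gamma{-}1)$-local Lipschitz estimate) and on $\sqrt a$, $V$ (Lipschitz, uniformly in $\omega$) come into play: a doubling-variables argument applied to $(t,x,y)\mapsto u_\theta(t,x,\omega)-u_\theta(t,y,\omega)-\ell|x-y|$ produces an inequality at the maximum point which, using these uniform Lipschitz constants, forces $\ell$ to be bounded by a constant depending only on $\theta,\gamma,c_1,c_2,\beta$ and on the Lipschitz constants of $\sqrt{a(\,\cdot\,,\omega)}$ and $V(\,\cdot\,,\omega)$, but not on $\omega$ itself.
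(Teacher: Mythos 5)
The paper gives no proof of Theorem \ref{thm:suffcon}: it is invoked as an instance of \cite[Theorem 2.8]{DK17}, whose proof is based on \cite[Theorem 3.2]{D19}. Your proposal is therefore a reconstruction of an argument the paper omits, and your general scaffold --- Perron's method with barriers, a comparison principle for uniqueness and stability, explicit sub/supersolutions for the time modulus, and an a priori spatial gradient estimate --- is the right shape. The sandwich $\theta x + G(\theta)t \le u_\theta \le \theta x + (G(\theta)+\beta)t$ is correct.

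There are, however, two gaps, the second of which is serious. First, the logical order between the space and time Lipschitz bounds is reversed. To pass from the sandwich to $|u_\theta(t+h,x,\omega)-u_\theta(t,x,\omega)| \le (G(\theta)+\beta)h$ you apply Proposition \ref{prop:comp} to $u_\theta(\cdot+h,\cdot,\omega)$ and $u_\theta(\cdot,\cdot,\omega)+(G(\theta)+\beta)h$, but that comparison principle demands that one of the two be equi-Lipschitz in $x$ --- which is precisely the spatial bound you have not yet established. (The initial comparison with $\theta x \pm c\,t$ is fine because those barriers are Lipschitz.) Likewise, the equi-Lipschitz-in-$x$ claim for $u_n$ cannot follow from sandwiching $u_n(t,x)$ between $g_n(x\pm h)\pm M_n t$: the equation is not translation invariant in $x$, and in any case that sandwich yields a modulus of the form $L_n|h|+2M_n t$, which degrades in $t$ and is not an equi-Lipschitz bound. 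The space bound must come first, and it is where the real work lies.

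Second, and more substantively, the theorem asserts a Lipschitz constant for $u_\theta$ that is \emph{independent of $\omega$}. Your doubling-variables sketch produces a constant depending on $\Lip(\sqrt{a(\,\cdot\,,\omega)})$ and $\Lip(V(\,\cdot\,,\omega))$. But the hypotheses only assert $\sqrt{a(\,\cdot\,,\omega)},V(\,\cdot\,,\omega)\in\Lip(\mathbb{R})$ \emph{for each} $\omega$; no $\omega$-uniform Lipschitz constants for $\sqrt{a}$ or $V$ are assumed. A bound built from these quantities would therefore in general vary with $\omega$, which is strictly weaker than what the theorem claims. The point of the gradient estimate in the cited references is that it is obtained from the coercivity and two-sided power growth of $G$ (through $c_1,c_2,\gamma$), the uniform bounds $a\in(0,1]$, $V\in[0,1]$, $\beta$, and $|\theta|$ --- all $\omega$-independent --- while the Lipschitz regularity of $\sqrt{a}$ and $V$ enters only through the comparison principle and well-posedness, not through the size of the gradient bound. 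An Ishii--Lions/doubling argument that explicitly uses $\Lip(\sqrt{a(\,\cdot\,,\omega)})$ and $\Lip(V(\,\cdot\,,\omega))$ would not recover the claimed $\omega$-independence; a different mechanism for the gradient estimate is needed.
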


When the Cauchy problem for \eqref{eq:birhuzur} is well-posed in $\UC([0,+\infty)\times\mathbb{R})$ for every $\omega\in\Omega$, so is the Cauchy problem for \eqref{eq:asilhuzur} with an arbitrary $\epsilon > 0$. This is simply because $(t,x)\mapsto u(t,x,\omega)$ is a solution of \eqref{eq:birhuzur} if and only if $(t,x)\mapsto \epsilon u\left(\frac{t}{\epsilon},\frac{x}{\epsilon},\omega\right)$ is a solution of \eqref{eq:asilhuzur}.

\subsection{Locally uniform convergence for each $\theta\notin(\theta_1(\beta),\theta_2(\beta))$}\label{sub:nonflat}


\begin{lemma}\label{lem:equi}
	Assume \eqref{eq:stat} and \eqref{eq:existunique}. For every $\theta\in\mathbb{R}$, there exists an $\Omega_{\mathrm{ue}}^\theta\in\mathcal{F}$ with $\mathbb{P}(\Omega_{\mathrm{ue}}^\theta) = 1$ such that
	$\{ u_\theta^\epsilon(t,\,\cdot\,,\omega):\,\epsilon\in(0,1],\ t\in[0,+\infty),\ \omega\in\Omega_{\mathrm{ue}}^\theta \}$ is a uniformly equicontinuous family of functions.
\end{lemma}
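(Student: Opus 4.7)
My plan is to establish a spatial Lipschitz bound on $u_\theta$ that is uniform in $t\ge 0$ and in $\omega$ belonging to a full-measure set $\Omega_{\mathrm{ue}}^\theta$, which immediately yields the uniform equicontinuity claim by rescaling. Indeed, if $|u_\theta(t,x,\omega) - u_\theta(t,y,\omega)| \le \ell_\theta\,|x-y|$ for every $t\ge 0$, $x,y\in\mathbb{R}$, and $\omega\in\Omega_{\mathrm{ue}}^\theta$, then the identity $u_\theta^\epsilon(t,x,\omega) = \epsilon\,u_\theta(t/\epsilon, x/\epsilon, \omega)$ gives
\[
|u_\theta^\epsilon(t,x,\omega) - u_\theta^\epsilon(t,y,\omega)| = \epsilon\,\bigl|u_\theta(t/\epsilon, x/\epsilon, \omega) - u_\theta(t/\epsilon, y/\epsilon, \omega)\bigr| \le \ell_\theta\,|x-y|
\]
uniformly in $\epsilon\in(0,1]$, $t\ge 0$, and $\omega\in\Omega_{\mathrm{ue}}^\theta$, so the family is equi-Lipschitz with constant $\ell_\theta$ and in particular uniformly equicontinuous with modulus $\rho(r) = \ell_\theta\,r$.

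The natural candidate for $\ell_\theta$ comes from the coercivity of $G$: I would fix $L_\theta > 0$ large enough that $G(p) > G(\theta) + \beta$ for all $|p| > L_\theta$, and set $\ell_\theta := \max\{|\theta|,\,L_\theta\}$. The initial data $x \mapsto \theta x$ is Lipschitz with constant $|\theta|\le\ell_\theta$, and I would propagate this bound to positive times by a doubling-of-variables argument applied to the auxiliary functional
\[
\Phi_\alpha(t,x,y) := u_\theta(t,x,\omega) - u_\theta(t,y,\omega) - \ell_\theta\,|x-y| - \gamma t - \alpha(x^2+y^2),
\]
where $\alpha > 0$ is small and $\gamma > 0$ is chosen using the a priori growth bound $\theta x + tG(\theta) \le u_\theta(t,x,\omega) \le \theta x + t(G(\theta) + \beta)$ (obtained by comparison with the classical sub- and super-solutions $\theta x + tG(\theta)$ and $\theta x + t(G(\theta) + \beta)$) so that the quadratic penalty localizes any supremum of $\Phi_\alpha$ at an interior point $(t^{\ast},x^{\ast},y^{\ast})$ with $x^{\ast}\ne y^{\ast}$. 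At such a point the Crandall--Ishii lemma produces matched subjets and superjets for $u_\theta$ at the two spatial points whose gradient variables approach $\ell_\theta\,\operatorname{sgn}(x^{\ast}-y^{\ast})$ of magnitude $\ge L_\theta$ as $\alpha\to 0$; subtracting the resulting viscosity sub- and super-solution inequalities then produces a contradiction from $G(p) > G(\theta) + \beta$ together with $V(\,\cdot\,,\omega)\in[0,1]$, forcing the supremum to be nonpositive and delivering the claimed Lipschitz bound.

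The main obstacle is the careful handling of the second-order diffusion term $a(x,\omega)\,\partial_{xx}^2 u$ in the Crandall--Ishii matrix inequality; this is exactly where the Lipschitz regularity of $\sqrt{a(\,\cdot\,,\omega)}$ granted by \eqref{eq:aVcon2} (the same hypothesis underlying Proposition \ref{prop:comp}) is used to bound $|a(x^{\ast},\omega) - a(y^{\ast},\omega)|$ by a multiple of $|x^{\ast}-y^{\ast}|$, which vanishes in the limit $\alpha\to 0$ together with the penalty term. The set $\Omega_{\mathrm{ue}}^\theta$ can then be taken as the full-measure set of $\omega$ on which both the comparison principle and the crude sub-/super-solution bounds hold; since the structural data $G$, $\beta$, and the range of $V$ are deterministic, the Lipschitz constant $\ell_\theta$ emerging from the argument depends only on $\theta$ and the PDE parameters, not on $\omega$.
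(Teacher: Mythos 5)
There is a genuine gap, and it is twofold. First, the lemma is stated under only \eqref{eq:stat} and \eqref{eq:existunique}, and the paper proves it at that level of generality; your argument instead invokes coercivity of $G$, the bounds $V\in[0,1]$, the comparison principle of Proposition \ref{prop:comp}, and \eqref{eq:aVcon2}, and it effectively tries to establish an $\omega$-independent version of \eqref{eq:duzlip}. But the paper deliberately keeps \eqref{eq:duzlip} as a separate hypothesis, verified only under the strictly stronger conditions of Theorem \ref{thm:suffcon} (in particular $V(\,\cdot\,,\omega)\in\Lip(\mathbb{R})$, via the Bernstein-type gradient estimates of the cited works); with $V$ merely uniformly continuous, a time-uniform spatial Lipschitz bound with constant depending only on $\theta,G,\beta$ is not available, and your two mechanisms for producing it do not work. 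The inviscid heuristic behind your choice of $L_\theta$ (namely $G(\partial_x u)\le \partial_t u-\beta V\le G(\theta)+\beta$, hence $|\partial_x u|\le L_\theta$) fails here because the viscous term $a\,\partial_{xx}^2u$ has no sign. And in the doubling argument the coupling is the cone $\ell_\theta|x-y|$, so the two gradient arguments at $(x^\ast,y^\ast)$ differ only by $O(\alpha(|x^\ast|+|y^\ast|))$: when you subtract the sub- and supersolution inequalities the $G$-terms essentially cancel, no term of the form $G(p)$ with $|p|\ge L_\theta$ survives to be compared with $G(\theta)+\beta$, and all you can conclude is $\gamma\le\beta+o(1)$. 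Since your penalization needs $\gamma>\beta$ to control the growth $u_\theta\le\theta x+t(G(\theta)+\beta)$ on the unbounded time interval, the resulting estimate is $u_\theta(t,x,\omega)-u_\theta(t,y,\omega)\le\ell_\theta|x-y|+\gamma t$, which degrades linearly in $t$ and gives nothing uniform in $t$ (hence nothing after the rescaling $u^\epsilon_\theta(t,x,\omega)=\epsilon u_\theta(t/\epsilon,x/\epsilon,\omega)$ as $\epsilon\to0$).

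Second, you never use stationarity \eqref{eq:stat}, which is the actual engine of the lemma. The paper's proof is soft: set $m_\theta(\delta,\omega)=\sup_{t\ge0}\sup_{|x-y|\le\delta}|u_\theta(t,x,\omega)-u_\theta(t,y,\omega)|$, which is a finite modulus by \eqref{eq:existunique}; uniqueness plus stationarity give $m_\theta(\delta,\omega)=m_\theta(\delta,\tau_z\omega)$, so ergodicity yields a deterministic modulus $\overline m_\theta$ on a full-measure set $\Omega_{\mathrm{ue}}^\theta$; and the rescaling is handled without any Lipschitz bound by subadditivity of the modulus, taking $k=\lceil 1/\epsilon\rceil$ so that $\epsilon\,|u_\theta(t/\epsilon,x/\epsilon,\omega)-u_\theta(t/\epsilon,y/\epsilon,\omega)|\le\epsilon k\,\overline m_\theta\bigl(|x-y|/(\epsilon k)\bigr)\le2\,\overline m_\theta(|x-y|)$. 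If you want to salvage your route, you would have to work under the hypotheses of Theorem \ref{thm:suffcon} (so the Lipschitz bound is imported, not proved by the sketched doubling argument), but that proves a weaker statement than the lemma as stated.
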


\begin{proof}	
	Fix any $\theta\in\mathbb{R}$. For every $\omega\in\Omega$, the function $m_\theta(\,\cdot\,,\omega):[0,+\infty)\to[0,+\infty)$, defined by
	\[ m_\theta(\delta,\omega) = \sup_{t\in[0,+\infty)}\sup_{|x-y|\le\delta}|u_\theta(t,x,\omega) - u_\theta(t,y,\omega)|, \]
	is uniformly continuous by \eqref{eq:existunique}. For every $z\in\mathbb{R}$, $m_\theta(\delta,\omega) = m_\theta(\delta,\tau_z\omega)$ by \eqref{eq:stat}, \eqref{eq:existunique} and the observation that
	\[ u_\theta(t,x+z,\omega) - u_\theta(t,y+z,\omega) = u_\theta(t,x,\tau_z\omega) - u_\theta(t,y,\tau_z\omega) \]
	for all $t\in[0,+\infty)$ and $x,y\in\mathbb{R}$. Therefore, by ergodicity (and the countability of $\mathbb{Q}$), there exists an $\Omega_{\mathrm{ue}}^\theta\in\mathcal{F}$ with $\mathbb{P}(\Omega_{\mathrm{ue}}^\theta) = 1$ and a function $\overline m_\theta:[0,+\infty)\cap\mathbb{Q}\to[0,+\infty)$ such that $m_\theta(\delta,\omega) = \overline m_\theta(\delta)$ for all $\delta\in[0,+\infty)\cap\mathbb{Q}$ and $\omega\in\Omega_{\mathrm{ue}}^\theta$. It follows that $\overline m_\theta$ is uniformly continuous on its domain and the uniformly continuous extension of $\overline m_\theta$ to $[0,+\infty)$ (still denoted by $\overline m_\theta$) satisfies
	\[ |u_\theta(t,x,\omega) - u_\theta(t,y,\omega)| \le m_\theta(|x-y|,\omega) = \overline m_\theta(|x-y|) \]
	for all $t\in[0,+\infty)$, $x,y\in\mathbb{R}$ and $\omega\in\Omega_{\mathrm{ue}}^\theta$. Finally, for every $\epsilon\in(0,1]$, by letting $k = \lceil \frac1{\epsilon} \rceil$ and noting that $\frac1{\epsilon} \le k < \frac1{\epsilon} + 1 \le \frac{2}{\epsilon}$, we obtain the following inequality:
	\begin{align*}
		|u_\theta^\epsilon(t,x,\omega) - u_\theta^\epsilon(t,y,\omega)| &= \epsilon \left| u_\theta\left(\frac{t}{\epsilon},\frac{x}{\epsilon},\omega\right) - u_\theta\left(\frac{t}{\epsilon},\frac{y}{\epsilon},\omega\right) \right|\\
		&\le \epsilon k \overline m_\theta\left( \frac{|x-y|}{\epsilon k} \right) \le 2\overline m_\theta(|x-y|).\qedhere
	\end{align*}
\end{proof}

\begin{lemma}\label{lem:isbu}
	Assume \eqref{eq:Gcon}--\eqref{eq:fruit} and \eqref{eq:existunique}--\eqref{eq:aVcon2}.
	\begin{itemize}
		\item [(a)]	For every $\theta\in(-\infty,\theta_1(\beta)]$, there exists an $\Omega_0^\theta\in\mathcal{F}$ with $\mathbb{P}(\Omega_0^\theta) = 1$ such that, for every $\omega\in\Omega_0^\theta$ and $T,L>0$,
		\[ \lim_{\epsilon\to0}\sup_{t\in[0,T]}\sup_{x\in[-L,L]} |u_\theta^\epsilon(t,x,\omega) - t\theta_1^{-1}(\theta) - \theta x| = 0. \]
		\item [(b)] For every $\theta\in[\theta_2(\beta),+\infty)$, there exists an $\Omega_0^\theta\in\mathcal{F}$ with $\mathbb{P}(\Omega_0^\theta) = 1$ such that, for every $\omega\in\Omega_0^\theta$ and $T,L>0$,
		\[ \lim_{\epsilon\to0}\sup_{t\in[0,T]}\sup_{x\in[-L,L]} |u_\theta^\epsilon(t,x,\omega) - t\theta_2^{-1}(\theta) - \theta x| = 0. \]
	\end{itemize}
\end{lemma}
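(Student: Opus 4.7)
The plan is to prove part~(b); part~(a) then follows by the reflection substitution \eqref{eq:lazimol}, which interchanges the roles of $F_1^\lambda$ and $F_2^\lambda$. Fix $\theta \ge \theta_2(\beta)$, set $\lambda := \theta_2^{-1}(\theta) \ge \beta$, and use the corrector $F_2^\lambda$ from Theorem \ref{thm:staticHJ}. The candidate limit is delivered by the classical solution $v^\epsilon(t,x,\omega) := t\lambda + \epsilon F_2^\lambda(x/\epsilon,\omega)$ of \eqref{eq:asilhuzur}, whose spatial derivative $(F_2^\lambda)'(x/\epsilon,\omega) \in [G_2^{-1}(\lambda-\beta),G_2^{-1}(\lambda)]$ is bounded uniformly in $\epsilon$; hence $\{v^\epsilon(t,\,\cdot\,,\omega)\}$ is equi-Lipschitz in $x$ and Proposition \ref{prop:comp} is available with $v^\epsilon$ playing the equi-Lipschitz role. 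The Birkhoff ergodic theorem applied to the bounded stationary field $(F_2^\lambda)'$, in the spirit of \eqref{eq:ardil}, together with a variant of Lemma \ref{lem:equi} applied with $F_2^\lambda$ in place of $u_\theta$, produces a full-measure subset $\Omega_0^\theta \subset \Omega_{\mathrm{ue}}^\theta$ on which both $\{u_\theta^\epsilon(t,\,\cdot\,,\omega)\}$ is uniformly equicontinuous and $\epsilon F_2^\lambda(x/\epsilon,\omega) \to \theta x$ locally uniformly in $x$.

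The main obstacle is that a direct application of Proposition \ref{prop:comp} yields only
\[
\sup_{[0,\infty)\times\mathbb{R}}(u_\theta^\epsilon - v^\epsilon)\ =\ \sup_{x\in\mathbb{R}}\bigl(\theta x - \epsilon F_2^\lambda(x/\epsilon,\omega)\bigr),
\]
and the supremum on the right can be $+\infty$: the sublinearized corrector $\psi(y,\omega) := F_2^\lambda(y,\omega) - \theta y$ is only guaranteed to be $o(|y|)$ and need not be bounded in the general stationary \& ergodic setting. To circumvent this, the plan is to exploit the translation structure of $F_2^\lambda$ built into Theorem \ref{thm:staticHJ}, namely $F_2^\lambda(x,\omega) - F_2^\lambda(y,\omega) = F_2^\lambda(x-y,\tau_y\omega)$, which produces the one-parameter family of classical solutions
\[
v^\epsilon_y(t,x,\omega)\ :=\ t\lambda\,+\,\epsilon\bigl[F_2^\lambda(x/\epsilon,\omega) - F_2^\lambda(y/\epsilon,\omega)\bigr]\,+\,\theta y
\]
of \eqref{eq:asilhuzur}, each matching $u_\theta^\epsilon$ exactly at $(0,y)$ and inheriting the same equi-Lipschitz bound as $v^\epsilon$.

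The upper estimate is then obtained by comparing $u_\theta^\epsilon$ against $v^\epsilon_{y_\epsilon}$ for a shift $y_\epsilon = y_\epsilon(\omega)$ chosen so that $\psi(y_\epsilon/\epsilon,\omega)$ approximates the infimum of $\psi(\,\cdot\,,\omega)$ over a slowly growing window $\{|z| \le R_\epsilon\}$; the matching lower estimate is symmetric, using the supremum of $\psi$ on the same window. Combining these with the locally uniform convergence $\epsilon F_2^\lambda(x/\epsilon,\omega)\to\theta x$ and with Lemma \ref{lem:equi} to upgrade pointwise to locally uniform convergence yields $|u_\theta^\epsilon(t,x,\omega) - t\lambda - \theta x| \to 0$ uniformly on $[0,T]\times[-L,L]$ for every $T,L>0$ and every $\omega\in\Omega_0^\theta$, which is exactly the asserted conclusion. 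The hard part will be to tune $R_\epsilon$ and $y_\epsilon$ jointly so that $\epsilon\,\mathrm{osc}_{[-R_\epsilon,R_\epsilon]}\psi(\,\cdot\,,\omega)\to 0$ for $\omega$ in a full-measure set, which is a quantitative ergodic statement that should follow from the locally uniform Birkhoff convergence on compacts via a standard diagonal extraction.
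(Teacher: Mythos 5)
Your setup (working with $F_2^\lambda$, $\lambda=\theta_2^{-1}(\theta)$, the solution $t\lambda+\epsilon F_2^\lambda(x/\epsilon,\omega)$, sublinearity via Birkhoff, reflection for part (a)) matches the paper, and you correctly identify the central obstacle: the corrector $\psi(y,\omega)=F_2^\lambda(y,\omega)-\theta y$ is only $o(|y|)$, so a naive application of Proposition \ref{prop:comp} produces an initial-data supremum that may be $+\infty$. However, your proposed fix does not remove this obstacle. Proposition \ref{prop:comp} is a \emph{global} comparison: the conclusion involves $\sup_{x\in\mathbb{R}}$ of the initial difference, so to compare $u_\theta^\epsilon$ with the shifted solution $v^\epsilon_{y_\epsilon}$ you still need the ordering of the initial data on all of $\mathbb{R}$, i.e.\ a bound on $\epsilon\psi(y_\epsilon/\epsilon,\omega)-\epsilon\inf_{z\in\mathbb{R}}\psi(z,\omega)$ (and symmetrically with the supremum). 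Choosing $y_\epsilon$ to nearly minimize $\psi$ over a window $\{|z|\le R_\epsilon\}$ only controls the oscillation \emph{on that window}; outside it, $\psi$ can drop arbitrarily far below the window infimum, and then no constant makes $v^\epsilon_{y_\epsilon}$ an admissible global comparison function. Restricting attention to the window would require a localized comparison principle or a finite-speed-of-propagation argument, neither of which is available here: the equation is viscous (second order), so there is no finite speed of propagation, and the paper provides no bounded-domain comparison with boundary control. Thus the key estimate of your plan has a genuine gap.

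The paper's missing idea is a different one: perturb the corrector solution by $\pm\delta\psi(x)$ with $\psi(x)=\frac{2}{\pi}\int_0^x\arctan(y)\,dy$, whose derivative tends to $\pm1$ at $\pm\infty$. Then $v_{2,\delta}^\lambda(0,x,\omega)=F_2^\lambda(x,\omega)-\delta\psi(x)-K$ has asymptotic slopes $\theta+\delta$ at $-\infty$ and $\theta-\delta$ at $+\infty$, so by sublinearity of the corrector the difference from $\theta x$ tends to $-\infty$ in both directions and a large constant $K(\theta,\delta,\omega)$ makes the initial ordering hold \emph{globally}; the price is only an $O(\delta)$ shift of the time slope (through a Lipschitz constant of $G$ on the relevant gradient interval, which is why the bounds \eqref{eq:adamvis2} matter), and $\delta$ is sent to $0$ at the end. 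This yields $\lim_{\epsilon\to0}u_\theta^\epsilon(1,0,\omega)=\lambda$, after which locally uniform convergence is obtained from Lemma \ref{lem:equi} together with the cited Egorov/Birkhoff argument — note that equicontinuity in $x$ alone, which is all your last step invokes, does not by itself upgrade convergence at one point to locally uniform convergence in $(t,x)$. To repair your write-up you would need either to import the paper's $\delta$-tilt construction or to prove a localization result for the viscous equation, which is a substantial additional task.
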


\begin{proof}
	We prove part (b). (The proof of part (a) is similar.) Fix any $\theta\in[\theta_2(\beta),+\infty)$ and let $\lambda = \theta_2^{-1}(\theta)$. It follows immediately from Theorem \ref{thm:staticHJ} that, for every $\omega\in\Omega$,
	\[ u_2^\lambda(t,x,\omega) = t\lambda + F_2^\lambda(x,\omega) \]
	gives a solution of \eqref{eq:birhuzur} in $\Lip\cap\Con^2([0,+\infty)\times\mathbb{R})$.
	
	For every $\delta\in(0,1)$, define $v_{2,\delta}^\lambda(\,\cdot\,,\,\cdot\,,\omega)$ by
	\begin{align*}
		v_{2,\delta}^\lambda(t,x,\omega) &= t(\lambda - (\kappa + 1)\delta) + F_2^\lambda(x,\omega) - \delta\psi(x) - K\\
					  &= u_2^\lambda(t,x,\omega) - t(\kappa + 1)\delta - \delta\psi(x) - K,
	\end{align*}
	where $\kappa$ is a Lipschitz constant for $G$ on the interval $[G_2^{-1}(\lambda - \beta) - 1,G_2^{-1}(\lambda) + 1]$,
	\[ \psi(x) = \frac{2}{\pi}\int_0^x\arctan(y)dy \]
	which satisfies
	\begin{equation}\label{eq:sofra1}
		0 \le \psi''(\cdot) \le 1,\ \ -1 \le \psi'(\cdot) \le 1,
	\end{equation}
	\begin{equation}\label{eq:sofra2}
		\lim_{x\to-\infty}\psi'(x) = -1\ \ \text{and}\ \ \lim_{x\to+\infty}\psi'(x) = 1,
	\end{equation}
	and $K>0$ is to be determined. Note that, for every $(t,x)\in(0,+\infty)\times\mathbb{R}$,
	\begin{align}
		&\quad\ a(x,\omega)\partial_{xx}^2v_{2,\delta}^\lambda(t,x,\omega) + G(\partial_xv_{2,\delta}^\lambda(t,x,\omega)) + \beta V(x,\omega)\nonumber\\
		&= a(x,\omega)(\partial_{xx}^2u_2^\lambda(t,x,\omega) - \delta\psi''(x)) + G(\partial_xu_2^\lambda(t,x,\omega) - \delta\psi'(x)) + \beta V(x,\omega)\nonumber\\
		&\ge a(x,\omega)\partial_{xx}^2u_2^\lambda(t,x,\omega) - \delta + G(\partial_xu_2^\lambda(t,x,\omega)) - \kappa\delta + \beta V(x,\omega)\label{eq:kullan}\\
		&= \partial_tu_2^\lambda(t,x,\omega) - (\kappa + 1)\delta = \partial_tv_{2,\delta}^\lambda(t,x,\omega).\nonumber
	\end{align}
	The inequality in \eqref{eq:kullan} follows from \eqref{eq:sofra1} and the following bounds due to \eqref{eq:adamvis2}: 
	\begin{align*}
		G_2^{-1}(\lambda - \beta) - 1 \le G_2^{-1}(\lambda - \beta) - \delta &\le (F_2^\lambda)'(x,\omega) - \delta\\
												    &< (F_2^\lambda)'(x,\omega) + \delta < G_2^{-1}(\lambda) + \delta \le G_2^{-1}(\lambda) + 1.
	\end{align*}
	Hence, $v_{2,\delta}^\lambda(\,\cdot\,,\,\cdot\,,\omega)$ is a subsolution of \eqref{eq:birhuzur} in $\Lip\cap\Con^2([0,+\infty)\times\mathbb{R})$.
	
	For $\mathbb{P}$-a.e.\ $\omega$,
	\[ \lim_{x\to-\infty}\frac1{x}v_{2,\delta}^\lambda(0,x,\omega) = \theta_2(\lambda) + \delta = \theta + \delta \quad\text{and}\quad \lim_{x\to+\infty}\frac1{x}v_{2,\delta}^\lambda(0,x,\omega) = \theta_2(\lambda) - \delta = \theta - \delta \]
	by \eqref{eq:ardil} and \eqref{eq:sofra2}. Therefore,
	\[ v_{2,\delta}^\lambda(0,x,\omega) \le \theta x = u_\theta(0,x,\omega) \]
	for every $x\in\mathbb{R}$ when $K = K(\theta,\delta,\omega) > 0$ is sufficiently large. By the comparison principle in Proposition \ref{prop:comp},
	\[ v_{2,\delta}^\lambda(t,x,\omega) \le u_\theta(t,x,\omega)\ \text{for every $(t,x)\in[0,+\infty)\times\mathbb{R}$}. \]
	In particular,
	\[ \liminf_{\epsilon\to0}u_\theta^\epsilon(1,0,\omega) = \liminf_{\epsilon\to0}\epsilon u_\theta\left(\frac1{\epsilon},0,\omega\right) \ge \lim_{\epsilon\to0}\epsilon v_{2,\delta}^\lambda\left(\frac1{\epsilon},0,\omega\right) = \lambda - (\kappa + 1)\delta. \]

	Similarly,
	\[ w_{2,\delta}^\lambda(t,x,\omega) = t(\lambda + (\kappa + 1)\delta) + F_2^\lambda(x,\omega) + \delta\psi(x) + K \]
	defines a supersolution of \eqref{eq:birhuzur} in $\Lip\cap\Con^2([0,+\infty)\times\mathbb{R})$, and, for $\mathbb{P}$-a.e.\ $\omega$,
	\[ w_{2,\delta}^\lambda(0,x,\omega) \ge \theta x = u_\theta(0,x,\omega) \]
	for every $x\in\mathbb{R}$ when $K = K(\theta,\delta,\omega) > 0$ is sufficiently large. By the comparison principle in Proposition \ref{prop:comp},
	\[ \limsup_{\epsilon\to0}u_\theta^\epsilon(1,0,\omega) = \limsup_{\epsilon\to0}\epsilon u_\theta\left(\frac1{\epsilon},0,\omega\right) \le \lim_{\epsilon\to0}\epsilon w_{2,\delta}^\lambda\left(\frac1{\epsilon},0,\omega\right) = \lambda + (\kappa + 1)\delta. \]
	Since $\delta\in(0,1)$ is arbitrary, we deduce that
	\begin{equation}\label{eq:nokta}
		\lim_{\epsilon\to0}u_\theta^\epsilon(1,0,\omega) = \lambda = \theta_2^{-1}(\theta)\ \ \text{for $\mathbb{P}$-a.e.\ $\omega$.}
	\end{equation}
	
	Finally, the desired locally uniform convergence follows from \eqref{eq:nokta} and Lemma \ref{lem:equi} by a general argument involving Egorov's theorem and the Birkhoff ergodic theorem. See \cite[pp.\ 1501--1502]{KRV06} or \cite[Lemma 4.1]{DK17} which is based on \cite[Lemma 2.4]{AT14}.
\end{proof}

\subsection{Locally uniform convergence for each $\theta\in(\theta_1(\beta),\theta_2(\beta))$}\label{sub:flat}

In this subsection, we will take $\lambda = \beta$ and denote the derivatives of the unique solutions in Theorem \ref{thm:staticHJ} by $f_i^\beta = (F_i^\beta)'$, $i\in\{1,2\}$. We will also use the following notation:
\begin{equation}\label{eq:es}
	s(x,\omega) = \int_0^x\frac{dy}{a(y,\omega)}.
\end{equation}

\begin{lemma}\label{lem:kayahan}
	Given any $\omega\in\Omega$, $\delta\in(0,\beta)$ and $L_1,L_2\in\mathbb{R}$ such that 
	\[ s(L_2,\omega) - s(L_1,\omega) > \frac1{\delta}\left(G_2^{-1}(\beta) - G_1^{-1}(\beta)\right) \quad\text{and}\quad \beta V(\,\cdot\,,\omega) \ge \beta - \delta\ \text{on}\ [L_1,L_2], \]
	we have the following implications for every $x_1,x_2\in[L_1,L_2]$.
	\begin{itemize}
	    \item[(i)] If
		\begin{equation}\label{eq:ekal1}
			s(x_1,\omega) - s(L_1,\omega) > -\frac1{\delta}G_1^{-1}(\beta),
		\end{equation}
		then there is a $z_1\in(L_1,x_1]$ such that
		\[ G_1(f_1^\beta(z_1,\omega)) \le 2\delta \quad\text{and}\quad s(x_1,\omega) - s(z_1,\omega) \le -\frac1{\delta}G_1^{-1}(\beta). \]

		\item [(ii)] If
		\begin{equation}\label{eq:ekal2}
			s(L_2,\omega) - s(x_2,\omega) > \frac1{\delta}G_2^{-1}(\beta),
		\end{equation}
		then there is a $z_2\in[x_2,L_2)$ such that
		\[ G_2(f_2^\beta(z_2,\omega)) \le 2\delta \quad\text{and}\quad s(z_2,\omega) - s(x_2,\omega) \le \frac1{\delta}G_2^{-1}(\beta). \]
	\end{itemize}
\end{lemma}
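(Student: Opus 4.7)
The plan is to prove part (ii) carefully and then indicate the symmetry for part (i). The crux is that, by \eqref{eq:palasvis} with $\lambda=\beta$, the function $f_i^\beta = (F_i^\beta)'$ satisfies the first-order ODE
\[ a(x,\omega)(f_i^\beta)'(x,\omega) = \beta\bigl(1 - V(x,\omega)\bigr) - G_i(f_i^\beta(x,\omega)). \]
On $[L_1,L_2]$ the hypothesis $\beta V(\cdot,\omega) \ge \beta - \delta$ forces $\beta(1-V) \le \delta$, so wherever $G_i(f_i^\beta(x,\omega)) > 2\delta$ we obtain the differential inequality $(f_i^\beta)'(x,\omega) \le -\delta/a(x,\omega)$, which upon integration is controlled through the function $s$ of \eqref{eq:es}.

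For part (ii) I would set $z_2 := \inf\{z \in [x_2,L_2] : G_2(f_2^\beta(z,\omega)) \le 2\delta\}$, with the convention $\inf\emptyset = +\infty$. First I argue that this set is nonempty: otherwise the differential inequality above holds throughout $[x_2,L_2]$, so integration together with \eqref{eq:ekal2} yields
\[ f_2^\beta(L_2,\omega) - f_2^\beta(x_2,\omega) \le -\delta\bigl(s(L_2,\omega) - s(x_2,\omega)\bigr) < -G_2^{-1}(\beta), \]
contradicting the a priori range $f_2^\beta \in [0, G_2^{-1}(\beta)]$ supplied by Theorem \ref{thm:staticHJ}. Hence $z_2 \in [x_2,L_2]$ and, by continuity of $G_2 \circ f_2^\beta$, the infimum is attained: $G_2(f_2^\beta(z_2,\omega)) \le 2\delta$. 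Since by construction $G_2(f_2^\beta(\cdot,\omega)) > 2\delta$ on $[x_2,z_2)$, the same integration restricted to $[x_2,z_2]$ gives
\[ \delta\bigl(s(z_2,\omega) - s(x_2,\omega)\bigr) \le f_2^\beta(x_2,\omega) - f_2^\beta(z_2,\omega) \le G_2^{-1}(\beta), \]
which is the desired bound and, combined with \eqref{eq:ekal2}, also forces $z_2 < L_2$.

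Part (i) will follow by the same scheme with $z_1 := \sup\{z \in [L_1,x_1] : G_1(f_1^\beta(z,\omega)) \le 2\delta\}$ and backward integration from $x_1$ down to $z_1$, using the a priori range $f_1^\beta \in [G_1^{-1}(\beta), 0]$ in place of the one above; alternatively, one could appeal to the reflection substitution \eqref{eq:lazimol}. I do not foresee a serious obstacle: the entire argument is quantitative tracking of one first-order ODE against its a priori box, and the only delicacy is to invoke the bounds of Theorem \ref{thm:staticHJ} at precisely the right step so that the constants $G_2^{-1}(\beta)$ and $-G_1^{-1}(\beta)$ surface cleanly in the final estimates. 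Incidentally, the overarching hypothesis $s(L_2,\omega) - s(L_1,\omega) > (G_2^{-1}(\beta) - G_1^{-1}(\beta))/\delta$ plays no role in the proofs of the two individual implications; it presumably serves downstream, to guarantee that $z_1$ produced by (i) and $z_2$ produced by (ii) have nontrivial separation, which will matter for the subsolution/supersolution construction in Subsection \ref{sub:flat}.
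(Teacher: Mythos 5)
Your argument is correct and is essentially the paper's own: both exploit the identity $a(f_2^\beta)' = \beta(1-V) - G_2(f_2^\beta)$ to get $(f_2^\beta)' \le -\delta/a$ wherever $G_2\circ f_2^\beta > 2\delta$ on the hill, then integrate against $s$ and invoke the a priori range from Theorem \ref{thm:staticHJ}; the only cosmetic difference is that you define $z_2$ as the infimum of the sublevel set while the paper takes the supremum of the set where the strict bound persists, with the paper handling $G_2(f_2^\beta(x_2,\omega))\le 2\delta$ as a separate trivial case. Your side remark that the overarching hypothesis on $s(L_2,\omega)-s(L_1,\omega)$ is not used inside (i) or (ii) is also accurate; it guarantees that admissible choices of $x_1,x_2$ exist and is put to work in Lemma \ref{lem:LBUB}.
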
	

\begin{proof}
	We prove the second implication. If $G_2(f_2^\beta(x_2,\omega)) \le 2\delta$, then we can simply take $z_2 = x_2$. Otherwise, for any $z \in [x_2,L_2]$ such that $G_2(f_2^\beta(x,\omega)) \ge 2\delta$ holds for all $x\in[x_2,z]$,
	the equality
	\[ a(x,\omega)(f_2^\beta)'(x,\omega) + G_2(f_2^\beta(x,\omega)) + \beta V(x,\omega) = \beta \]
	yields
	\[ a(x,\omega)(f_2^\beta)'(x,\omega) \le -\delta \]
		for all $x\in[x_2,z]$, and
		\[ -G_2^{-1}(\beta) \le f_2^\beta(z,\omega) - f_2^\beta(x_2,\omega) = \int_{x_2}^{z} (f_2^\beta)'(y,\omega)dy \le -\delta \int_{x_2}^{z}\frac{dy}{a(y,\omega)} = -\delta \left[s(z,\omega) - s(x_2,\omega)\right]. \]
		The first inequality uses the bounds in \eqref{eq:adamvis2}. Therefore,
		\[ z_2 := \sup\{ z\in [x_2,L_2]:\,G_2(f_2^\beta(x,\omega)) \ge 2\delta\ \text{for all}\ x\in[x_2,z] \} \]
		satisfies
		\[ s(z_2,\omega) - s(x_2,\omega) \le \frac1{\delta}G_2^{-1}(\beta). \]
		We recall \eqref{eq:ekal2} and deduce that $z_2\in[x_2,L_2)$ and $G_2(f_2^\beta(z_2,\omega)) = 2\delta$. This concludes the proof of the second implication. The first implication is proved similarly.
\end{proof}

\begin{lemma}\label{lem:LBUB}
	Assume \eqref{eq:Gcon}--\eqref{eq:fruit} and \eqref{eq:existunique}--\eqref{eq:scaledhill}. There exists an $\overline\Omega_0\in\mathcal{F}$ with $\mathbb{P}(\overline\Omega_0) = 1$ such that
	\[ \lim_{\epsilon\to0} u_\theta^\epsilon(1,0,\omega) = \beta \]
	for all $\theta\in(\theta_1(\beta),\theta_2(\beta))$ and $\omega\in\overline\Omega_0$. Moreover, given any $\theta\in(\theta_1(\beta),\theta_2(\beta))$, there exists an $\Omega_0^\theta\in\mathcal{F}$ with $\mathbb{P}(\Omega_0^\theta) = 1$ such that, for every $\omega\in\Omega_0^\theta$ and $T,L>0$,
	\begin{equation}\label{eq:locunif3}
		\lim_{\epsilon\to0}\sup_{t\in[0,T]}\sup_{x\in[-L,L]} |u_\theta^\epsilon(t,x,\omega) - t\beta - \theta x| = 0.
	\end{equation}
\end{lemma}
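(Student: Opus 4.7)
The plan is to establish the pointwise limit $\lim_{\epsilon \to 0} u_\theta^\epsilon(1, 0, \omega) = \beta$ for each $\theta \in (\theta_1(\beta), \theta_2(\beta))$ and $\omega$ in a single set $\overline{\Omega}_0 \in \mathcal{F}$ of full probability, and then lift this to the locally uniform convergence \eqref{eq:locunif3} via Lemma \ref{lem:equi}. The pointwise limit will come from matching upper and lower bounds obtained by the comparison principle (Proposition \ref{prop:comp}) applied to globally defined super- and subsolutions of \eqref{eq:birhuzur}. For each small $\delta > 0$, the scaled hill condition \eqref{eq:scaledhill} supplies an interval $[L_1, L_2]$ (depending on $\omega$ and of arbitrarily large $s$-length) on which $\beta V(\,\cdot\,, \omega) \geq \beta - \delta$, and Lemma \ref{lem:kayahan} then locates points $z_1 \leq z_2$ in $(L_1, L_2)$ at which $f_1^\beta(z_1, \omega)$ and $f_2^\beta(z_2, \omega)$ lie within $G^{-1}(2\delta)$ of $0$.

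For the lower bound, I would build a subsolution $v(t, x, \omega) = t(\beta - 2\delta) + \phi(x, \omega) - K(\omega, \delta)$ whose spatial profile $\phi$ coincides with $F_2^\beta + c_L$ on $(-\infty, z_1]$, with $F_1^\beta + c_R$ on $[z_2, +\infty)$, and on the middle interval $[z_1, z_2]$ is the primitive of the affine-in-$s$ interpolation $p(x) = f_2^\beta(z_1, \omega) - (f_2^\beta(z_1, \omega) - f_1^\beta(z_2, \omega))(s(x, \omega) - s(z_1, \omega))/(s(z_2, \omega) - s(z_1, \omega))$ between the two outer slopes. By construction $\phi \in \Con^1(\mathbb{R})$ with slopes matching at $z_1$ and $z_2$ (so no viscosity corners arise), and on each piece $a \phi'' + G(\phi') + \beta V \geq \beta - 2\delta$: the outer pieces solve the corrector equation with $\lambda = \beta$, and on the middle $a p' = -(f_2^\beta(z_1) - f_1^\beta(z_2))/(s(z_2) - s(z_1))$ is $\geq -\delta$ once the $s$-length of $[z_1, z_2]$ exceeds $(G_2^{-1}(\beta) - G_1^{-1}(\beta))/\delta$, combined with $G(p) \geq 0$ and $\beta V \geq \beta - \delta$ on the hill. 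Because $\theta_2(\beta) > \theta > \theta_1(\beta)$, the difference $\phi(x, \omega) - \theta x \to -\infty$ as $|x| \to \infty$, so $\sup_x [\phi(x, \omega) - \theta x]$ is finite for a.e.\ $\omega$ and a sufficiently large $K(\omega, \delta)$ enforces $v(0, x, \omega) \leq \theta x$ globally. Then Proposition \ref{prop:comp} yields $u_\theta \geq v$, and letting $\epsilon \to 0$ and then $\delta \to 0$ gives $\liminf_{\epsilon \to 0} u_\theta^\epsilon(1, 0, \omega) \geq \beta$.

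A symmetric supersolution construction produces $\limsup_{\epsilon \to 0} u_\theta^\epsilon(1, 0, \omega) \leq \beta$: combine $u_1^\beta$ and $u_2^\beta$ with the $\pm \delta \psi$ Lipschitz tilt from the proof of Lemma \ref{lem:isbu} so that the outer asymptotics dominate $\theta x$ from above, and patch through the hill via Lemma \ref{lem:kayahan}. The main obstacle in both constructions is carrying out the patching so that it produces a valid viscosity sub/supersolution: because $f_1^\beta \leq 0 \leq f_2^\beta$ have opposite signs, a direct two-piece patch would give corners of the wrong type, and the resolution relies on both the near-zero derivatives of $F_i^\beta$ at the Lemma \ref{lem:kayahan} points and the large $s$-length of the hill region, which together allow an $s$-linear slope interpolation whose error is absorbed by the $O(\delta)$ slack in the equation. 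The independence of $\overline{\Omega}_0$ from $\theta$ is obtained by taking an intersection of the above full-measure sets over a countable dense $\{\theta_n\} \subset (\theta_1(\beta), \theta_2(\beta))$ and extending via semicontinuity of the maps $\theta \mapsto \liminf_{\epsilon \to 0} u_\theta^\epsilon(1, 0, \omega)$ and $\theta \mapsto \limsup_{\epsilon \to 0} u_\theta^\epsilon(1, 0, \omega)$ (from stability of viscosity solutions in the initial slope on compacts), and the upgrade from pointwise convergence at $(1, 0)$ to the locally uniform convergence \eqref{eq:locunif3} follows from Lemma \ref{lem:equi} via the Egorov--Birkhoff argument cited at the end of the proof of Lemma \ref{lem:isbu}.
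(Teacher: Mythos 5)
Your proposal is correct and follows essentially the same route as the paper's proof: use the scaled hill condition together with Lemma \ref{lem:kayahan} to glue $F_2^\beta$ to $F_1^\beta$ (and, in the reverse order, $F_1^\beta$ to $F_2^\beta$) across a hill of large $s$-length via an affine-in-$s$ bridge, apply the comparison principle in Proposition \ref{prop:comp} together with the Birkhoff asymptotics \eqref{eq:ardil} to get matching lower and upper bounds at $(1,0)$, and upgrade to \eqref{eq:locunif3} via Lemma \ref{lem:equi} and the Egorov--Birkhoff argument. The only deviations are cosmetic: you glue merely $C^1$ where the paper modifies the bridge near $z_1,z_2$ to obtain a $C^2$ profile (your junction points are still fine because $a>0$ and the one-sided second derivatives compare the right way with any test function), your index labels at the junction points are swapped relative to which of $f_1^\beta,f_2^\beta$ is controlled there, and your closing dense-in-$\theta$/semicontinuity step is unnecessary, since, exactly as in the paper, the sub/supersolutions do not depend on $\theta$ (only the constant $K$ does), so the single set $\Omega_{\mathrm{sh}}\cap\Omega_{1,2}$ already serves all $\theta\in(\theta_1(\beta),\theta_2(\beta))$ simultaneously.
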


\begin{proof}
By the scaled hill condition \eqref{eq:scaledhill}, there exists an $\Omega_{\mathrm{sh}}\in\mathcal{F}$ with $\mathbb{P}(\Omega_{\mathrm{sh}}) = 1$ such that, for every $\omega\in\Omega_{\mathrm{sh}}$, $\delta\in(0,\beta)$ and $C > 0$, there is an interval $[L_1,L_2]$ such that
\begin{equation}\label{eq:shillbu}
	s(L_2,\omega) - s(L_1,\omega) \ge C \qquad\text{and}\qquad \beta V(\,\cdot\,,\omega) \ge \beta - \delta\ \text{on}\ [L_1,L_2]
\end{equation}
with the notation in \eqref{eq:es}. This follows from the observation that it suffices to consider $\delta\in(0,\beta)\cap\mathbb{Q}$ and $C \in \mathbb{N} = \{1,2,3,\ldots\}$.

\subsubsection*{Asymptotic lower bound at $(1,0)$}

For every $\omega\in\Omega_{\mathrm{sh}}$, $\delta\in(0,\beta)$ and $C > \frac1{\delta}\left(G_2^{-1}(\beta) - G_1^{-1}(\beta)\right)$, take an interval $[L_1,L_2]$ that satisfies \eqref{eq:shillbu}. Let $x_1 = L_2$ and $x_2=L_1$. By Lemma \ref{lem:kayahan}, there exist $z_1\in(L_1,L_2]$ and $z_2\in[L_1,L_2)$ such that
\begin{equation}\label{eq:ekmek}
s(z_1,\omega) - s(z_2,\omega) \ge C - \frac1{\delta}\left(G_2^{-1}(\beta) - G_1^{-1}(\beta)\right)
\end{equation}
and
\begin{equation}\label{eq:gercekler}
G(f_i^\beta(z_i,\omega)) \le 2\delta,\quad i\in\{1,2\}.
\end{equation}
In particular, $L_1 <z_2 < z_1 < L_2$. It follows from 
\[ a(x,\omega)(f_i^\beta)'(x,\omega) + G(f_i^\beta(x,\omega)) + \beta V(x,\omega) = \beta,\quad i\in\{1,2\}, \]
and \eqref{eq:gercekler} that
\begin{equation}\label{eq:gercekler2}
-2\delta \le a(z_i,\omega)(f_i^\beta)'(z_i,\omega) \le \delta, \quad i\in\{1,2\}.
\end{equation}

When $C$ is sufficiently large, there exists a $g(\,\cdot\,,\omega)\in\Con^1(\mathbb{R})$ that satisfies the following conditions:
\begin{equation}\label{eq:phi1}
	\begin{aligned}
		&g(z_1,\omega) = f_1^\beta(z_1,\omega),\quad g'(z_1,\omega) = (f_1^\beta)'(z_1,\omega),\\
		&g(z_2,\omega) = f_2^\beta(z_2,\omega),\quad g'(z_2,\omega) = (f_2^\beta)'(z_2,\omega);
	\end{aligned}
\end{equation}
\begin{equation}\label{eq:phi2}
	G(g(x,\omega)) \le 3\delta \quad\text{and}\quad -2\delta \le a(x,\omega)g'(x,\omega) \le \delta \quad \text{for all}\ x\in[z_2,z_1].
\end{equation}
Indeed, for every $\eta \in (0,1)$, we can take a function $\tilde g(\,\cdot\,,\omega)$ of the form
\[ \tilde g(x,\omega) = c_1 + f_2^\beta(z_2,\omega) - (c_2 + f_2^\beta(z_2,\omega) - f_1^\beta(z_1,\omega))\frac{s(x,\omega) - s(z_2,\omega)}{s(z_1,\omega) - s(z_2,\omega)} \]
with appropriately chosen $c_1,c_2 \in(-\eta,\eta)$ and modify it around $z_1$ and $z_2$ to match the conditions in \eqref{eq:phi1}. In case the modification requires an overshoot in the function value, the error margin in the first inequality in \eqref{eq:phi2} is larger than the one in \eqref{eq:gercekler}. The derivative bounds in \eqref{eq:phi2} are consistent with those in \eqref{eq:gercekler2}. To stay within these bounds on $(z_2,z_1)$, it suffices to make the right-hand side of \eqref{eq:ekmek} greater than $\frac1{\delta}\left(G_2^{-1}(3\delta) - G_1^{-1}(3\delta) + 1\right)$.

Construct $F_{2,1}^\beta(\,\cdot\,,\omega)$ by setting $F_{2,1}^\beta(0,\omega) = 0$ and
\begin{equation}\label{eq:sekil}
	(F_{2,1}^\beta)'(x,\omega) = \begin{cases} f_2^\beta(x,\omega)&\text{if}\ x \le z_2,\\
												      g (x,\omega)&\text{if}\ z_2 < x < z_1,\\
											   f_1^\beta(x,\omega)&\text{if}\ x \ge z_1.\end{cases}
\end{equation}
Note that $F_{2,1}^\beta(\,\cdot\,,\omega)\in\Lip\cap\Con^2(\mathbb{R})$ by \eqref{eq:phi1}--\eqref{eq:phi2}. Moreover, since
\[ \beta - 3\delta = -2\delta + 0 + (\beta - \delta) \le a(x,\omega)g'(x,\omega) + G(g(x,\omega)) + \beta V(x,\omega) \le \delta + 3\delta + \beta = \beta + 4\delta \]
for every $x\in[z_2,z_1]\subset[L_1,L_2]$ by \eqref{eq:phi2},
\[ \beta - 3\delta \le a(x,\omega)(F_{2,1}^\beta)''(x,\omega) + G((F_{2,1}^\beta)'(x,\omega)) + \beta V(x,\omega) \le \beta + 4\delta \]
for every $x\in\mathbb{R}$. It follows immediately that $v_{0,\delta}^\beta(\,\cdot\,,\,\cdot\,,\omega)$, defined by
\[ v_{0,\delta}^\beta(t,x,\omega) = t(\beta - 3\delta) + F_{2,1}^\beta(x,\omega) - K, \]
where $K > 0$ is to be determined, is a subsolution of \eqref{eq:birhuzur} in $\Lip\cap\Con^2([0,+\infty)\times\mathbb{R})$.

By the definitions in \eqref{eq:thetacom} and the Birkhoff ergodic theorem, there exists an $\Omega_{1,2}\in\mathcal{F}$ with $\mathbb{P}(\Omega_{1,2}) = 1$ such that, for every $\omega\in\Omega_{1,2}$,
\begin{equation}\label{eq:alrayyan}
	\lim_{x\to\pm\infty} \frac1{x}F_1^\beta(x,\omega) = \theta_1(\beta) \quad\text{and}\quad \lim_{x\to\pm\infty} \frac1{x}F_2^\beta(x,\omega) = \theta_2(\beta).
\end{equation}
Let $\overline\Omega_0 = \Omega_{\mathrm{sh}}\cap\Omega_{1,2}$ and note that $\mathbb{P}(\overline\Omega_0) = 1$. For every $\omega\in\overline\Omega_0$,
\[ \lim_{x\to-\infty}\frac1{x}v_{0,\delta}^\beta(0,x,\omega) = \theta_2(\beta) \quad\text{and}\quad \lim_{x\to+\infty}\frac1{x}v_{0,\delta}^\beta(0,x,\omega) = \theta_1(\beta) \]
by \eqref{eq:sekil}--\eqref{eq:alrayyan}. Therefore, given any $\theta\in(\theta_1(\beta),\theta_2(\beta))$ and $\omega\in\overline\Omega_0$,
\[ v_{0,\delta}^\beta(0,x,\omega) \le \theta x = u_\theta(0,x,\omega) \]
for every $x\in\mathbb{R}$ when $K = K(\theta,\delta,\omega) > 0$ is sufficiently large. 
By the comparison principle in Proposition \ref{prop:comp},
\[ v_{0,\delta}^\beta(t,x,\omega) \le u_\theta(t,x,\omega)\ \text{for every $(t,x)\in[0,+\infty)\times\mathbb{R}$}. \]
In particular,
\[ \liminf_{\epsilon\to0}u_\theta^\epsilon(1,0,\omega) = \liminf_{\epsilon\to0}\epsilon u_\theta\left(\frac1{\epsilon},0,\omega\right) \ge \lim_{\epsilon\to0}\epsilon v_{0,\delta}^\beta\left(\frac1{\epsilon},0,\omega\right) = \beta - 3\delta. \]
Since $\delta\in(0,\beta)$ is arbitrary, we deduce that
\begin{equation}\label{eq:flatLB}
	\liminf_{\epsilon\to0}u_\theta^\epsilon(1,0,\omega) \ge \beta\ \ \text{for all $\theta\in(\theta_1(\beta),\theta_2(\beta))$ and $\omega\in\overline\Omega_0$.}
\end{equation}

\subsubsection*{Asymptotic upper bound at $(1,0)$}

For every $\omega\in\Omega_{\mathrm{sh}}$, $\delta\in(0,\beta)$ and $C > \frac{2}{\delta}\left(G_2^{-1}(\beta) - G_1^{-1}(\beta)\right)$, take an interval $[L_1,L_2]$ that satisfies \eqref{eq:shillbu}. Fix $x_1,x_2\in[L_1,L_2]$ such that \eqref{eq:ekal1}, \eqref{eq:ekal2} and
\[ s(x_2,\omega) - s(x_1,\omega) \ge \frac{C}{2} \]
are satisfied. By Lemma \ref{lem:kayahan}, there exist $z_1,z_2\in\mathbb{R}$ such that $L_1 < z_1 \le x_1 < x_2 \le z_2 < L_2$,
\[ s(z_2,\omega) - s(z_1,\omega) \ge s(x_2,\omega) - s(x_1,\omega) \ge \frac{C}{2} \]
and
\[ G(f_i^\beta(z_i,\omega)) \le 2\delta,\quad i\in\{1,2\}. \]
As we argued in the proof of the lower bound, for $C$ sufficiently large, there exists a $g(\,\cdot\,,\omega)\in\Con^1(\mathbb{R})$ that satisfies \eqref{eq:phi1}
as well as
\begin{equation}\label{eq:phi3}
G(g(x,\omega)) \le 3\delta \quad\text{and}\quad -2\delta \le a(x,\omega)g'(x,\omega) \le \delta \quad \text{for all}\ x\in[z_1,z_2].
\end{equation}

Construct $F_{1,2}^\beta(\,\cdot\,,\omega)$ by setting $F_{1,2}^\beta(0,\omega) = 0$ and
\begin{equation}\label{eq:semal}
	(F_{1,2}^\beta)'(x,\omega) = \begin{cases} f_1^\beta(x,\omega)&\text{if}\ x \le z_1,\\
													   g(x,\omega)&\text{if}\ z_1 < x < z_2,\\
											   f_2^\beta(x,\omega)&\text{if}\ x \ge z_2.\end{cases}
\end{equation}
Note that $F_{1,2}^\beta(\,\cdot\,,\omega)\in\Lip\cap\Con^2(\mathbb{R})$ by \eqref{eq:phi1} and \eqref{eq:phi3}. Moreover, since
\[ \beta - 3\delta = -2\delta + 0 + (\beta - \delta) \le a(x,\omega)g'(x,\omega) + G(g(x,\omega)) + \beta V(x,\omega) \le \delta + 3\delta + \beta = \beta + 4\delta \]
for every $x\in[z_1,z_2]\subset[L_1,L_2]$ by \eqref{eq:phi3},
\[ \beta - 3\delta \le a(x,\omega)(F_{1,2}^\beta)''(x,\omega) + G((F_{1,2}^\beta)'(x,\omega)) + \beta V(x,\omega) \le \beta + 4\delta \]
for every $x\in\mathbb{R}$. It follows immediately that $w_{0,\delta}^\beta(\,\cdot\,,\,\cdot\,,\omega)$, defined by
\[ w_{0,\delta}^\beta(t,x,\omega) = t(\beta + 4\delta) + F_{1,2}^\beta(x,\omega) + K, \]
where $K > 0$ is to be determined, is a supersolution of \eqref{eq:birhuzur} in $\Lip\cap\Con^2([0,+\infty)\times\mathbb{R})$.
					  
For every $\omega\in\overline\Omega_0 = \Omega_{\mathrm{sh}}\cap\Omega_{1,2}$ (with the notation in the proof of the lower bound),
\[ \lim_{x\to-\infty}\frac1{x}w_{0,\delta}^\beta(0,x,\omega) = \theta_1(\beta) \quad\text{and}\quad \lim_{x\to+\infty}\frac1{x}w_{0,\delta}^\beta(0,x,\omega) = \theta_2(\beta) \]
by \eqref{eq:alrayyan} and \eqref{eq:semal}. Therefore, given any $\theta\in(\theta_1(\beta),\theta_2(\beta))$ and $\omega\in\overline\Omega_0$,
\[ w_{0,\delta}^\beta(0,x,\omega) \ge \theta x = u_\theta(0,x,\omega) \]
for every $x\in\mathbb{R}$ when $K = K(\theta,\delta,\omega) > 0$ is sufficiently large. 
By the comparison principle in Proposition \ref{prop:comp},
\[ w_{0,\delta}^\beta(t,x,\omega) \ge u_\theta(t,x,\omega)\ \text{for every $(t,x)\in[0,+\infty)\times\mathbb{R}$}. \]
In particular,
\[ \limsup_{\epsilon\to0}u_\theta^\epsilon(1,0,\omega) = \limsup_{\epsilon\to0}\epsilon u_\theta\left(\frac1{\epsilon},0,\omega\right) \le \lim_{\epsilon\to0}\epsilon w_{0,\delta}^\beta\left(\frac1{\epsilon},0,\omega\right) = \beta + 4\delta. \]
Since $\delta\in(0,\beta)$ is arbitrary, we deduce that
\begin{equation}\label{eq:flatUB}
\limsup_{\epsilon\to0}u_\theta^\epsilon(1,0,\omega) \le \beta\ \ \text{for all $\theta\in(\theta_1(\beta),\theta_2(\beta))$ and $\omega\in\overline\Omega_0$.}
\end{equation}

\subsubsection*{Pointwise convergence at $(1,0)$}

Combining \eqref{eq:flatLB} and \eqref{eq:flatUB}, we conclude that
\[ \lim_{\epsilon\to0}u_\theta^\epsilon(1,0,\omega) = \beta \]
for all $\theta\in(\theta_1(\beta),\theta_2(\beta))$ and $\omega\in\overline\Omega_0$.

\subsubsection*{Locally uniform convergence} 

Fix any $\theta\in(\theta_1(\beta),\theta_2(\beta))$. Recall from Lemma \ref{lem:equi} that there exists an $\Omega_{\mathrm{ue}}^\theta\in\mathcal{F}$ with $\mathbb{P}(\Omega_{\mathrm{ue}}^\theta) = 1$ such that $\{ u_\theta^\epsilon(t,\,\cdot\,,\omega):\,\epsilon\in(0,1],\ t\in[0,+\infty),\ \omega\in\Omega_{\mathrm{ue}}^\theta \}$ is a uniformly equicontinuous family of functions.
By the general argument (involving Egorov's theorem and the Birkhoff ergodic theorem) we cited at the end of the proof of Lemma \ref{lem:isbu}, there exists an $\Omega_0^\theta\subset\overline\Omega_0\cap\Omega_{\mathrm{ue}}^\theta$ with $\mathbb{P}\left(\left(\overline\Omega_0\cap\Omega_{\mathrm{ue}}^\theta\right)\setminus\Omega_0^\theta\right) = 0$ (which implies $\mathbb{P}(\Omega_0^\theta) = 1$) such that \eqref{eq:locunif3} holds for every $\omega\in\Omega_0^\theta$ and $T,L>0$.
\end{proof}

\subsection{Completing the proofs of the homogenization results}\label{sub:homproofs}

\begin{proof}[Proof of Theorem \ref{thm:homlin}]
	By Theorem \ref{thm:thetasOK}, $\Ham\in\Liploc(\mathbb{R})$ and it is coercive. Therefore, the Cauchy problem for \eqref{eq:efhuzur} is well-posed in $\UC([0,+\infty)\times\mathbb{R})$ (see, e.g., \cite[Theorem 2.5]{DK17}). For every $\theta\in\mathbb{R}$, observe that the unique (classical and hence viscosity) solution $\overline u_\theta$ of \eqref{eq:efhuzur} with the initial condition $\overline u_\theta(x) = \theta x$, $x\in\mathbb{R}$, is given by
	\[ \overline u_\theta(t,x) = t\Ham(\theta) + \theta x. \]
	
	Let
	\[ \Omega_0 = \bigcap_{\theta\in\mathbb{Q}}\Omega_0^\theta \]
	with $\Omega_0^\theta\in\mathcal{F}$ provided in Lemma \ref{lem:isbu} and Lemma \ref{lem:LBUB} when $\theta\notin(\theta_1(\beta),\theta_2(\beta))$ and $\theta\in(\theta_1(\beta),\theta_2(\beta))$, respectively. Note that $\mathbb{P}(\Omega_0) = 1$ and, for every $\omega\in\Omega_0$ and $\theta\in\mathbb{Q}$, as $\epsilon\to0$, $u_\theta^\epsilon(\,\cdot\,,\,\cdot\,,\omega)$ converges locally uniformly on $[0,+\infty)\times\mathbb{R}$ to $\overline u_\theta$. It remains to generalize this statement to all $\theta\in\mathbb{R}$.
	
	Fix any $\theta\in\mathbb{R}$. For every $\omega\in\Omega$ and $\delta\in(0,1)$, define $v_{\theta,\delta}(\,\cdot\,,\,\cdot\,,\omega)$ and $w_{\theta,\delta}(\,\cdot\,,\,\cdot\,,\omega)$ by
	\begin{equation}\label{eq:kamp}
	\begin{aligned}
		v_{\theta,\delta}(t,x,\omega) &= u_\theta(t,x,\omega) - t(\kappa_\theta(\omega) + 1)\delta - \delta\psi(x) - K\quad\text{and}\\
		w_{\theta,\delta}(t,x,\omega) &= u_\theta(t,x,\omega) + t(\kappa_\theta(\omega) + 1)\delta + \delta\psi(x) + K,
	\end{aligned}
	\end{equation}
	where $\kappa_\theta(\omega)$ is a Lipschitz constant for $G$ on the interval $[-\ell_\theta(\omega) - 1,\ell_\theta(\omega) + 1]$ which in turn involves the Lipschitz constant $\ell_\theta(\omega)$ in \eqref{eq:duzlip},
	\[ \psi(x) = \frac{2}{\pi}\int_0^x\arctan(y)dy \]
	which satisfies \eqref{eq:sofra1}--\eqref{eq:sofra2} from the proof of Lemma \ref{lem:isbu}, and $K>0$ is to be determined. Let us check that $v_{\theta,\delta}(\,\cdot\,,\,\cdot\,,\omega)$ is a viscosity subsolution of \eqref{eq:birhuzur}. For every $(t_0,x_0)\in(0,+\infty)\times\mathbb{R}$ and $\varphi\in\Con^2((0,+\infty)\times\mathbb{R})$ such that $v_{\theta,\delta}(\,\cdot\,,\,\cdot\,,\omega)-\varphi$ attains a local maximum at $(t_0,x_0)$, define $\tilde\varphi\in\Con^2((0,+\infty)\times\mathbb{R})$ by
	\[ \tilde\varphi(t,x) = \varphi(t,x) +  t(\kappa_\theta(\omega) + 1)\delta + \delta\psi(x) + K \]
	and note that $u_\theta(\,\cdot\,,\,\cdot\,,\omega) - \tilde\varphi = v_{\theta,\delta}(\,\cdot\,,\,\cdot\,,\omega)-\varphi$.
	Therefore,
	\begin{align*}
	&\quad\ a(x_0,\omega)\partial_{xx}^2\varphi(t_0,x_0) + G(\partial_x\varphi(t_0,x_0)) + \beta V(x_0,\omega)\\
	&= a(x_0,\omega)(\partial_{xx}^2\tilde\varphi(t_0,x_0) - \delta\psi''(x)) + G(\partial_x\tilde\varphi(t_0,x_0) - \delta\psi'(x)) + \beta V(x_0,\omega)\\
	&\ge a(x_0,\omega)\partial_{xx}^2\tilde\varphi(t_0,x_0) - \delta + G(\partial_x\tilde\varphi(t_0,x_0)) - \kappa_\theta(\omega)\delta + \beta V(x_0,\omega)\\
	&\ge \partial_t\tilde\varphi(t_0,x_0) - (\kappa_\theta(\omega) + 1)\delta = \partial_t\varphi(t_0,x_0).
	\end{align*}
	Similarly, $w_{\theta,\delta}(\,\cdot\,,\,\cdot\,,\omega)$ is a viscosity supersolution of \eqref{eq:birhuzur}.
	
	Choose any $\theta'\in\mathbb{Q}$ such that $|\theta - \theta'| < \frac{\delta}{2}$. It follows from \eqref{eq:sofra2} that, when $K = K(\delta) > 0$ is sufficiently large,
	\[ v_{\theta,\delta}(0,x,\omega) = \theta x - \delta\psi(x) - K \le u_{\theta'}(0,x,\omega) = \theta' x \le \theta x + \delta\psi(x) + K = w_{\theta,\delta}(0,x,\omega) \]
	for every $x\in\mathbb{R}$. By the comparison principle in Proposition \ref{prop:comp},
	\[ v_{\theta,\delta}(t,x,\omega) \le u_{\theta'}(t,x,\omega) \le w_{\theta,\delta}(t,x,\omega)\ \text{for every $(t,x)\in[0,+\infty)\times\mathbb{R}$}. \]	
	We combine these inequalities with the definitions in \eqref{eq:kamp} and deduce that
	\[ |u_\theta(t,x,\omega) - u_{\theta'}(t,x,\omega)| \le t(\kappa_\theta(\omega) + 1)\delta + \delta|x| + K \] 
	for every $\omega\in\Omega$ and $(t,x)\in[0,+\infty)\times\mathbb{R}$.
	
	Finally, for every $\omega\in\Omega_0$ and $T,L>0$,
	\begin{align*}
		&\quad\, \limsup_{\epsilon\to0}\sup_{t\in[0,T]}\sup_{x\in[-L,L]} |u_\theta^\epsilon(t,x,\omega) - t\Ham(\theta) - \theta x|\\
		&\le \limsup_{\epsilon\to0}\sup_{t\in[0,T]}\sup_{x\in[-L,L]} \left( |u_{\theta'}^\epsilon(t,x,\omega) - t\Ham(\theta') - \theta' x| + |u_\theta^\epsilon(t,x,\omega) - u_{\theta'}^\epsilon(t,x,\omega)| \right)\\
		&\quad + T|\Ham(\theta) - \Ham(\theta')| + |\theta - \theta'|L\\
		&\le T\left[ (\kappa_\theta(\omega) + 1)\delta + |\Ham(\theta) - \Ham(\theta')| \right] + \left[ \delta + |\theta - \theta'| \right] L.
	\end{align*}
	Since $|\theta - \theta'|\le\frac{\delta}{2}$, $\Ham$ is continuous and $\delta\in(0,1)$ is arbitrary, we conclude that
	\[ \lim_{\epsilon\to0}\sup_{t\in[0,T]}\sup_{x\in[-L,L]} |u_\theta^\epsilon(t,x,\omega) - t\Ham(\theta) - \theta x| = 0.\qedhere \]
\end{proof}

\begin{proof}[Proof of Corollary \ref{cor:velinim}]
	The desired result follows readily from Theorem \ref{thm:homlin} and \cite[Theorem 3.1]{DK17}. The set $\Omega_0\in\mathcal{F}$ with $\mathbb{P}(\Omega_0) = 1$ is the one in Theorem \ref{thm:homlin}.
\end{proof}

\section*{Acknowledgments}

The author is grateful to E.\ Kosygina for helpful comments, including her concrete suggestions that simplified the statements and the proofs of Theorems \ref{thm:staticHJ}--\ref{thm:thetasOK}. The author also thanks A.\ Davini for his valuable feedback on a preliminary version of the manuscript.

\section*{Appendices}

\appendices

\section{On the scaled hill condition}\label{app:scaledhill}

With the notation
\[ s(x,\omega) = \int_0^x\frac{dy}{a(y,\omega)} \]
that we used in Subsection \ref{sub:flat}, the scaled hill condition \eqref{eq:scaledhill} reads as follows:
\begin{align*}
	&\text{for every $h\in(0,1)$, $C>0$ and $\mathbb{P}$-a.e.\ $\omega$, there is an interval $[L_1,L_2]$ such that}\\
	&s(L_2,\omega) - s(L_1,\omega) \ge C\ \text{and}\ V(\,\cdot\,,\omega) \ge h\ \text{on}\ [L_1,L_2].
\end{align*}
It is a refinement of the following condition:
\begin{equation}\label{eq:longhill}
	\mathbb{P}( V(\,\cdot\,,\omega) \ge h\ \text{on}\ [0,L] ) > 0\ \text{for every $h\in(0,1)$ and $L>0$.}
\end{equation}

\begin{proposition}\label{prop:long}
	Assume \eqref{eq:stat} and \eqref{eq:aVcon1}. 
	\begin{itemize}
		\item [(a)] 
		\eqref{eq:longhill} implies the scaled hill condition.
		\item [(b)] If $\mathbb{P}( a(0,\omega) \ge \kappa ) = 1$ for some $\kappa > 0$, then \eqref{eq:longhill} is equivalent to the scaled hill condition.
	\end{itemize}
\end{proposition}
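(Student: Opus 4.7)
The plan is to exploit a two-sided bound on the $s$-arclength $\int_{L_1}^{L_2}\frac{dy}{a(y,\omega)}$: since $a\in(0,1]$ pointwise one always has $\int_{L_1}^{L_2}\frac{dy}{a(y,\omega)}\ge L_2-L_1$, while under the extra hypothesis $a\ge\kappa$ one also has $\int_{L_1}^{L_2}\frac{dy}{a(y,\omega)}\le(L_2-L_1)/\kappa$. So the scaled hill condition is essentially an assertion that long Euclidean intervals on which $V\ge h$ exist almost surely, with a rescaling by $\kappa$ in one direction.

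For part (a), I fix $h\in(0,1)\cap\mathbb{Q}$ and $L\in\mathbb{N}$, set $A_{h,L}=\{V(\cdot,\omega)\ge h\text{ on }[0,L]\}$, and apply Birkhoff's ergodic theorem to the stationary process $x\mapsto\mathbf{1}_{A_{h,L}}(\tau_x\omega)$: since $\mathbb{P}(A_{h,L})>0$ by \eqref{eq:longhill}, for a $\mathbb{P}$-full set of $\omega$ the time average $\frac{1}{T}\int_0^T\mathbf{1}_{A_{h,L}}(\tau_x\omega)\,dx$ converges to this positive value, and in particular there exists $x=x(\omega)$ with $V(\cdot,\omega)\ge h$ on $[x,x+L]$. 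Intersecting over the countable family of $(h,L)$ gives a single $\mathbb{P}$-full $\Omega_0$. To verify the scaled hill condition for generic $(h,C)$ on $\Omega_0$, I choose $h'\in(h,1)\cap\mathbb{Q}$ and $L=\lceil C\rceil\in\mathbb{N}$; the interval $[x,x+L]$ produced for $(h',L)$ then satisfies $V\ge h'\ge h$ and, by $a\le 1$, has $s$-length at least $L\ge C$.

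For part (b), I first upgrade $\mathbb{P}(a(0,\omega)\ge\kappa)=1$ to $a(\cdot,\omega)\ge\kappa$ on all of $\mathbb{R}$ almost surely, using stationarity at rational points together with the continuity in \eqref{eq:aVcon1}. To deduce \eqref{eq:longhill} from the scaled hill condition, I fix $h\in(0,1)$ and $L>0$, pick $h'\in(h,1)$, and apply the scaled hill condition with parameters $(h',2L/\kappa)$: for a.e.\ $\omega$ this yields $[L_1,L_2]$ with $V\ge h'$ on it and $L_2-L_1\ge 2L$, so $[L_1,L_2-L]$ is a nondegenerate interval containing some rational $q$; then $V(\cdot,\omega)\ge h'\ge h$ on $[q,q+L]$. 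Consequently $\mathbb{P}\bigl(\bigcup_{q\in\mathbb{Q}}\{V\ge h\text{ on }[q,q+L]\}\bigr)=1$, countable subadditivity produces some $q$ with $\mathbb{P}(V\ge h\text{ on }[q,q+L])>0$, and stationarity transfers this positivity to the event with $q=0$.

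The only delicate point is the quantifier management in part (a): the scaled hill condition demands a single $\mathbb{P}$-null exceptional set that serves all $(h,C)$ simultaneously, which forces the countable reduction and the monotone approximation $V\ge h'\Rightarrow V\ge h$ to be carried out before Birkhoff's theorem is invoked. Everything else is routine bookkeeping.
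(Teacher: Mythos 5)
Your proof is correct and follows essentially the same route as the paper's: for (a), ergodicity (via Birkhoff) applied to the stationary event that $V\ge h$ on $[0,L]$ produces for a.e.\ $\omega$ a shifted interval on which $V\ge h$, and $a\le 1$ then gives the required lower bound on the $s$-length; for (b), the lower bound $a\ge\kappa$ converts an $s$-length $>L/\kappa$ into a Euclidean length $>L$, a rational left endpoint is extracted, and stationarity plus countability recovers \eqref{eq:longhill}. You additionally spell out two points the paper's proof passes over silently: the upgrade of $\mathbb{P}(a(0,\omega)\ge\kappa)=1$ to $a(\cdot,\omega)\ge\kappa$ everywhere a.s.\ (via stationarity over $\mathbb{Q}$ and continuity from \eqref{eq:aVcon1}), which is genuinely needed to bound $s(L_2,\omega)-s(L_1,\omega)\le (L_2-L_1)/\kappa$, and the countable reduction over $(h,C)$ in part (a). On the latter, note that \eqref{eq:scaledhill} as written permits the null set to depend on $(h,C)$, so your ``delicate quantifier management'' is not strictly forced by the statement; it is, however, harmless, and it anticipates exactly the reduction the paper itself performs at the start of the proof of Lemma~\ref{lem:LBUB}.
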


\begin{proof}
	Suppose \eqref{eq:longhill} holds. Fix any $h\in(0,1)$ and $C>0$. By ergodicity, for $\mathbb{P}$-a.e.\ $\omega$, there is a $z = z(\omega)\in\mathbb{R}$ such that $V(\,\cdot\,,\omega) \ge h$ on $[z,z + C]$. Since $s(z + C,\omega) - s(z,\omega) \ge C$, we conclude that the scaled hill condition holds. This proves part (a).
	
	Suppose $\mathbb{P}( a(0,\omega) \ge \kappa ) = 1$ for some $\kappa > 0$ and the scaled hill condition holds. Fix any $h\in(0,1)$, $L>0$ and $C > L/{\kappa}$. For $\mathbb{P}$-a.e.\ $\omega$, there is an interval $[L_1,L_2]$ such that
	\[ \frac{L}{\kappa} < C \le s(L_2,\omega) - s(L_1,\omega) \le \frac{L_2 - L_1}{\kappa} \quad\text{and}\quad V(\,\cdot\,,\omega) \ge h\ \text{on}\ [L_1,L_2]. \]
	Therefore, \[ \mathbb{P}( V(\,\cdot\,,\omega) \ge h\ \text{on\ $[z,z+L]$ for some $z\in\mathbb{Q}$} ) = 1, \]
	and \eqref{eq:longhill} follows from stationarity (and the countability of $\mathbb{Q}$). This proves part (b).
\end{proof}

In general, \eqref{eq:longhill} is not equivalent to the scaled hill condition. In fact, the latter can hold while the former fails in a remarkable way. To illustrate this, we introduce yet another condition:
\begin{equation}\label{eq:singularhill}
\begin{aligned}
&\text{for every $c\in(0,1)$ and $\mathbb{P}$-a.e.\ $\omega$, there is a $z \in \mathbb{R}$ such that}\\
&a(z,\omega) \le c\ \text{and}\ V(z,\omega) \ge 1-c.
\end{aligned}
\end{equation}

\begin{proposition}
	If $a(\,\cdot\,,\omega)\in\Lip(\mathbb{R})$ and $V(\,\cdot\,,\omega)\in\UC(\mathbb{R})$ for every $\omega\in\Omega$, then \eqref{eq:singularhill} implies the scaled hill condition.
\end{proposition}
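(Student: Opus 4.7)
My plan is to exploit the fact that, near a point where $a$ is tiny and $V$ is nearly $1$, the integrand $1/a$ becomes large while $V$ stays above $h$ on a whole neighborhood thanks to the uniform continuity of $V(\,\cdot\,,\omega)$ and the Lipschitz continuity of $a(\,\cdot\,,\omega)$. The point is that making $c$ small both forces $a(z,\omega)$ to be small (so $1/a$ is large locally) and forces $V(z,\omega)$ to be close to $1$ (so we have room for $V$ to drop a bit and still exceed $h$).

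Fix $h\in(0,1)$ and $C>0$. Work with any $\omega$ in the $\mathbb{P}$-full-measure set on which \eqref{eq:singularhill} holds for every $c\in(0,1)\cap\mathbb{Q}$, and let $L_a = L_a(\omega)$ denote a Lipschitz constant of $a(\,\cdot\,,\omega)$ and $m_V = m_V(\,\cdot\,,\omega)$ a modulus of continuity of $V(\,\cdot\,,\omega)$. First I would choose $r = r(\omega)>0$ so small that $m_V(r) \le (1-h)/2$, then choose $c \in (0,(1-h)/2)\cap\mathbb{Q}$ so small that
\[ \frac{2}{L_a}\,\log\!\left(1 + \frac{L_a r}{c}\right) \ge C \]
(if $L_a = 0$ one simply uses $2r/c \ge C$ instead; the divergence as $c\to 0^+$ is what matters). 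By \eqref{eq:singularhill} there is a point $z = z(\omega)$ with $a(z,\omega) \le c$ and $V(z,\omega) \ge 1 - c$.

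I would then set $L_1 = z - r$, $L_2 = z + r$, and verify the two required properties. For the potential, for every $x\in[L_1,L_2]$,
\[ V(x,\omega) \ge V(z,\omega) - m_V(|x-z|) \ge 1 - c - m_V(r) \ge 1 - \frac{1-h}{2} - \frac{1-h}{2} = h. \]
For the integral, Lipschitz continuity of $a$ gives $a(y,\omega) \le c + L_a|y-z|$ on $[L_1,L_2]$, so
\[ \int_{L_1}^{L_2}\frac{dy}{a(y,\omega)} \ge \int_{-r}^{r}\frac{dt}{c + L_a|t|} = \frac{2}{L_a}\,\log\!\left(1 + \frac{L_a r}{c}\right) \ge C, \]
as desired. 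This yields the scaled hill condition.

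The only subtle point is the $\omega$-dependence of $L_a$ and $m_V$: since the scaled hill condition is required only for $\mathbb{P}$-a.e.\ $\omega$ and allows $[L_1,L_2]$ (hence $r$, $c$, $z$) to depend on $\omega$, this causes no trouble, and taking a countable intersection over $c\in(0,1)\cap\mathbb{Q}$ keeps the exceptional set $\mathbb{P}$-null. I do not expect a serious obstacle; the main thing to be careful about is quantifying the trade-off between $c$ and $r$ so that $1/a$ integrates to something divergent in $c$ while $V$ simultaneously remains above $h$ on $[z-r,z+r]$.
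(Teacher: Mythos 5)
Your proof is correct and follows essentially the same route as the paper: find $z$ with $a(z,\omega)\le c$ and $V(z,\omega)\ge 1-c$ via \eqref{eq:singularhill}, use the Lipschitz bound $a(y,\omega)\le c+L_a|y-z|$ to make $\int 1/a$ blow up logarithmically as $c\to 0$, and use uniform continuity of $V$ to keep $V\ge h$ on the fixed neighborhood $[z-r,z+r]$. The only cosmetic difference is the order of quantifier choices (you fix $r$ from the modulus of continuity and then shrink $c$, whereas the paper fixes a margin $\epsilon$ first), and you explicitly handle the $L_a=0$ degenerate case; the underlying estimate and the reduction to rational $c$ to control the exceptional null set are identical.
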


\begin{proof}
	Suppose \eqref{eq:singularhill} holds. Fix any $h\in(0,1)$ and $\epsilon\in(0,1-h)$. There is an $\Omega_0\in\mathcal{F}$ with $\mathbb{P}(\Omega_0) = 1$ such that, for every $\omega\in\Omega_0$ and $c\in(0,1 - h - \epsilon)$, there is a $z = z(\omega)\in\mathbb{R}$ such that
	$a(z,\omega) \le c$ and $V(z,\omega) \ge 1 - c > h + \epsilon$ (since it suffices to consider $c\in(0,1 -h - \epsilon)\cap\mathbb{Q}$). If $a(\,\cdot\,,\omega)\in\Lip(\mathbb{R})$ and $V(\,\cdot\,,\omega)\in\UC(\mathbb{R})$, then there exist $K = K(\omega) > 0$ and $\delta = \delta(\omega) > 0$ such that $a(x,\omega) \le c + K|x-z|$ for every $x\in\mathbb{R}$ and $V(\,\cdot\,,\omega) \ge h$ on $[z-\delta,z+\delta]$. Note that
	\begin{align*}
	s(z+\delta,\omega) - s(z-\delta,\omega) \ge \int_{z-\delta}^{z+\delta}\frac{dx}{c + K|x-z|} = 2\int_0^\delta\frac{dy}{c + Ky} = \frac{2}{K(\omega)}\log\left(\frac{c + K(\omega)\delta(\omega)}{c}\right).
	\end{align*}
	Since $c\in(0,1 - h - \epsilon)$ is arbitrary, we conclude that the scaled hill condition holds.
\end{proof}

The hill condition \eqref{eq:longhill} and an analogous valley condition (obtained by replacing $V$ with $1-V$) were initially formulated in \cite{YZ19} for potentials $V:\mathbb{Z}\times\Omega\to[0,1]$ in the context of a homogenization problem for controlled random walks. These hill and valley conditions were subsequently adapted in \cite{KYZ20} to our setting and utilized to prove that \eqref{eq:asilhuzur} homogenizes when $G$ is not quasiconvex but given by $G(p) = \frac1{2}\min\{(p-c)^2,(p+c)^2\}$ for some $c>0$. This is the continuous version of the main result in \cite{YZ19} and it has recently been generalized in \cite{DK20+} to the case where $G$ is the minimum of two or more convex functions with the same absolute minimum. Note that there was no need to introduce scaled hill and valley conditions in \cite{YZ19,KYZ20} because they assume that $a \equiv \frac1{2}$. In contrast, the latest version of \cite{DK20+} adopts such scaled conditions that originate from \eqref{eq:scaledhill} but are defined slightly differently to cover possibly degenerate diffusion coefficients.

In the discrete setting, with our other assumptions in place, the hill condition \eqref{eq:longhill} is satisfied when the law of $(V(x,\omega))_{x\in\mathbb{Z}}$ under $\mathbb{P}$ is a product measure, and more generally when the law of $(V(x,\omega))_{0 \le x \le L}$ under $\mathbb{P}$ is mutually absolutely continuous with the product measure formed by its marginals for every $L>0$ (see \cite[Example 1.2]{YZ19}). We can extend such potentials from $\mathbb{Z}$ to $\mathbb{R}$ by linear interpolation, make a change of variable that maps $\mathbb{Z}$ to a suitable stationary point process, perform a mollification if necessary, and thereby obtain stationary potentials that satisfy \eqref{eq:longhill} (and hence the scaled hill condition) as well as any desired mixing (including finite-range dependence) or regularity condition (see \cite[Example B.1]{DK20+}). Moreover, a variant of this construction yields potentials that satisfy \eqref{eq:longhill} but are not even weakly mixing (see \cite[Example 1.3]{YZ19}).

It is also easy to construct stationary potentials that satisfy \eqref{eq:longhill} without starting from the discrete setting, e.g., by taking moving averages of truncated increments of a two-sided Brownian motion or Poisson process (see \cite[Example 1.3]{KYZ20}) or by considering two-sided Brownian motion that is confined to $[0,1]$ under reflecting boundary conditions and then mollified appropriately (see \cite[Example B.3]{DK20+}). In fact, for any stationary potential $V:\mathbb{R}\times\Omega\to[0,1]$, the hill condition \eqref{eq:longhill} holds unless $x\mapsto V(x,\omega)$ is almost surely rigid in the sense that it cannot stay arbitrarily close to a given height for arbitrarily long. From the perspective of stationary \& ergodic processes, it can be argued that such rigid potentials are not typical (see \cite[Section B.3]{DK20+}).

The scaled hill condition fails most notably when $x\mapsto (a(x,\omega),V(x,\omega))$ is periodic (which is the prime example of rigidity in the above sense). However, in that case, homogenization follows from compactness arguments that prove the existence of a periodic (and hence bounded) corrector for every direction (see Subsection \ref{sub:literature} and the references therein).

\section{A variant of the Gr\"onwall-Bellman lemma}\label{app:gronwall}

\begin{lemma}\label{lem:gronwall}
	Given any $K>0$ and $L_1,L_2\in\mathbb{R}$ such that $L_1 < L_2$, suppose
	\begin{align*}
	&\text{$a:[L_1,L_2] \to (0,+\infty)$ is in $\Con([L_1,L_2])$,}\\
	&\text{$h:[L_1,L_2] \to [0,K]$ is in $\Con^1([L_1,L_2])$ and $0 < h(L_1) \le K$, and}\\
	&\text{$m:[0,K] \to [0,+\infty)$ is in $\Con([0,K])$, $m(0) = 0$ and $0 < m(q) \le q$ for every $q\in(0,K]$.}
	\end{align*}
	If \[ a(x)h'(x) + m(h(x)) \le 0 \] for every $x\in(L_1,L_2)$, then
	\begin{equation}\label{eq:tostcu}
	h(x) \le \Phi^{-1}\left(\int_{L_1}^x\frac{dy}{a(y)}\right)
	\end{equation}
	for every $x\in(L_1,L_2)$, where $\Phi:(0,K] \to (0,+\infty)$ is defined by
	\[ \Phi(p) = \int_p^K\frac{dq}{m(q)} \]
	and its inverse satisfies
	\begin{equation}\label{eq:dudukiki}
	\lim_{z\to+\infty} \Phi^{-1}(z) = 0.
	\end{equation}
\end{lemma}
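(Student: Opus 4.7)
The plan is to do an elementary separation-of-variables estimate, taking some care with the possibility that $h$ vanishes. First I would verify the easy tail statement \eqref{eq:dudukiki}: the hypothesis $0 < m(q) \le q$ on $(0,K]$ gives the pointwise bound $1/m(q) \ge 1/q$, hence
\[ \Phi(p) = \int_p^K\frac{dq}{m(q)} \ge \int_p^K\frac{dq}{q} = \log(K/p) \xrightarrow[p\to0^+]{} +\infty. \]
Since $\Phi(K)=0$ and $\Phi'(p) = -1/m(p) < 0$ on $(0,K]$, $\Phi$ is a continuous, strictly decreasing bijection from $(0,K]$ onto $[0,+\infty)$; its inverse $\Phi^{-1}:[0,+\infty)\to(0,K]$ is therefore also strictly decreasing, with $\Phi^{-1}(z)\to 0$ as $z\to+\infty$.

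Next I would analyze the sign of $h$. Let $I = \{x\in[L_1,L_2]:h(x) > 0\}$; by hypothesis $L_1\in I$. The claim is that either $I = [L_1,L_2]$, or there exists $x_0\in(L_1,L_2]$ such that $I\cap[L_1,L_2] = [L_1,x_0)$ and $h\equiv 0$ on $[x_0,L_2]$. To see this, suppose $h(x_\ast)>0$ for some $x_\ast$ and pick any $x'\in[L_1,x_\ast]$. If $h(x')=0$, then $x_1:=\sup\{x\in[x',x_\ast]:h(x)=0\} < x_\ast$, $h(x_1)=0$, and $h>0$ on $(x_1,x_\ast]$; but then on $(x_1,x_\ast]$ the differential inequality forces $h'(x)\le -m(h(x))/a(x) < 0$, so $h$ is strictly decreasing there, contradicting $h(x_1^+)=0 < h(x_\ast)$. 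Hence $h>0$ on the entire segment $[L_1,x_\ast]$, which proves the dichotomy.

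On any interval where $h>0$, the quantity $m(h(x))$ is strictly positive and the differential inequality can be rewritten as
\[ \frac{d}{dx}\Phi(h(x)) = -\frac{h'(x)}{m(h(x))} \ge \frac1{a(x)}. \]
Integrating from $L_1$ to $x$ and using $\Phi(h(L_1)) \ge 0$, I get
\[ \Phi(h(x)) \ge \Phi(h(L_1)) + \int_{L_1}^x\frac{dy}{a(y)} \ge \int_{L_1}^x\frac{dy}{a(y)}, \]
and applying the decreasing map $\Phi^{-1}$ yields the desired inequality \eqref{eq:tostcu} on that interval. In the dichotomy above, this covers $[L_1,L_2)$ in the first case and $[L_1,x_0)$ in the second; for $x\in[x_0,L_2)$ in the second case the bound is trivial since $h(x)=0$ while $\Phi^{-1}(\,\cdot\,)$ is positive.

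The only mildly delicate point is the zero-set argument in the second paragraph; the rest is bookkeeping. Once that dichotomy is in hand, the separation-of-variables step is routine and the proof is complete.
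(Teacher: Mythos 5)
Your proof is correct and follows the same separation-of-variables route as the paper: both hinge on the chain-rule identity $\frac{d}{dx}\Phi(h(x)) = -h'(x)/m(h(x)) \ge 1/a(x)$ and integrate from $L_1$ to $x$. The only cosmetic difference is that the paper packages the constant of integration into an auxiliary $\Psi(p) = \int_p^{h(L_1)} dq/m(q)$ so that $\Psi(h(L_1)) = 0$ exactly, whereas you keep $\Phi$ and simply discard $\Phi(h(L_1)) \ge 0$. What you add beyond the paper is the zero-set dichotomy in your second paragraph: the paper's chain-rule step tacitly assumes $h > 0$ on $[L_1,x]$ (otherwise $m(h(x))$ vanishes in the denominator) and leaves this to the reader, while you show that $h$ is either positive on all of $[L_1,L_2)$ or becomes and stays identically zero past some $x_0$, and you correctly note the bound is trivial on $\{h=0\}$ since $\Phi^{-1}$ is strictly positive. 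This extra care is sound and welcome, though not essential in the paper's application (in Lemma \ref{lem:kiriki} the same fact also follows from ODE uniqueness).
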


\begin{proof}
	For every $p \in (0,K]$, let
	\[ \Psi(p) = \int_p^{h(L_1)}\frac{dq}{m(q)}. \]
	Note that $\Psi(p) \le \Phi(p)$ because $h(L_1) \le K$. By the chain rule,
	\[ \frac{d}{dx}\Psi(h(x)) = -\frac{h'(x)}{m(h(x))} \ge \frac1{a(x)}. \]
	Integrating both sides and using $\Psi(h(L_1)) = 0$, we get
	\[ \Phi(h(x)) \ge \Psi(h(x)) \ge \int_{L_1}^x\frac{dy}{a(y)}. \]
	Since $\Phi$ is strictly decreasing, \eqref{eq:tostcu} holds. Finally, \eqref{eq:dudukiki} follows from
	the observation that
	\[ \lim_{p\downarrow0}\Phi(p) = \int_0^K\frac{dq}{m(q)} \ge \int_0^K\frac{dq}{q} = +\infty. \qedhere \]	
\end{proof}

\bibliographystyle{abbrv}
\bibliography{1d_1well_viscous_ref}

\begin{thebibliography}{10}

\bibitem{AC18}
S.~N. Armstrong and P.~Cardaliaguet.
\newblock Stochastic homogenization of quasilinear {H}amilton-{J}acobi
  equations and geometric motions.
\newblock {\em J. Eur. Math. Soc. (JEMS)}, 20(4):797--864, 2018.

\bibitem{AS13}
S.~N. Armstrong and P.~E. Souganidis.
\newblock Stochastic homogenization of level-set convex {H}amilton-{J}acobi
  equations.
\newblock {\em Int. Math. Res. Not. IMRN}, (15):3420--3449, 2013.

\bibitem{AT14}
S.~N. Armstrong and H.~V. Tran.
\newblock Stochastic homogenization of viscous {H}amilton-{J}acobi equations
  and applications.
\newblock {\em Anal. PDE}, 7(8):1969--2007, 2014.

\bibitem{ATY15}
S.~N. Armstrong, H.~V. Tran, and Y.~Yu.
\newblock Stochastic homogenization of a nonconvex {H}amilton-{J}acobi
  equation.
\newblock {\em Calc. Var. Partial Differential Equations}, 54(2):1507--1524,
  2015.

\bibitem{ATY16}
S.~N. Armstrong, H.~V. Tran, and Y.~Yu.
\newblock Stochastic homogenization of nonconvex {H}amilton-{J}acobi equations
  in one space dimension.
\newblock {\em J. Differential Equations}, 261(5):2702--2737, 2016.

\bibitem{CS17}
P.~Cardaliaguet and P.~E. Souganidis.
\newblock On the existence of correctors for the stochastic homogenization of
  viscous {H}amilton-{J}acobi equations.
\newblock {\em C. R. Math. Acad. Sci. Paris}, 355(7):786--794, 2017.

\bibitem{CIL92}
M.~G. Crandall, H.~Ishii, and P.-L. Lions.
\newblock User's guide to viscosity solutions of second order partial
  differential equations.
\newblock {\em Bull. Amer. Math. Soc. (N.S.)}, 27(1):1--67, 1992.

\bibitem{D19}
A.~Davini.
\newblock Existence and uniqueness of solutions to parabolic equations with
  superlinear {H}amiltonians.
\newblock {\em Commun. Contemp. Math.}, 21(1):1750098, 25, 2019.

\bibitem{DK17}
A.~Davini and E.~Kosygina.
\newblock Homogenization of viscous and non-viscous {HJ} equations: a remark
  and an application.
\newblock {\em Calc. Var. Partial Differential Equations}, 56(4):Paper No. 95,
  21, 2017.

\bibitem{DK20+}
A.~Davini and E.~Kosygina.
\newblock Stochastic homogenization of a class of nonconvex viscous {HJ}
  equations in one space dimension, 2020.
\newblock Eprint arXiv:math.AP/2002.02263.

\bibitem{DS09}
A.~Davini and A.~Siconolfi.
\newblock Exact and approximate correctors for stochastic {H}amiltonians: the
  1-dimensional case.
\newblock {\em Math. Ann.}, 345(4):749--782, 2009.

\bibitem{E89}
L.~C. Evans.
\newblock The perturbed test function method for viscosity solutions of
  nonlinear {PDE}.
\newblock {\em Proc. Roy. Soc. Edinburgh Sect. A}, 111(3-4):359--375, 1989.

\bibitem{E92}
L.~C. Evans.
\newblock Periodic homogenisation of certain fully nonlinear partial
  differential equations.
\newblock {\em Proc. Roy. Soc. Edinburgh Sect. A}, 120(3-4):245--265, 1992.

\bibitem{FeFZ19+}
W.~M. Feldman, J.-B. Fermanian, and B.~Ziliotto.
\newblock An example of failure of stochastic homogenization for viscous
  {H}amilton-{J}acobi equations without convexity, 2019.
\newblock Eprint arXiv:math.AP/1905.07295.

\bibitem{FeS17}
W.~M. Feldman and P.~E. Souganidis.
\newblock Homogenization and non-homogenization of certain non-convex
  {H}amilton-{J}acobi equations.
\newblock {\em J. Math. Pures Appl. (9)}, 108(5):751--782, 2017.

\bibitem{FleSon06}
W.~H. Fleming and H.~M. Soner.
\newblock {\em Controlled {M}arkov processes and viscosity solutions},
  volume~25 of {\em Stochastic Modelling and Applied Probability}.
\newblock Springer, New York, second edition, 2006.

\bibitem{G16}
H.~Gao.
\newblock Random homogenization of coercive {H}amilton-{J}acobi equations in
  1d.
\newblock {\em Calc. Var. Partial Differential Equations}, 55(2):Paper No. 30,
  39, 2016.

\bibitem{G19}
H.~Gao.
\newblock Stochastic homogenization of certain nonconvex {H}amilton-{J}acobi
  equations.
\newblock {\em J. Differential Equations}, 267(5):2918--2949, 2019.

\bibitem{K07}
E.~Kosygina.
\newblock Homogenization of stochastic {H}amilton-{J}acobi equations: brief
  review of methods and applications.
\newblock In {\em Stochastic analysis and partial differential equations},
  volume 429 of {\em Contemp. Math.}, pages 189--204. Amer. Math. Soc.,
  Providence, RI, 2007.

\bibitem{KRV06}
E.~Kosygina, F.~Rezakhanlou, and S.~R.~S. Varadhan.
\newblock Stochastic homogenization of {H}amilton-{J}acobi-{B}ellman equations.
\newblock {\em Comm. Pure Appl. Math.}, 59(10):1489--1521, 2006.

\bibitem{KYZ20}
E.~Kosygina, A.~Yilmaz, and O.~Zeitouni.
\newblock Homogenization of a class of one-dimensional nonconvex viscous
  {H}amilton-{J}acobi equations with random potential.
\newblock {\em Comm. Partial Differential Equations}, 45(1):32--56, 2020.

\bibitem{LPV87}
P.-L. Lions, G.~Papanicolaou, and S.~R.~S. Varadhan.
\newblock Homogenization of {H}amilton-{J}acobi equations.
\newblock Unpublished manuscript (available at
  www.researchgate.net/publication/246383838), 1987.

\bibitem{LS03}
P.-L. Lions and P.~E. Souganidis.
\newblock Correctors for the homogenization of {H}amilton-{J}acobi equations in
  the stationary ergodic setting.
\newblock {\em Comm. Pure Appl. Math.}, 56(10):1501--1524, 2003.

\bibitem{LS05}
P.-L. Lions and P.~E. Souganidis.
\newblock Homogenization of ``viscous'' {H}amilton-{J}acobi equations in
  stationary ergodic media.
\newblock {\em Comm. Partial Differential Equations}, 30(1-3):335--375, 2005.

\bibitem{QTY18}
J.~Qian, H.~V. Tran, and Y.~Yu.
\newblock Min--max formulas and other properties of certain classes of
  nonconvex effective {H}amiltonians.
\newblock {\em Math. Ann.}, 372(1-2):91--123, 2018.

\bibitem{RT00}
F.~Rezakhanlou and J.~E. Tarver.
\newblock Homogenization for stochastic {H}amilton-{J}acobi equations.
\newblock {\em Arch. Ration. Mech. Anal.}, 151(4):277--309, 2000.

\bibitem{S99}
P.~E. Souganidis.
\newblock Stochastic homogenization of {H}amilton-{J}acobi equations and some
  applications.
\newblock {\em Asymptot. Anal.}, 20(1):1--11, 1999.

\bibitem{Y20+}
A.~Yilmaz.
\newblock Stochastic homogenization and effective {H}amiltonians of {HJ}
  equations in one space dimension: the double-well case, 2020.
\newblock Eprint arXiv:math.AP/2007.07854.

\bibitem{YZ19}
A.~Yilmaz and O.~Zeitouni.
\newblock Nonconvex homogenization for one-dimensional controlled random walks
  in random potential.
\newblock {\em Ann. Appl. Probab.}, 29(1):36--88, 2019.

\bibitem{Z17}
B.~Ziliotto.
\newblock Stochastic homogenization of nonconvex {H}amilton-{J}acobi equations:
  a counterexample.
\newblock {\em Comm. Pure Appl. Math.}, 70(9):1798--1809, 2017.

\end{thebibliography}

\end{document}